\numberwithin{equation}{section}
\renewcommand\appendix{\par
    \setcounter{section}{0}
    \gdef\thesection{Appendix~ \Alph{section}}
    \setcounter{equation}{0}
     \renewcommand\theequation{A.~\arabic{equation}}} 
\newcommand{\PP}{\mathbb{P}}
\newcommand{\RR}{\mathbb{R}}
\theoremstyle{definition}
\newtheorem{theorem}{Theorem}[section]
\newtheorem{lemma}[theorem]{Lemma}
\newtheorem{condition}[theorem]{Condition}
\newtheorem{definition}[theorem]{Definition}
\newtheorem{example}{Example}
\newtheorem{assumption}[theorem]{Assumption}
\newtheorem{remark}[theorem]{Remark}
\title{Strong convergence in the infinite horizon of numerical methods for stochastic delay differential equations}
\author{
 Yudong Wang and Hongjiong Tian \\
 Department of Mathematics, Shanghai Normal University, Shanghai, 200234, China\\
\texttt{  wangyudong\_edu@163.com; hjtian@shnu.edu.cn}
}
\begin{document}
\maketitle
\begin{abstract}
    In this work, we present a general technique for establishing the strong convergence of numerical methods for stochastic delay differential equations (SDDEs) in the infinite horizon. This technique can also be extended to analyze certain continuous function-valued segment processes associated with the numerical methods, facilitating the numerical approximation of invariant measures of SDDEs. To illustrate the application of these results, we specifically investigate the backward and truncated Euler-Maruyama methods. Several numerical experiments are provided to demonstrate the theoretical results.
\end{abstract}

\section{Introduction}
Due to the widespread occurrence of time delays and environmental noises in real-world systems, stochastic delay differential equations (SDDEs) have emerged as a crucial mathematical model in the realms of science and engineering in recent decades \cite{AHMG2007,  CGMP2019, LM2006, M2007, M1974}. However, obtaining explicit solutions for SDDEs is challenging, leading to an increasing focus on numerical approximations in recent years.

For numerical methods, a critical focus lies in studying their strong convergence. Many good results have been obtained, including but not limited to: the classic Euler-Maruyama method \cite{Buckwar2000, MS2003}, the backward Euler-Maruyama method \cite{LCF2004,Zhou2015}, the truncated Euler-Maruyama method \cite{GMY2018, SHGL2022}, the tamed Euler method \cite{JY2017}, etc. However, due to the presence of time delay, the solutions of SDDEs are not Markov processes, and therefore, properties based on Markov processes do not apply. At this point, our focus should shift to the segment process associated with the solution — a continuous function-valued Markov process \cite{M1986}. Therefore,  it is sometimes necessary to analyze the convergence of the segment process associated with the numerical method \cite{BSY-2023, LMS2023}.

More specifically, for the solution of SDDE, $\{x(t)\}_{t\geq -\tau}$, and its corresponding continuous numerical solution, $\{X(t)\}_{t \geq -\tau}$, let $x_t:=\{x(t+\theta): \theta \in [-\tau, 0]\}$ and $X_t:=\{X(t+\theta): \theta \in [-\tau, 0]\}$ represent the segment processes associated with these solutions. The classical study of strong convergence aims to find some upper bounds for the difference between $x(t)$ and $X(t)$, or between $x_t$ and $X_t$, i.e., for some positive constants $T, p$ and $q$
\begin{align*}
    \sup \limits_{0 \leq t \leq T}\mathbb{E}|x(t)-X(t)|^p \leq C_T \Delta^q  \quad\text{or}\quad \sup \limits_{0 \leq t \leq T}\mathbb{E}\|x_t-X_t\|^p \leq C_T \Delta^q,
\end{align*}
where $\Delta$ is the step size and $C_T$ is a constant dependent on $T$. In general, due to the application of tools like the Gronwall lemma, the typical form of $C_T$ is an exponential function related to time, such as $C_T=ce^{cT}$ for some positive constant $c$ \cite{LCF2004, SHGL2022}, which implies that $C_T$ is a function that grows over time. 

However, this poses some problems: one issue is that when studying the long-term behavior of the system \cite{ CKBW2015,DNY2016,LM2009}, it is obvious that $C_T$ will grow large as $T \to \infty$. Consequently, to ensure that the upper bound of the convergence error remains within the tolerable range, it is theoretically necessary to adjust the step size $\Delta$ to be correspondingly very small. With a large time span and a small step size, this clearly makes the computation expensive or even unacceptable. The second is dealing with certain issues that are difficult to determine within a fixed time interval, such as the timing of investment opportunities based on favorable market conditions \cite{DGM2021,GA1997}, presents challenges since these opportunities may arise at any moment. Therefore, any assumption that presupposes a specific time interval, including conducting convergence analysis within a finite time, is not appropriate.

In actual numerical simulations, we find that for some SDDEs and numerical methods, the actual error does not always increase with time after fixing the appropriate step size. Instead, it appears that the error has a time-independent upper bound but depends on the step size.  This intrigues us and leads to the following questions: 1) Does such a time-independent upper bound on the error truly exist, and if so, what is its concrete form?  2) Which types of SDDEs and numerical methods exhibit such an error upper bound?

Thus, unlike most existing results, this paper is devoted to  the study of the strong convergence of numerical methods in infinite horizon, that is, to try to obtain a constant $C$ independent of $T$, such that
\begin{align}\label{UIT}
    \sup \limits_{t \geq 0}\mathbb{E}|x(t)-X(t)|^p \leq C \Delta^q  \quad\text{or}\quad \sup \limits_{t \geq 0 }\mathbb{E}\|x_t-X_t\|^p \leq C \Delta^q.
\end{align}
If \eqref{UIT} holds, One of the simplest and most straightforward applications is that we no longer need to adjust the step size to meet the specified threshold accuracy when numerically simulating the behavior of the system over a long period of time. In this paper, we introduce a technique that can be employed to establish strong convergence in the infinite horizon. More importantly, the techniques we present are general and can be extended to many one-step methods beyond the backward Euler-Maruyama (BEM) and truncated Euler-Maruyama (TEM) methods mentioned in this article. Concurrently, we note that progress has been made in this direction by Crisan et al. \cite{ACO2023, CDO2021}, who have also developed a method for analyzing the convergence of various numerical methods in the infinite horizon. However, unlike theirs, our focus is on the strong convergence of stochastic differential delay equations in infinite horizons, rather than on weak convergence.

Now, let's informally outline this technique. Firstly, we partition the infinite time interval into a countably finite time intervals of length $T$. In each finite time interval, we construct a new true solution starting from the numerical solution, so that the original error can be divided into two parts for analysis. Then with the help of four preconditions, we find that one part is decaying, while the other part is always bounded. Consequently, even if the time tends to infinity, the error between the original numerical solution and the true solution does not explode but instead has an upper bound, which is positively related to the step size. It's worth emphasizing that the aforementioned four preconditions have been extensively studied\cite{  GLL2023, LMS2023,WWM2019, YZM2003}. This implies a wealth of available results for establishing the strong convergence of various numerical methods in the infinite horizon. In addition,
a series of works by Mao et al (see e.g. \cite{HMS2003,Mao2007, Mao2015}) must be thanked for inspiring this paper.

Now, by applying our new results, we can answer our previous two questions to some extent.  Specifically, the SDDEs and the numerical methods that satisfy our four preconditions exhibit an error upper bound that is independent of time and can be quantified.

This paper is organized as follows: In Section 2, we present three main results, namely, a general technique that can be used to establish the strong convergence of numerical solutions or their corresponding segment processes, and the usefulness of this technique in numerical approximations to invariant measures of the true solution.
In Sections 3 and 4, to illustrate the application of this technique, we analyze the strong convergences of the numerical solution of the truncated Euler-Maruyama (TEM) method and the segment process associated with the backward Euler-Maruyama (BEM) method in the infinite horizon, respectively. The numerical experiments are presented in Section 5.

\section{Mathematical Preliminaries}
At the beginning of this section, let's introduce some necessary notations. 
Denote by $|\cdot|$ the Euclidean norm in $\RR^d$ and the trace
norm  in $\RR^{d\times m}$, and by $\langle \cdot,\cdot\rangle$ the inner product in $\RR^{d}$.
For real numbers $a$ and $b$,  let $a\vee{}b=\max\{a,b\}$ and $a\wedge{}b=\min\{a,b\}$, respectively.
Let $\lfloor a\rfloor$ be the integer part of the real
number $a$. Let  $\boldsymbol{1}_A(x)$ be the indicator function of the set $A$.
 Let $\mathbb{R}_{+}=[0,\infty)$. Denote by { $C:=C([-\tau,~0];~\mathbb{R}^{d})$}  the  family of continuous functions $X$ from
$[-\tau,~0]$ to $\mathbb{R}^{d}$ with the supremum norm $\|X\|
=\sup_{-\tau\leq\theta\leq0}| X(\theta)|$. 
For $p\geq2$,  denote by $\mathcal C_{\mathcal F_0}^{p}:=\mathcal C_{\mathcal F_0}^{p}([-\tau,~0];~ \RR^d)$  the  family of all $\mathcal F_0$-measurable bounded $C$-valued random variables $\xi= \{\xi(\theta): -\tau \leq \theta \leq 0\}$.

This paper focuses on the scalar SDDE of the form
\begin{align}\label{SDDE}
dx(t)=f(x(t),x(t-\tau))dt +g(x(t),x(t-\tau))dW(t),~~~t> 0,
\end{align}
with the initial data $\xi=\{\xi(\theta)$: $\theta\in [-\tau, 0]\} \in C$,
where  $\tau>0$,  $f:  \RR^d \times \RR^d \rightarrow \RR^{d}$ and $g: \RR^d\times \RR^d  \rightarrow \RR^{d\times m}$ are Borel measurable, $W(t)$ is an m-dimensional Brownian motion on a probability space  $( \Omega,~\mathcal{F}, ~\PP )$ with a right-continuous complete filtration $\{{\mathcal{F}}_{t}\}_{t\geq 0}$, and $\xi$ is an $\mathcal F_0$-measurable, continuous function-valued random variable from
$[-\tau,~0]$ to $\mathbb{R}^{d}$.
Let $\{x_{t}\}_{t\geq 0} $ be the  segment process, where $x_{t}(\theta):=x(t+\theta)$ for $\theta\in[-\tau,0]$, and sometimes to emphasize the initial value $\xi$ at $t=0$, we also write the solution of \eqref{SDDE} as $x(t; 0, \xi)$  while the corresponding $x_t(\theta)$ is written as $x_t^{0, \xi}(\theta)$. It must be mentioned that the process $\{x_t\}_{t \geq 0}$ associated with the true solution of $\eqref{SDDE}$ is a homogeneous Markov process; see, for example, Theorem 1.1 in \cite{M1986} or Proposition 3.4 in \cite{RRV2006} for more details. 

For simplicity, for given $T, T_1, T_2, \tau \in \mathbb{R}_{+}$,  we may suppose without loss of generality that $\Delta=\frac{T}{N}=\frac{T_1}{N_1}=\frac{T_2}{N_2}=\frac{\tau}{M}$ with some integers $N$, $N_1$, $N_2$ and $M$, and then let $t_k = k\Delta$ for $k \geq -M$.

Moreover, to explore the invariant measure of the solution, some necessary concepts also need to be introduced. Let $\mathfrak{B}(C)$ be the Borel algebra of $C$ and $\mathcal{P}(C):=\mathcal{P}(C, \mathfrak{B}(C))$ be the family of  probability measures on $(C,\mathfrak{B}(C))$. For any $\xi \in C$ and $B \in \mathfrak{B}(C)$, the transition probabilities for the segment processes $x_t$ with initial value $x_0 =\xi$ is defined as
\begin{align*}
    \PP_t(\xi, B) :=\PP(x_t \in B | x_0 \in \xi) 
\end{align*}
Define metric $d_{\mathbb{L}}$ on $\mathcal{P}(C)$ by
\begin{equation*}
    \mathit{d}_{\mathbb{L}}(\mathbb{P}_1 , \mathbb{P}_2) = \sup \limits_{\mathit{F} \in \mathbb{L}} \left| \int_{C} \mathit{F}(x) \mathbb{P}_1(dx) - \int_{C} \mathit{F}(x)\mathbb{P}_2(dx) \right|,
\end{equation*}
where $\mathbb{L}$ is the test functional space
\begin{align*}
    \mathbb{L} := \Big\{F: C \to \mathbb{R} \Big| |F(x)-F(y)| \leq |x-y| \quad\text{and}\quad \sup \limits_{x\in C} |F(x)| \leq 1 \Big\}
\end{align*}
The weak convergence of probability measures can be illustrated in terms of metric $\mathit{d}_{\mathbb{L}}$ \cite{IW1989}. That is, a sequence of probability measures $\{\mathbb{P}_t\}_{t \geq 0}$ in $\mathcal{P}(C)$ converge weakly to a probability measure $\mathbb{P} \in \mathcal{P}(C)$ if and only if
\begin{equation*}
    \lim \limits_{ k \to \infty} \mathit{d}_{\mathbb{L}}(\mathbb{P}_k, \mathbb{P}) = 0.
\end{equation*}
Then we define the invariant measure by using the concept of weak convergence.

\begin{definition}
 For any initial value $\xi \in \mathbb{R}^d$, the $C$-valued process $\{x_t^{0, \xi}\}_{t \geq 0}$ is said to have a invariant measure $\pi \in \mathcal{P}(C)$ if the transition probability measure ${\mathbb{P}}_t(\xi,\cdot)$ converges weakly to $\pi( \cdot)$ as $t \to \infty$ for every $\xi \in C$, that is
    \begin{equation*}
        \lim \limits_{t \to \infty} \left(\sup \limits_{\mathit{F} \in \mathbb{L}} \left|\mathbb{E}(\mathit{F}(x_t^{0, \xi}))-\mathbb{E}_{\pi}(\mathit{F})\right|\right) = 0,
    \end{equation*}
    where
    \begin{equation*}
        \mathbb{E}_{\pi}(\mathit{F}) = \int_{C} \mathit{F}(y)\pi(dy).
    \end{equation*}
\end{definition}

 \section{Main theorems}
In this section, we will present our three main results. To better reflect the similarities and differences between the strong convergence analysis of the numerical solution itself and the corresponding segment process, we will state the preconditions and theorems first, and put the concrete proof behind. 
The first result is about the strong convergence of the numerical solution in the infinite horizon. We state the preconditions as follows:

\begin{condition}\label{con1}~
     \begin{enumerate}
\item[(i)] There exists a continuous numerical solution, denoted as $\{X(t; 0, \xi)\}_{t \geq -\tau}$, which can be used to approximate the solution of \eqref{SDDE}, and the segment process $\{X_{t_k}^{0, \xi}\}_{k \in \mathbb{N}}$ corresponding to $\{X(t; 0, \xi)\}_{t \geq -\tau}$ is a time-homogeneous Markov chain.

\item[(ii)] The numerical solution is moment bounded the infinite horizon, i.e., for any $t \geq -\tau$, there exists a positive constant $M_{1}$ satisfying
\begin{align*}
    \mathbb{E}\Big(|X(t; 0, \xi)|^r\Big) \leq M_{1}.
\end{align*}
where $r$ is a positive constant.

\item[(iii)] The true solutions of \eqref{SDDE} are asymptotically attractive, i.e., for some $p \in \mathbb{R}_+$, and any compact subset $K \subset C$, any two true solutions with different initial values satisfy
\begin{align*}
    \mathbb{E}\Big(|x(t; 0, \xi) - x(t;0, \eta)|^p\Big) \leq M_{2}\Big(\sup \limits_{-\tau \leq s \leq 0} \mathbb{E}\big(|\xi(s)-\eta(s)|^p\big)\Big)e^{-M_{3}t},
\end{align*}
    where $M_{2}$, $M_{3}$ are a pair positive constants and $\xi,\eta \in K \times K$.

\item[(iii)]  For any $t \in [T_1, T_2]$ and $\xi \in C_{\mathcal F_0}^{2}$, note that $x(t; T_1, \xi)$ is the true solution of Eq. \eqref{SDDE} with initial value $\xi$  at $t=T_1$. Then  the moment of the error between $x(t;T_1,\xi)$ and $X(t;T_1,\xi)$ satisfies
\begin{align*}
  \sup \limits_{T_1 \leq t \leq T_2}  \mathbb{E}\Big(|x(t;T_1,\xi)- X(t;T_1,\xi)|^p\Big) \leq C_{T_2-T_1} \Big(1+ \sup \limits_{ -\tau\leq t \leq 0}\mathbb{E}\big(|\xi(t)|^r\big)\Big) \Delta^{q},
\end{align*}
where $q, T_1, T_2 \in \mathbb{R}$, $T_2 >T_1$, $C_{T_2-T_1}$ is a constant dependent on $T_2-T_1$ and $r$ is consistent with (ii).

\end{enumerate}
\end{condition}

  \begin{remark}
      \begin{enumerate}
\item[(1)] The four conditions in Condition \ref{con1} may seem abstract, but in fact, the first condition holds for almost all one-step numerical methods. The third condition is related to the true solution and is easy to obtain. The fourth condition in Condition \ref{con1} typically only needs to be shown to hold on the time interval $[0.T]$ for any $T\geq \tau$, that is, to show that the classical finite-time strong convergence holds. Then, by the time-homogeneity of the solutions, it can be generalized to any finite time interval with minor modifications. 

The crucial consideration lies in the third condition —— the moment boundedness of the continuous approximation $\{X(t;0,\xi)\}_{t \geq -\tau}$. This condition is used to ensure that the right-hand side of the inequality in the fourth condition is bounded, thus playing an indispensable role in our proof. In some cases, the third condition can be weakened to the moment bounded of the true solution; refer to Theorem \ref{T-22} for details.

\item[(2)]For more details about the asymptotic attraction and moment boundedness of the solutions, please see e.g. \cite{GLL2023,LMYY2018,LMY2019,YM2003, YM2004,YZM2003}.

\end{enumerate}
  \end{remark}

The following theorem is proved in Appendix A.

\begin{theorem}\label{T--2}
 Suppose Condition \ref{con1} holds, then the numerical solution $\{X(t; 0, \xi)\}_{t \geq -\tau}$ converges strongly to the underlying solution of \eqref{SDDE} in the infinite horizon, i.e., for any $t \geq -\tau$
 \begin{displaymath}
       \mathbb{E}\Big(|x(t; 0, \xi)-X(t; 0, \xi)|^p\Big) \leq C \Delta^{q},
 \end{displaymath}
 where $C$ is a positive constant independent of T.
\end{theorem}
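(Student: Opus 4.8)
The plan is to leverage the four ingredients of Condition \ref{con1} --- the time-homogeneous Markov structure (i), the uniform-in-time moment bound (ii), the exponential asymptotic attraction of true solutions, and the one-window finite-time convergence estimate (the last item) --- to convert a finite-time error bound into a geometric recursion across consecutive time windows of a fixed length $T=N\Delta$. For each $k\in\mathbb{N}$ I would introduce the auxiliary true solution $y^{k}(t):=x(t;kT,X_{kT}^{0,\xi})$, $t\ge kT$, that is, the genuine solution of \eqref{SDDE} restarted at time $kT$ from the \emph{numerical} segment $X_{kT}^{0,\xi}$. Using the flow property $x(t;0,\xi)=x(t;kT,x_{kT}^{0,\xi})$ for $t\ge kT$ and $|a+b|^p\le 2^{p-1}(|a|^p+|b|^p)$, I would split, for $t\ge kT$,
\begin{align*}
\mathbb{E}|x(t;0,\xi)-X(t;0,\xi)|^p \le 2^{p-1}\Big(\mathbb{E}|x(t;0,\xi)-y^{k}(t)|^p + \mathbb{E}|y^{k}(t)-X(t;0,\xi)|^p\Big).
\end{align*}
The first term is the distance between two true solutions restarted at $kT$ from $x_{kT}^{0,\xi}$ and $X_{kT}^{0,\xi}$, hence controlled by asymptotic attraction; the second is a single-window finite-time error whose restart datum $X_{kT}^{0,\xi}$ has $r$-th moment bounded by $M_1$ uniformly in $k$ by (ii), hence controlled by the last item of Condition \ref{con1}.

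Writing $e_k:=\sup_{-\tau\le s\le 0}\mathbb{E}|x(kT+s;0,\xi)-X(kT+s;0,\xi)|^p$ for the segment error up to time $kT$, shifting the attraction estimate from $0$ to $kT$ by time-homogeneity gives $\mathbb{E}|x(t;0,\xi)-y^{k}(t)|^p\le M_2\, e_k\, e^{-M_3(t-kT)}$, while the finite-time item together with (ii) gives $\sup_{kT\le t\le (k+1)T}\mathbb{E}|y^{k}(t)-X(t;0,\xi)|^p\le C_T(1+M_1)\Delta^q$. Evaluating the split on the terminal segment $t\in[(k+1)T-\tau,(k+1)T]$, where $e^{-M_3(t-kT)}\le e^{-M_3(T-\tau)}$, yields the recursion
\begin{align*}
e_{k+1}\le \alpha\, e_k + \beta, \qquad \alpha:=2^{p-1}M_2\,e^{-M_3(T-\tau)},\quad \beta:=2^{p-1}C_T(1+M_1)\Delta^q.
\end{align*}
Since $e^{-M_3(T-\tau)}\to 0$ as $T\to\infty$, I would now \emph{fix} $T$ large enough that $\alpha<1$; because $x$ and $X$ share the initial datum $\xi$, we have $e_0=0$, so iterating gives the uniform bound $e_k\le \beta/(1-\alpha)$ for every $k$.

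To pass from this grid/segment estimate to arbitrary $t$, I would apply the same split once more: for general $t\ge 0$ choose $k$ with $t\in[kT,(k+1)T]$, bound the attraction term by $M_2 e_k$ (using $e^{-M_3(t-kT)}\le 1$) and the finite-time term by $C_T(1+M_1)\Delta^q$, obtaining $\mathbb{E}|x(t;0,\xi)-X(t;0,\xi)|^p\le 2^{p-1}\big(M_2\beta/(1-\alpha)+C_T(1+M_1)\big)\Delta^q=:C\Delta^q$, with $C$ independent of $t$ (the now-fixed $T$ enters only through the constant). On $[-\tau,0]$ the error vanishes, so the bound holds for all $t\ge-\tau$.

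I expect the main obstacle to be the rigorous application of the attraction estimate and of the finite-time item to the \emph{random} restart datum $X_{kT}^{0,\xi}$ rather than to a deterministic initial function. The clean route is to condition on $\mathcal{F}_{kT}$: by the time-homogeneous Markov property (i), given $\mathcal{F}_{kT}$ the processes $x(\cdot;kT,x_{kT}^{0,\xi})$ and $y^{k}$ are true solutions launched from the (now frozen) segments $x_{kT}^{0,\xi}$ and $X_{kT}^{0,\xi}$, so both estimates apply conditionally and may then be integrated out; here the uniform moment bound (ii) is precisely what keeps the right-hand side $C_T\big(1+\sup_s\mathbb{E}|X_{kT}(s)|^r\big)\Delta^q$ bounded by $C_T(1+M_1)\Delta^q$ uniformly in $k$. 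The genuinely delicate point is to ensure the attraction estimate survives conditioning with $M_2,M_3$ uniform over the random but moment-controlled restart segments --- i.e. that the $\sup_s\mathbb{E}$ structure on the right-hand side is preserved rather than degrading to $\mathbb{E}\sup_s$ --- while the remainder is bookkeeping of the geometric series.
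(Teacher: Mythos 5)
Your proposal follows essentially the same route as the paper's proof in Appendix A: restart an auxiliary true solution from the numerical segment at the start of each window of fixed length $T$, split the error into an asymptotic-attraction part and a single-window finite-time part, fix $T$ large enough that the contraction factor absorbs the $2^{p}M_2$ prefactor, and iterate the resulting geometric recursion using the uniform moment bound to control the restart data. The only differences are cosmetic (the paper steps the recursion over windows of length $2T$ and handles the base interval $[0,2T]$ by the finite-time estimate rather than by $e_0=0$), and your remark about conditioning on $\mathcal{F}_{kT}$ to justify applying the attraction and finite-time estimates to random restart segments is a technical point the paper passes over silently.
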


From Theorem \ref{T--2}, We know that under Condition \ref{con1},
the numerical solution is strongly convergent in the infinite horizon. However, as we mentioned earlier, sometimes we need to shift our focus to the segment process associated with the solution due to the delay. Obviously, at this point, Condition \ref{con1} no longer applies to convergence analysis of the numerical segment process. We need to introduce some new preconditions, similar to condition 2.1 but related to the segment processes rather than the solutions themselves. Let us now state them as follows:

\begin{condition}\label{con19} ~
\begin{enumerate}
\item[(i)] There exists a continuous numerical solution, denoted as $\{X(t; 0, \xi)\}_{t \geq -\tau}$, which can be used to approximate the solution of \eqref{SDDE}, and the numerical segment process $\{X_{t_k}^{0, \xi}\}_{k \in \mathbb{N}}$ corresponding to $\{X(t; 0, \xi)\}_{t \geq -\tau}$ is a time-homogeneous Markov process.

\item[(ii)] For any $t \geq 0$, the true segment process $\{x_t^{0, \xi}\}_{t \geq 0}$ is moment bounded in the following form
    \begin{equation*}
         \mathbb{E} \|x_t^{0, \xi} \|^p \leq K_{{1}} \mathbb{E} \|\xi \|^p e^{-K_{{2}} t} + K_{{3}},
    \end{equation*}
where $K_{{1}}, K_{{2}}, K_{{3}}$ are positive constants.

\item[(iii)]The segment processes associated with \eqref{SDDE} are asymptotic attractive, i.e., for any compact subset $K \subset  C$ and some $p \in \mathbb{R}_+$, ~any two segment processes corresponding to the true solution with different initial values satisfy
\begin{equation*}
    \mathbb{E}\Big(\|x_{t}^{0,\xi}-x_t^{0,\eta} \|^p \Big) \leq K_{{4}}\mathbb{E}\Big( \|\xi -\eta\|^p \Big)e^{-K_{{5}} t},
    \end{equation*}
    where $K_{{4}}$, $K_{{5}}$ are a pair positive constants and $\xi,\eta \in K \times K$.

    \item[(iv)]  For any $t_k \in [T_1, T_2]$, the moment of the error between the true and numerical segment processes satisfies
    \begin{align*}
         \sup \limits_{T_1 \leq t_k \leq T_2} \mathbb{E}\Big(\|x_{t_k}^{T_1, X_{T_1}^{0, \xi}}-X_{t_k}^{T_1, X_{T_1}^{0, \xi}}\|^p \Big) \leq C_{T_2-T_1}\Big(1+\sup \limits_{ T_1-\tau\leq t_k \leq T_1}\mathbb{E}\|X_{t_k}^{0,\xi}\|^p\Big)\Delta^q
    \end{align*}
where $q, T_1,T_2\in \mathbb{R}$,$T_1 <T_2$, $C_{T_2-T_1}$ is a constant dependent on $T_2-T_2$, and we set $\sup \limits_{ -\tau\leq t_k \leq 0}\mathbb{E}\|X_{t_k}^{0,\xi}\|^p:=\mathbb{E}\|\xi\|^p$.  

\end{enumerate}
  \end{condition}

    Comparing Conditions \ref{con1} and \ref{con19}, we can see that the first condition does not change, while the third and fourth conditions seem to simply replace the solutions themselves with the corresponding segment processes. The most significant difference lies in the second condition, where the moment bounded is not related to the numerical solution but to the segment process associated with the true solution. However, it should be noted that in Condition \ref{con19}, consistency in the order of moments on both sides of the inequality is required, while in Condition \ref{con1}, inconsistency is permissible. This is because if the order of the moments in the fourth condition is consistent, it can be used to derive the moment bounded of the numerical solution from that of the true solution, for more details please see Lemma \ref{L13}, and in fact this idea also applies to the solution itself. 

This means that there are actually two versions of both conditions \ref{con1} and \ref{con19}. The first version allows differing orders of the moments on either side of the inequality in the fourth condition but requires the boundedness of moments for the numerical solution or its corresponding segment process. The second version requires only that the true solution or its corresponding segment process moments are bounded, yet the fourth condition requires consistency in the order of moments on both sides of the inequality. Thus, we can choose the appropriate version according to our needs to analyze the strong convergence of numerical methods in infinite horizons.

Then, in order to prove the strong convergence of the numerical solution in the infinite horizon, the following lemma needs to be established first, which is also an independent new result.

     \begin{lemma}\label{L13}
    Suppose that the second and fourth conditions in Condition \ref{con19} hold, then for any $k \geq 0$ and $\xi \in \mathcal C_{\mathcal F_0}^{p}$, there exists a positive constant $\Delta^*$ such that for any $\Delta \in (0, \Delta^*)$, the process $\{X_{t_k}^{0,\xi}\}_{k \geq 0}$ satisyes
    \begin{equation*}
         \mathbb{E} \|X_{t_k}^{0,\xi}\|^2 \leq K_6 \mathbb{E} \|\xi \|^2 + K_7,
    \end{equation*}
    where $K_6 $ and $ K_7$ are positive constants .
    \end{lemma}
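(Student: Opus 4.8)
The plan is to bound the numerical segment moment by comparing it, block by block, with a true segment process restarted from the current numerical value, using Condition \ref{con19}(ii) to control the restarted true part and Condition \ref{con19}(iv) to control the error. Fix a block length $T \geq \tau$ that is an integer multiple of $\Delta$ and write $T_1 = nT$, $T_2 = (n+1)T$. Since the scheme is a time-homogeneous one-step Markov process (Condition \ref{con19}(i)), it enjoys the flow property $X_{t_k}^{nT, X_{nT}^{0,\xi}} = X_{t_k}^{0,\xi}$ for $t_k \geq nT$, so for such $t_k$ the elementary inequality $\|a+b\|^p \le 2^{p-1}(\|a\|^p+\|b\|^p)$ gives
\begin{equation*}
\mathbb{E}\|X_{t_k}^{0,\xi}\|^p \le 2^{p-1}\,\mathbb{E}\|x_{t_k}^{nT, X_{nT}^{0,\xi}}\|^p + 2^{p-1}\,\mathbb{E}\|X_{t_k}^{0,\xi} - x_{t_k}^{nT, X_{nT}^{0,\xi}}\|^p .
\end{equation*}
The second term is precisely the quantity estimated in Condition \ref{con19}(iv).

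For the first term I would invoke Condition \ref{con19}(ii) with a random initial datum. Because $X_{nT}^{0,\xi}$ is $\mathcal F_{nT}$-measurable while the future Brownian increments are independent of $\mathcal F_{nT}$, conditioning on $\mathcal F_{nT}$ and using time-homogeneity of the true segment process yields
\begin{equation*}
\mathbb{E}\|x_{t_k}^{nT, X_{nT}^{0,\xi}}\|^p \le K_1\,\mathbb{E}\|X_{nT}^{0,\xi}\|^p\, e^{-K_2(t_k - nT)} + K_3 .
\end{equation*}
Introducing $V_n := \sup_{nT-\tau \le t_k \le nT}\mathbb{E}\|X_{t_k}^{0,\xi}\|^p$ (so that $\mathbb{E}\|X_{nT}^{0,\xi}\|^p \le V_n$ and the lag supremum on the right of Condition \ref{con19}(iv) equals $V_n$), the two bounds combine, for every $t_k\in[nT,(n+1)T]$, into
\begin{equation*}
\mathbb{E}\|X_{t_k}^{0,\xi}\|^p \le 2^{p-1}\big[K_1 V_n e^{-K_2(t_k-nT)} + K_3 + C_T(1+V_n)\Delta^q\big].
\end{equation*}

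Restricting $t_k$ to the terminal window $[(n+1)T-\tau,(n+1)T]$, where $t_k - nT \ge T-\tau$, I obtain the one-step recursion $V_{n+1} \le \theta V_n + \beta$ with $\theta = 2^{p-1}\big(K_1 e^{-K_2(T-\tau)} + C_T\Delta^q\big)$ and $\beta = 2^{p-1}(K_3 + C_T\Delta^q)$. I would first choose $T$ large enough that $2^{p-1}K_1 e^{-K_2(T-\tau)} < \tfrac12$, and only then choose $\Delta^*$ so that $2^{p-1}C_T\Delta^q<\tfrac12$ for all $\Delta\in(0,\Delta^*)$; this forces $\theta<1$. Iterating gives $V_n \le \theta^n V_0 + \beta/(1-\theta) \le V_0 + \beta/(1-\theta)$ for all $n$, and the convention in Condition \ref{con19}(iv) identifies $V_0 = \mathbb{E}\|\xi\|^p$. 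Feeding this uniform bound on $V_n$ back into the displayed estimate for a general $t_k$ (bounding $e^{-K_2(t_k-nT)}\le 1$) produces $\mathbb{E}\|X_{t_k}^{0,\xi}\|^p \le K_6\,\mathbb{E}\|\xi\|^p + K_7$ with $K_6,K_7$ built explicitly from $K_1,K_3,C_T,\beta,\theta$; the case $p=2$ of the statement follows at once.

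The main obstacle is the rigorous application of Condition \ref{con19}(ii) to the random, $\mathcal F_{nT}$-measurable initial value $X_{nT}^{0,\xi}$: this requires the Markov/flow property of the true segment process together with a conditioning argument and enough regularity in the initial data to pass the deterministic-initial bound through the conditional expectation. A secondary point to watch is the order in which the constants are tuned — $T$ must be fixed first to make the decay term small, and only afterwards may $\Delta^*$ be chosen — together with the requirement $T\ge\tau$, which makes the windows $[nT-\tau,nT]$ nest inside the preceding block so that $V_n$ simultaneously controls $\mathbb{E}\|X_{nT}^{0,\xi}\|^p$ and the lag supremum appearing in Condition \ref{con19}(iv).
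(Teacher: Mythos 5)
Your proposal is correct and follows essentially the same route as the paper's Appendix B proof: decompose the numerical segment into a true segment restarted from the numerical value at the block boundary plus the local error, bound these by Conditions \ref{con19}(ii) and (iv) respectively, fix $T$ first so that the decay factor is a strict contraction, then shrink $\Delta^*$, and iterate the resulting recursion with a geometric series. The only differences are cosmetic (your explicit $V_n$ notation and one-block shifts versus the paper's two-block shifts and its concrete choice $T=4\tau+(4\log(2K_1))/K_2$), and your flagged concern about applying (ii) to the random initial datum $X_{nT}^{0,\xi}$ is handled in the paper at the same level of informality via the Markov/flow property.
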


The lemma is proved in Appendix B. Now, we can state the second result of this paper.

\begin{theorem}\label{T-22}
    Under Condition \ref{con19}, for any $k \in \mathbb{N}$ and $\xi \in \mathcal C_{\mathcal F_0}^{p}$, the numerical segment process $\{X_{t_k}^{0,\xi}\}_{k \in \mathbb{N}}$ converges strongly to the true segment process $\{x_{t_k}^{0,\xi}\}_{k \in \mathbb{N}}$ in the infinite horizon, that is, 
        \begin{align*}
        \sup \limits_{k \in \mathbb{N}}  \mathbb{E}\Big(\|x_{t_k}^{0,\xi}-X_{t_k}^{0, \xi}\|^2\Big) \leq C \Delta^q,
    \end{align*}
    where $C$ is a positive constant independent of $T$.
\end{theorem}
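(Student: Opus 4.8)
The plan is to partition the time axis $[0,\infty)$ into consecutive blocks $[nT,(n+1)T]$, $n\in\mathbb{N}$, where the block length $T$ is a multiple of $\Delta$ to be fixed later, and to exploit a ``restart'' of the exact equation from the numerical data at each block endpoint. Fix a step index $k$ and let $n$ be the unique integer with $t_k\in[nT,(n+1)T)$. Using the flow (time-homogeneous Markov) property of the numerical scheme from Condition \ref{con19}(i), we have $X_{t_k}^{0,\xi}=X_{t_k}^{nT,\,X_{nT}^{0,\xi}}$, while the exact segment process similarly satisfies $x_{t_k}^{0,\xi}=x_{t_k}^{nT,\,x_{nT}^{0,\xi}}$. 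Inserting the auxiliary exact solution started at $t=nT$ from the numerical value $X_{nT}^{0,\xi}$ and applying $\|a+b\|^p\le 2^{p-1}(\|a\|^p+\|b\|^p)$ gives
\begin{align*}
\mathbb{E}\big\|x_{t_k}^{0,\xi}-X_{t_k}^{0,\xi}\big\|^p
&\le 2^{p-1}\,\mathbb{E}\big\|x_{t_k}^{nT,\,x_{nT}^{0,\xi}}-x_{t_k}^{nT,\,X_{nT}^{0,\xi}}\big\|^p \\
&\quad +2^{p-1}\,\mathbb{E}\big\|x_{t_k}^{nT,\,X_{nT}^{0,\xi}}-X_{t_k}^{nT,\,X_{nT}^{0,\xi}}\big\|^p .
\end{align*}

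The second (local) term is controlled by Condition \ref{con19}(iv) with $T_1=nT$, $T_2=(n+1)T$ and random initial datum $X_{nT}^{0,\xi}$, giving at most $C_T\big(1+\sup_{nT-\tau\le t_j\le nT}\mathbb{E}\|X_{t_j}^{0,\xi}\|^p\big)\Delta^q$. Here Lemma \ref{L13} is essential: it bounds $\sup_{t_j}\mathbb{E}\|X_{t_j}^{0,\xi}\|^p$ by $K_6\mathbb{E}\|\xi\|^p+K_7$, a constant independent of $n$, so the whole local term is dominated by $C'\Delta^q$ uniformly in $n$. The first term compares two exact solutions launched at $t=nT$ from $x_{nT}^{0,\xi}$ and $X_{nT}^{0,\xi}$; by the time-homogeneous form of the asymptotic attraction Condition \ref{con19}(iii), with elapsed time $t_k-nT$, it is bounded by $K_4\,\mathbb{E}\|x_{nT}^{0,\xi}-X_{nT}^{0,\xi}\|^p e^{-K_5(t_k-nT)}$. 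Thus the error at a general time is controlled by the error at the left block endpoint.

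Writing $a_n:=\mathbb{E}\|x_{nT}^{0,\xi}-X_{nT}^{0,\xi}\|^p$ and applying the same decomposition while restarting the exact equation at $nT$ and evaluating at the right endpoint $(n+1)T$ (so that the elapsed time is exactly $T$ and the decay factor becomes $e^{-K_5T}$) yields the affine recursion $a_{n+1}\le 2^{p-1}K_4e^{-K_5T}a_n+2^{p-1}C'\Delta^q$, with $a_0=\mathbb{E}\|\xi-\xi\|^p=0$. I would now fix $T$ once and for all so large that $\rho:=2^{p-1}K_4e^{-K_5T}<1$; iterating then gives $a_n\le \tfrac{2^{p-1}C'}{1-\rho}\Delta^q$ for every $n$, a bound independent of $n$. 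Feeding this back into the general-time decomposition and using $e^{-K_5(t_k-nT)}\le1$ produces $\sup_k\mathbb{E}\|x_{t_k}^{0,\xi}-X_{t_k}^{0,\xi}\|^p\le C\Delta^q$ with an explicit $C$, which is the claim (with $p=2$ matching Lemma \ref{L13}).

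The main obstacle I anticipate is the rigorous application of Conditions \ref{con19}(iii)--(iv) with the random, $\mathcal F_{nT}$-measurable initial datum $X_{nT}^{0,\xi}$ rather than a deterministic element of a compact set: the clean way is to condition on $\mathcal F_{nT}$, invoke the deterministic-initial-data attraction and one-step convergence estimates pointwise in the frozen initial value, and then take expectations using the Markov property, with the uniform moment bound of Lemma \ref{L13} used to integrate the $(1+\|\cdot\|^p)$ factor. A secondary point to verify is the compatibility of the block length $T$ with the mesh constraint $\Delta=T/N$, i.e.\ that $T$ can be taken large enough to force $\rho<1$ while remaining an integer multiple of $\Delta$; since $T$ is fixed first and $\Delta$ is then refined through divisors of $T$, no circularity arises.
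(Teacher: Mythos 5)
Your proposal is correct and follows essentially the same route as the paper: the paper omits a detailed proof of this theorem, stating it is proved like Theorem \ref{T--2} (Appendix A) with Lemma \ref{L13} supplying the uniform moment bound, and that argument is exactly your block decomposition — restart the true solution at each block endpoint from the numerical segment, control one piece by the attraction condition and the other by the local convergence estimate plus the uniform moment bound, choose $T$ large enough that the contraction factor beats the $2^{p-1}$ from the triangle inequality, and sum the resulting geometric series. Your recursion $a_{n+1}\le\rho a_n+2^{p-1}C'\Delta^q$ at block endpoints is a slightly cleaner bookkeeping of the paper's iteration over windows, and your remark about conditioning on $\mathcal F_{nT}$ to justify using Condition \ref{con19}(iii) with random initial data is, if anything, more careful than the paper itself.
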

With the help of Lemma \ref{L13}, this theorem can be proved in a similar way to Theorem \ref{T--2}, so we omit the concrete proof.

Next, utilizing the strong convergence of $\{X_{t_k}^{0,\xi}\}_{k \geq 0}$, we can show that the  probability measure of $\{X_{t_k}^{0,\xi}\}_{k \geq 0}$ converges to the invariant measure of the segment process associated with the SDDE \eqref{SDDE}.
For any $\xi \in C$ and $B \in \mathfrak{B}(C)$, the transition probabilities for the segment process $X_{t_k}$ with initial value $X_0 =\xi$ is defined as
\begin{align*}
   \overline{\PP}_{t_k}(\xi, B) :=\PP(X_{t_k} \in B | X_0 = \xi)
\end{align*}

Following a similar argument as in \cite[Theorem 3.1]{YZM2003}, it is not difficult to see that the process $\{x_t\}_{t\geq 0}$ has a unique invariant measure denoted by $\pi (\cdot)$ under the second and fifth conditions of Condition \ref{con19}.

 Then, we have the following theorem.
\begin{theorem}\label{T-9}
    Under Condition \ref{con19}, for any $\xi \in C_{\mathcal F_0}^{2}$, the probability measure of $\{X_{t_k}^{0,\xi}\}_{k \geq 0}$ converges to the underlying invariant measure $\pi(\cdot)$ in the Fortet-Mourier distance $d_{\mathbb{L}}$ as the step size tends to zero, that is
   \begin{align*}
        \lim \limits_{\substack{k\to \infty \\ \Delta \to 0}}d_{\mathbb{L}}(\overline{\mathbb{P}}_{t_k}(\xi, \cdot), \pi(\cdot))=0.
   \end{align*}
\end{theorem}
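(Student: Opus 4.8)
The plan is to insert the transition probability of the \emph{true} segment process as an intermediate term and exploit the triangle inequality for the metric $d_{\mathbb{L}}$ (which holds since $d_{\mathbb{L}}$ is defined as a supremum of differences of integrals):
\begin{align*}
d_{\mathbb{L}}(\overline{\mathbb{P}}_{t_k}(\xi,\cdot), \pi(\cdot)) \leq d_{\mathbb{L}}(\overline{\mathbb{P}}_{t_k}(\xi,\cdot), \mathbb{P}_{t_k}(\xi,\cdot)) + d_{\mathbb{L}}(\mathbb{P}_{t_k}(\xi,\cdot), \pi(\cdot)).
\end{align*}
The first term measures the discrepancy between the numerical and true segment processes after $k$ steps and will be handled by the strong convergence result; the second term is the ordinary convergence of the true Markov process to its invariant measure.

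First I would bound the numerical-versus-true term. By the definition of $d_{\mathbb{L}}$ together with the fact that every $F \in \mathbb{L}$ is globally $1$-Lipschitz on $C$,
\begin{align*}
d_{\mathbb{L}}(\overline{\mathbb{P}}_{t_k}(\xi,\cdot), \mathbb{P}_{t_k}(\xi,\cdot)) = \sup_{F \in \mathbb{L}} \left| \mathbb{E} F(X_{t_k}^{0,\xi}) - \mathbb{E} F(x_{t_k}^{0,\xi}) \right| \leq \mathbb{E}\,\|X_{t_k}^{0,\xi} - x_{t_k}^{0,\xi}\|.
\end{align*}
Applying Jensen's inequality and then Theorem \ref{T-22} gives $\mathbb{E}\,\|X_{t_k}^{0,\xi} - x_{t_k}^{0,\xi}\| \leq \big(\mathbb{E}\,\|X_{t_k}^{0,\xi} - x_{t_k}^{0,\xi}\|^2\big)^{1/2} \leq \sqrt{C}\,\Delta^{q/2}$. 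The essential feature is that this bound is \emph{uniform in} $k$, so it can be made arbitrarily small by shrinking $\Delta$ alone, regardless of how many steps have been taken.

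Next, the second term is controlled directly by the definition of the invariant measure. Since $\pi$ is the unique invariant measure of the true segment process (its existence and uniqueness having been established above), we have $d_{\mathbb{L}}(\mathbb{P}_t(\xi,\cdot), \pi(\cdot)) \to 0$ as $t \to \infty$; evaluated along $t = t_k = k\Delta$, for each fixed $\Delta > 0$ one has $t_k \to \infty$ as $k \to \infty$, hence $d_{\mathbb{L}}(\mathbb{P}_{t_k}(\xi,\cdot), \pi(\cdot)) \to 0$. Combining the two estimates, for every fixed $\Delta \in (0,\Delta^*)$,
\begin{align*}
\limsup_{k \to \infty} d_{\mathbb{L}}(\overline{\mathbb{P}}_{t_k}(\xi,\cdot), \pi(\cdot)) \leq \sqrt{C}\,\Delta^{q/2},
\end{align*}
and letting $\Delta \to 0$ yields the claim.

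The main obstacle — indeed the only delicate point — is the correct handling of the joint limit $k \to \infty$, $\Delta \to 0$, in which $t_k = k\Delta$ governs the second term while $\Delta$ alone governs the first. The uniformity in $k$ of the strong-convergence bound furnished by Theorem \ref{T-22} is precisely what decouples the two limits: one first sends $k \to \infty$ with $\Delta$ frozen, so that $t_k \to \infty$ and the invariant-measure term vanishes, and only afterwards sends $\Delta \to 0$ to eliminate the residual $\sqrt{C}\,\Delta^{q/2}$. Equivalently, given $\varepsilon > 0$ one fixes $\Delta$ small enough that $\sqrt{C}\,\Delta^{q/2} < \varepsilon/2$ and then $k$ large enough that $d_{\mathbb{L}}(\mathbb{P}_{t_k}(\xi,\cdot),\pi(\cdot)) < \varepsilon/2$, which is legitimate because $t_k \to \infty$ for that fixed $\Delta$.
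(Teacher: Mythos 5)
Your proposal is correct and follows essentially the same route as the paper: insert $\mathbb{P}_{t_k}(\xi,\cdot)$ as an intermediate term, bound $d_{\mathbb{L}}(\overline{\mathbb{P}}_{t_k},\mathbb{P}_{t_k})$ by $\big(\mathbb{E}\|x_{t_k}^{0,\xi}-X_{t_k}^{0,\xi}\|^2\big)^{1/2}\leq \sqrt{C}\,\Delta^{q/2}$ via the Lipschitz property of $F\in\mathbb{L}$, H\"older's inequality and Theorem \ref{T-22}, and control $d_{\mathbb{L}}(\mathbb{P}_{t_k},\pi)$ by the convergence of the true segment process to its unique invariant measure. Your explicit discussion of how the uniformity in $k$ of the strong-convergence bound decouples the two limits is a welcome clarification that the paper leaves implicit, but the argument is the same.
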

\begin{proof}
    First, recalling that under Condition \ref{con19}, the process $\{x_t\}_{t \geq 0}$ has a unique invariant measure $\pi$. This means that for any $\epsilon >0$, there exists a $T>0$ such that for any $t_k \geq T$
    \begin{align*}
          d_{\mathbb{L}}(\mathbb{P}_{t_k}(\xi, \cdot), \pi(\cdot)) \leq \frac{\epsilon}{2}.
    \end{align*}
    Next, from the definitions of ~$\mathbb{L}$ and $d_{\mathbb{L}}$, for any $F \in \mathbb{L}$, we have
    \begin{align*}
          d_{\mathbb{L}}(\mathbb{P}_{t_k}(\xi, \cdot), \overline{\mathbb{P}}_{t_k}(\xi, \cdot)) = \sup \limits_{F \in \mathbb{L}}|\mathbb{E}(F(x_{t_k}^{0,\xi}))-\mathbb{E}(F(X_{t_k}^{0,\xi}))| \leq \mathbb{E}(2 \wedge \|x_{t_k}^{0,\xi}- X_{t_k}^{0, \xi}\|).
    \end{align*}
    Note that $\xi \in C_{\mathcal F_0}^{2}$, which means that $\mathbb{E}\|\xi\|^2 \leq \infty$. Thus, by Theorem \ref{T-22} and the H$\ddot{o}$lder inequality, there exists a sufficiently small $\Delta^*$ such that for any $\Delta \in (0, \Delta^*)$
    \begin{align*}
          d_{\mathbb{L}}(\mathbb{P}_{t_k}(\xi, \cdot), \overline{\mathbb{P}}_{t_k}(\xi, \cdot)) \leq \mathbb{E}(\|x_{t_k}^{0,\xi}- X_{t_k}^{0, \xi}\|) \leq \Big(\mathbb{E}(\|x_{t_k}^{0,\xi}- X_{t_k}^{0, \xi}\|^2)\Big)^{\frac{1}{2}} \leq \frac{\epsilon}{2}
    \end{align*}
   Finally, by the triangle inequality yields
    \begin{align*}
      d_{\mathbb{L}}(\mathbb{P}_{t_k}(\xi, \cdot), \pi(\cdot))    \leq d_{\mathbb{L}}(\mathbb{P}_{t_k}(\xi, \cdot), \overline{\mathbb{P}}_{t_k}(\xi, \cdot))+d_{\mathbb{L}}(\mathbb{P}_{t_k}(\xi, \cdot), \pi(\cdot)).
    \end{align*}
    The proof is complete.
\end{proof}
The most classical way to study the numerical approximation of the invariant measures of SDEs or SDDEs is to first prove the existence of invariant measures for both the numerical and true solutions or their corresponding segment processes, and then prove that the two invariant measures will converge as the step size decreases (see e.g. \cite{GLL2023,LMY2019,YM2004}). 

As can be seen from the above proof, in this paper, it is no longer necessary to directly prove the existence of the invariant measure of the numerical segment process. Instead, we use the strong convergence to prove that the probability measure of the numerical segment process converges to the probability measure of the true segment process in the infinite horizon. Subsequently, if the invariant measure of the true segment process exists, indicating convergence of the probability measure of the true segment process to the invariant measure, then the probability measure of the numerical segment process also converges to this invariant measure. 

Therefore, we simplify the proof process. Specifically, for example, we do not have to prove that the numerical solution or its corresponding segment process is also asymptotically attractive,  as in \cite{GLL2023,LMY2019,YM2004}, in order to establish the existence of numerical invariant measures.

\section{Truncated Euler-Maruyama method}
The theory established in the previous section shows that to establish the convergence of numerical methods in infinite perspectives, the numerical and true solutions need to satisfy a series of properties described in Condition \ref{con1}. The question is, do such solutions exist? We will provide a positive answer by considering the truncated Euler-Maruyama method under the following assumptions.
\begin{assumption}\label{as1-1}
    There exist a pair of positive constants $q, a_1$ such that for any  $x,~\bar{x},~y,~\bar y\in \RR ^d $
    \begin{align}\label{S3-1-1}
        |f(x,y)- f(\bar{x}, \Bar{y})|  &\leq a_1(|x-\Bar{x}|+|y-\Bar{y}| )(1+|x|^q + |\Bar{x}|^q+ |y|^q+ |\Bar{y}|^q), \\
        |g(x,y)- g(\bar{x}, \Bar{y})|^2 &\leq a_1(|x-\Bar{x}|^2+|y-\Bar{y}|^2 )(1+|x|^q + |\Bar{x}|^q+ |y|^q+ |\Bar{y}|^q)
    \end{align}
\end{assumption}
\begin{assumption}\label{as1-2}
    There exist nonnegative constants $p$, $b_1, b_2, b_3, \overline{b}_1, \overline{b}_2, \overline{b}_3$ with $p > 4q \vee 3q+8, b_2 > b_3, ~\overline{b}_2> \overline{b}_3$ such that for any $x,~y\in \RR ^d $
    \begin{align}
    \notag
       & \Big(1+|x|^2\Big)^{\frac{p}{2}-1}\Big(\langle 2x, f(x,y) \rangle + (p-1)|g(x,y)|^2\Big) \leq b_1 - b_2|x|^p +b_3|y|^p. \\ \notag
     &\quad\quad\quad  \langle 2x, f(x,y) \rangle + (p-1)|g(x,y)|^2 \leq \overline{b}_1 -\overline{ b}_2|x|^2 +\overline{b}_3|y|^2. 
    \end{align}
\end{assumption}
\begin{assumption}\label{as1-3}
    There exist nonnegative constants $ b_4, b_5, \sigma$ with $b_4 > b_5$ and $\sigma >1$ such that for any  $x,~\bar{x},~y,~\bar y\in \RR ^d $
    \begin{align}\label{S3-1-3}
        &\quad 2\langle x-\Bar{x}, f(x,y)- f(\Bar{x}, \Bar{y}) \rangle + \sigma|g(x,y) - g(\Bar{x}, \Bar{y})|^2 \leq -b_4|x-\Bar{x}|^2 +b_5|y- \Bar{y}|^2.
    \end{align}
\end{assumption}
\begin{assumption}\label{as1-4}
     The initial value is H$\rm{\ddot{o}}$lder continuous, i.e., there exist a positive constant $K_{{4}}$ such that for all $-\tau \leq s < t \leq 0$
     \begin{align}\label{S3-1-4}
         \mathbb{E}|\xi(t)-\xi(s)|^p \leq K_{{4}}(t-s)^{\frac{p}{2}}.
     \end{align}
\end{assumption}

Then, we proceed to introduce the definition of the truncated EM method. Choose a strictly increasing function $\phi : [1, \infty) \to \mathbb{R}_+$ such that $\phi (r) \to \infty$ as $r \to \infty$ and
\begin{align}\label{S3-1-5}
    \sup \limits_{|x|\wedge |y| \leq r} \left( \frac{|f(x,y)|}{1+|x|} \wedge \frac{|g(x,y)|^2}{(1+|x|)^2}\right) \leq \phi(r), \quad \quad \forall ~r \geq 1.
\end{align}
For some given $\Delta^* \in (0,1)$ and $N,M \in \mathbb{N}_+$, let $\Delta = \frac{\tau}{M} = \frac{T}{N} \in (0, \Delta^*)$  and $\phi^{-1} : [\phi(1), + \infty) \to \mathbb{R}_+$ be the inverse function of $\phi$. Define the truncation mapping $\pi_{\Delta}: \mathbb{R}^d \to \mathbb{R}^d$ by 
\begin{align}\label{PI}
    \pi_{\Delta}(x) = \Big(|x| \wedge \phi^{-1}(K \Delta^{- \nu}) \Big) \frac{x}{|x|},
\end{align}
where $\frac{x}{|x|} = 0$ when $x =0 $, $\nu :=\frac{q+2}{p-2} \in (0, \frac{1}{3}]$, and 
$K := 1 \vee \phi(1) \vee f(0,0) \vee (g(0,0))^2$. 
From \eqref{S3-1-1}, we note that
\begin{align}
    |f(x,y)|&\leq |f(x,y) - f(0,0)| + |f(0,0)| \\ \notag
    &\leq a_1 (|x|+|y|)(1+|x|^q+|y|^q)+|f(0,0)| \\ \notag
    &\leq C_1 (1+|x|^{q+1}+|y|^{q+1}),
    \end{align}
    and
   \begin{align}\label{PG}
    |g(x,y)|& \leq C_2 (1+|x|^{\frac{q}{2}+1}+|y|^{\frac{q}{2}+1}),
\end{align}
where $C_1:=3a_1+ |f(0,0)|$ and $C_2= 3\sqrt{a_1}+|g(0,0)| $. 
It also implies that we can take
\begin{align}
    \phi (r) &= [C_1(1+2r^{q+1})] \wedge [C_2(1+2r^{\frac{q}{2}+1})]^2 \\ \notag
    &\leq C_1+2C_2^2 + (2C_1+8C_2^2)r^{q+2},
\end{align}
where  $r \geq 1$. Thus
\begin{align}\label{PHI-1}
    \phi ^{-1} (y) = \Big(\frac{y- (C_1+2C_2^2)}{2C_1+8C_2^2}\Big)^{\frac{1}{q+2}},
\end{align}
for $y \geq 3C_1+10C_2^2$.
Then for any $\xi \in C_{\mathcal F_0}^{p}$, define the truncated EM method of SDDE \eqref{SDDE} by
\begin{align}\label{DTSDDE}
    \begin{split}
  \left \{
 \begin{array}{ll}
y_k=\widetilde{y}_k  = \xi (t_k), \quad \quad k=-M,...,0,\\
 \widetilde{y}_k = y_{k-1} + f(y_{k-1}, y_{k-1-M})\Delta+ f(y_{k-1}, y_{k-1-M})\Delta W_{k-1},\quad i=1,2,...,\\
  y_{k}= \pi_{\Delta}(\widetilde{y}_k) \quad \quad i=1,2,...,
 \end{array}
 \right.
 \end{split}
 \end{align}
 where $t_k = k\Delta$ for $k \leq -M$ and $\Delta W_k=W(t_{k+1})-W(t_k)$. This, along with \eqref{S3-1-5} and \eqref{PI}, implies that
 \begin{align}\label{CON}
     |f(y_k, y_{k-M})| \leq  K \Delta^{- \nu}(1+|y_k|), \quad |g(y_k, y_{k-M})| \leq  K^{\frac{1}{2}} \Delta^{- \frac{\nu}{2}}(1+|y_k|).
 \end{align}
 Finally, define several continuous - time numerical schemes $\widetilde{y}(t), y(t), z(t)$ by
 \begin{align}\label{CDTSDDE2}
     \widetilde{y}(t) :=\widetilde{y}_k, \quad  \quad y(t) :=y_k, \quad \quad \forall t \in [t_k, t_{k+1}),
 \end{align}
 as well as
 \begin{align}\label{CDTSDDE}
    \begin{split}
  \left \{
 \begin{array}{ll}
z(t)=\xi(t), \quad\quad\quad\quad\forall t \in [-\tau,0],\\
z(t)=y_k +f(y_k, y_{k-M}) (t-t_k)+g(y_k, y_{k-M}) (W(t)-W(t_k)), \quad \forall t \in [t_k, t_{k+1}).
 \end{array}
 \right.
 \end{split}
 \end{align}
 Obviously, $y(t_k) = z(t_k) = y_k$ for $i \geq -M$. Denote $Z_t:= \{z(t+\theta): -\tau \leq \theta \leq 0\}$, then in a way similar to the proof of \cite[Lemma 5.1]{BSY-2023} , we obtain the following result, which is the first condition in condition \ref{con1}.
 \begin{lemma}
 The process $\{Z_{t_k}\}_{k \geq 0}$ is a homogeneous Markov chain, i.e., for any $B \in \mathfrak{B}(C)$ and $\xi \in C$
  \begin{align}
      \mathbb{P}\big(~Z_{t_{k+1}} \in B |~Z_{t_k} = \xi  \big)= \mathbb{P}\big(~Z_{t_1} \in B |~Z_{0}= \xi \big),
  \end{align}
  and 
  \begin{align}
       \mathbb{P}\big(~Z_{t_{k+1}} \in B |~\mathcal{F}_{t_k} \big)= \mathbb{P}\big(~Z_{t_{k+1}} \in B |~Z_{t_k} \big).
  \end{align}
 \end{lemma}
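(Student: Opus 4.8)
The plan is to realize one step of the scheme as a fixed measurable functional of the current segment and of an independent Brownian increment, and then to invoke the standard independence (``freezing'') lemma for conditional expectations. The key structural observation, which hinges on the commensurability $\tau = M\Delta$ fixed in Section 2, is that the two node values entering the local recursion on $[t_k, t_{k+1})$ are recoverable from $Z_{t_k}$: since $z(t_k) = y_k$ and $z(t_{k-M}) = y_{k-M}$, we have $y_k = Z_{t_k}(0)$ and $y_{k-M} = Z_{t_k}(-\tau)$.

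First I would construct a single map $G$, the same for every $k$, sending a segment $\varphi \in C$ and a path $w$ on $[0,\Delta]$ with $w(0)=0$ to a new segment $G(\varphi, w)$: on the overlap $\theta \in [-\tau, -\Delta]$ set $G(\varphi,w)(\theta) = \varphi(\theta + \Delta)$ (a pure time shift of the retained part of the old path), and on the fresh sub-interval $\theta \in [-\Delta, 0)$ set
\[
G(\varphi,w)(\theta) = \varphi(0) + f\big(\varphi(0),\varphi(-\tau)\big)(\theta + \Delta) + g\big(\varphi(0),\varphi(-\tau)\big)\, w(\theta + \Delta),
\]
the updated node value $G(\varphi,w)(0)$ being obtained by applying the truncation $\pi_\Delta$ to the terminal value of this local expression (cf. \eqref{DTSDDE} and \eqref{CDTSDDE}). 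Unwinding the definitions of $z$ and of $Z_{t_{k+1}}(\theta) = z(t_{k+1}+\theta)$, and splitting $\theta \in [-\tau,0]$ at $-\Delta$ according to whether $t_{k+1}+\theta$ lies in the already-built interval $[t_{k+1-M}, t_k]$ or in the fresh interval $[t_k, t_{k+1}]$, yields the one-step identity $Z_{t_{k+1}} = G\big(Z_{t_k}, B^{(k)}\big)$, where $B^{(k)}(s) := W(t_k+s) - W(t_k)$, $s \in [0,\Delta]$. Measurability of $G$ follows from that of $f$, $g$ and $\pi_\Delta$, and $G$ is manifestly independent of $k$.

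Next I would invoke the two probabilistic inputs: $Z_{t_k}$ is $\mathcal{F}_{t_k}$-measurable, while $B^{(k)}$ is independent of $\mathcal{F}_{t_k}$ by the independent-increments property of $W$, and its law on path space is that of a Brownian motion on $[0,\Delta]$, hence identical for every $k$. Applying the freezing lemma with $\psi(\varphi,w) = \mathbf{1}_B(G(\varphi,w))$, $X = Z_{t_k}$, $Y = B^{(k)}$ and $\mathcal{G} = \mathcal{F}_{t_k}$ gives
\[
\mathbb{P}\big(Z_{t_{k+1}} \in B \mid \mathcal{F}_{t_k}\big) = \Psi_k(Z_{t_k}), \qquad \Psi_k(\varphi) := \mathbb{P}\big(G(\varphi, B^{(k)}) \in B\big).
\]
Since the right-hand side is $\sigma(Z_{t_k})$-measurable, conditioning once more on $\sigma(Z_{t_k})$ and using the tower property converts this into the Markov property $\mathbb{P}(Z_{t_{k+1}} \in B \mid \mathcal{F}_{t_k}) = \mathbb{P}(Z_{t_{k+1}} \in B \mid Z_{t_k})$. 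Time-homogeneity is then immediate: as neither $G$ nor the law of $B^{(k)}$ depends on $k$, the kernel $\Psi_k$ is independent of $k$, whence $\mathbb{P}(Z_{t_{k+1}} \in B \mid Z_{t_k} = \xi) = \Psi_0(\xi) = \mathbb{P}(Z_{t_1} \in B \mid Z_0 = \xi)$.

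I expect the only genuine obstacle to be the bookkeeping behind the one-step identity $Z_{t_{k+1}} = G(Z_{t_k}, B^{(k)})$: one must verify that the overlap $[t_{k+1-M}, t_k]$ of consecutive segments is transported exactly by the shift $\theta \mapsto \varphi(\theta+\Delta)$, that the fresh piece depends on the past only through the endpoint data $\varphi(0), \varphi(-\tau)$ and the increment, and that the truncation $\pi_\Delta$ enters only at the updated node. All three rely on $\tau = M\Delta$; without it the node $y_{k-M}$ would not correspond to a value of $Z_{t_k}$. These verifications, together with the attendant regularity of the segment process, are routine and parallel the proof of \cite[Lemma 5.1]{BSY-2023}.
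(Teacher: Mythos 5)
Your argument is correct and is essentially the approach the paper intends: the paper omits the proof here, referring to \cite[Lemma 5.1]{BSY-2023}, but its explicit proof of the analogous statement for the BEM segment process (Lemma \ref{L--5}) follows exactly your scheme of writing $Z_{t_{k+1}}$ as a fixed measurable functional of $Z_{t_k}$ and the increment $W(t_k+\cdot)-W(t_k)$, then invoking the independence/freezing lemma (Lemma 9.2 of \cite{M2007}) and the equality in law of the Brownian increments. Your explicit one-step map $G$, with $y_k=Z_{t_k}(0)$ and $y_{k-M}=Z_{t_k}(-\tau)$ recovered from the segment via $\tau=M\Delta$ and the truncation $\pi_\Delta$ entering only at the updated node, is a faithful (indeed somewhat more carefully organized) rendering of that argument.
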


 \subsection{The second condition in Condition \ref{con1}}
 In this subsection, we will show the boundedness of moments of $z(t)$.
 \begin{lemma}\label{L23}
     Under Assumption \ref{as1-2}, the truncated Euler-Maruyama numerical solution \eqref{CDTSDDE} satisfies
     \begin{align*}
         \sup \limits_{t \geq 0} \mathbb{E} |z(t)|^p \leq C_3 \Big(  1+\sup \limits_{ -M \leq i \leq 0}\mathbb{E}\Big(|y_{i}|^p\Big) \Big) 
     \end{align*}
     where $C_3$ is a positive constant.
 \end{lemma}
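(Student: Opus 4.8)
The plan is to run the whole argument through the Lyapunov-type functional $V(x)=(1+|x|^2)^{p/2}$, for which Assumption \ref{as1-2} is precisely tailored. On each subinterval $[t_k,t_{k+1})$ the scheme \eqref{CDTSDDE} is an It\^o process with frozen coefficients $f_k:=f(y_k,y_{k-M})$, $g_k:=g(y_k,y_{k-M})$; writing $\mathcal{A}$ for the associated (constant-coefficient) generator and computing $\nabla V$, $\nabla^2 V$, the bound $|g_k^\top y_k|^2\le(1+|y_k|^2)|g_k|^2$ gives
\begin{equation*}
\mathcal{A}V(y_k)\le \tfrac{p}{2}(1+|y_k|^2)^{\frac{p}{2}-1}\big(\langle 2y_k,f_k\rangle+(p-1)|g_k|^2\big)\le \tfrac{p}{2}\big(b_1-b_2|y_k|^p+b_3|y_{k-M}|^p\big).
\end{equation*}
The proof then splits into a uniform-in-$k$ moment bound for the grid values $y_k$ and a routine interpolation step controlling $z(t)$ between grid points.

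For the discrete bound I would apply It\^o's formula to $V(z(t))$ on $[t_k,t_{k+1}]$. Since $z(t_{k+1}^-)=\widetilde y_{k+1}$ and the truncation map \eqref{PI} never increases the Euclidean norm, monotonicity of $V$ in $|x|$ gives $V(y_{k+1})=V(\pi_\Delta(\widetilde y_{k+1}))\le V(\widetilde y_{k+1})$. For fixed $\Delta$ the iterates $y_k$ are a.s.\ bounded by $\phi^{-1}(K\Delta^{-\nu})$, so the stochastic integral is a genuine martingale and, on taking expectations,
\begin{equation*}
\mathbb{E}V(y_{k+1})\le \mathbb{E}V(y_k)+\int_{t_k}^{t_{k+1}}\mathbb{E}\,\mathcal{A}V(y_k)\,ds+\int_{t_k}^{t_{k+1}}\mathbb{E}\big[\mathcal{A}V(z(s))-\mathcal{A}V(y_k)\big]\,ds.
\end{equation*}
The middle integral is controlled by the dissipativity estimate above. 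Using the elementary comparison $\alpha V(x)-\beta\le|x|^p\le V(x)$ (valid for any $\alpha\in(0,1)$ with a suitable $\beta$), and writing $U_k=\mathbb{E}V(y_k)$, this yields a delay difference inequality $U_{k+1}\le U_k-c_1\Delta U_k+c_2\Delta U_{k-M}+c_3\Delta+R_k$, with $c_1=\tfrac{p}{2}b_2\alpha$, $c_2=\tfrac{p}{2}b_3$, and $R_k$ the discretisation remainder.

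Choosing $\alpha$ close to $1$ makes $c_1>c_2$, which is exactly where $b_2>b_3$ in Assumption \ref{as1-2} is used; a discrete Halanay-type argument (or a direct induction comparing $U_k$ to the equilibrium level $c_3/(c_1-c_2)$) then gives $\sup_k U_k\le C(1+\max_{-M\le i\le 0}U_i)$, hence $\sup_k\mathbb{E}|y_k|^p\le C(1+\sup_{-M\le i\le 0}\mathbb{E}|y_i|^p)$. For the continuous interpolation, on $t\in[t_k,t_{k+1})$ I would use $|z(t)|^p\le 3^{p-1}\big(|y_k|^p+|f_k|^p(t-t_k)^p+|g_k|^p|W(t)-W(t_k)|^p\big)$, factor the independent Brownian increment, and insert \eqref{CON}: the resulting factors $\Delta^{p(1-\nu)}$ and $\Delta^{p(1-\nu)/2}$ are bounded because $\nu\le 1/3<1$ and $\Delta<1$, so $\mathbb{E}|z(t)|^p\le C(1+\mathbb{E}|y_k|^p)$. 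Taking the supremum over $t$ and combining with the discrete bound yields the claim with $C_3$ independent of $t$.

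The main obstacle is the remainder $R_k=\int_{t_k}^{t_{k+1}}\mathbb{E}[\mathcal{A}V(z(s))-\mathcal{A}V(y_k)]\,ds$. Taylor-expanding $\mathcal{A}V$ about $y_k$ in the increment $\delta=z(s)-y_k=f_k(s-t_k)+g_k(W(s)-W(t_k))$ produces terms coupling the derivatives of $V$ (polynomial of order up to $p-2$ in $|y_k|$) with the truncated coefficients, which by \eqref{CON} carry the negative powers $\Delta^{-\nu},\Delta^{-\nu/2}$. The delicate point is to show that the mean-nonzero contributions are genuinely $o(\Delta|y_k|^p)$, so that they do not overwhelm the dissipative term $-c_1\Delta U_k$; moreover the higher-order expansion terms can generate powers of $|y_k|$ exceeding $p$, and reabsorbing these into $|y_k|^p$ (via Young's inequality, at the cost of constants and possibly a smaller $\Delta^*$) is exactly what forces the constraints $\nu=(q+2)/(p-2)\le 1/3$ and $p>4q\vee(3q+8)$. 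This bookkeeping, rather than any conceptual difficulty, is the crux of the argument.
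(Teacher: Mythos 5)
Your proposal follows essentially the same route as the paper: the Lyapunov function $(1+|x|^2)^{p/2}$, a one-step dissipativity estimate from Assumption \ref{as1-2} combined with the truncation bounds \eqref{CON}, a discrete delay recursion exploiting $b_2>b_3$ (the paper does this by geometric summation and index shifting where you invoke a discrete Halanay argument, but these are interchangeable), and a final interpolation of $z(t)$ between grid points. The only substantive difference is presentational: the paper imports the one-step conditional-moment estimate --- including the $C\Delta^{4/3}$ remainder you correctly identify as the crux --- directly from Theorem 3.1 of \cite{SHGL2022}, whereas you propose to rederive it via It\^o's formula for the frozen-coefficient generator; the bookkeeping you outline is exactly what that cited proof carries out.
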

   \begin{proof}
       Following a similar way to the proof of Theorem 3.1 in \cite{SHGL2022}, we get the following result
       \begin{align}\label{L46-EQ-1}
         &\quad  \mathbb{E}\Big( (1+|\widetilde{y}_k|^2)^{\frac{p}{2}} \Big| ~\mathcal{F}_{t_{k-1}}\Big)  \\ \notag
         & \leq \Big(1+|y_{k-1}|^2\Big)^{\frac{p}{2}} \Big(1+C\Delta^{\frac{4}{3}} + \frac{p\Delta}{2} \frac{(1+|y_{k-1}|^2)(2 \langle y_{k-1}, f_{k-1} \rangle + |g_{k-1}|^2) + (p-2)| \langle y_{k-1}, g_{k-1} \rangle|^2}{(1+ |y_{k-1}|^2)^2} \Big) \\ \notag
         &\leq \Big(1+C\Delta^{\frac{4}{3}}\Big)\Big(1+|y_{k-1}|^2\Big)^{\frac{p}{2}} + \frac{p\Delta}{2} \Big(1+|y_{k-1}|^2\Big)^{\frac{p}{2} -1} \Big(2 \langle y_{k-1}, f_{k-1} \rangle +(p-1) |g_{k-1}|^2\Big) \\ \notag
         &\leq \Big(1+C\Delta^{\frac{4}{3}}\Big)\Big(1+|y_{k-1}|^2\Big)^{\frac{p}{2}} + \frac{p\Delta}{2} \Big(b_1 - b_2 |y_{k-1}|^p + b_3 |\widetilde{y}_{k-1-M}|^p\Big),
       \end{align}
      where $f_k := f(y_k, y_{k-M})$ and $g_k :=g(y_k, y_{k-M})$. 
      Next, since $b_2 > b_3$, there exists a unique positive constant $\kappa_1$ satisfying the following inequalities
      \begin{align*}
          b_2-2^{\frac{p}{2}+1}\kappa_1 =b_3 e^{2\kappa_1 \tau} ,
      \end{align*}
     and for any $\kappa \in [0, \kappa_1 \wedge \frac{b_2-b_3}{2} \wedge 1]$
      \begin{align}\label{L23-1}
           b_2-2^{\frac{p}{2}+1}\kappa-b_3 e^{2\kappa\tau} \geq 0 .
      \end{align}
        Choose $\Delta$ sufficiently small such that  $C\Delta ^ {\frac{1}{3}} \leq \kappa$,  then for any fixed $\kappa \in [0,\kappa_1 \wedge \frac{b_2-b_3}{2} \wedge 1]$, by the elementary inequality  $ (|a|-1)^{\frac{p}{2}}\geq |a|^{\frac{p}{2}}/2^{\frac{p}{2}}-1$ with $|a|\geq 1$ and \eqref{L46-EQ-1}, we have
       \begin{align*}
           &\quad  \mathbb{E}\Big( (1+|y_k|^2)^{\frac{p}{2}} \Big| ~\mathcal{F}_{t_{k-1}}\Big) \\ \notag
          & \leq  \mathbb{E}\Big( (1+|\widetilde{y}_k|^2)^{\frac{p}{2}} \Big| ~\mathcal{F}_{t_{k-1}}\Big)  \\ \notag 
           &\leq \Big(1+C\Delta^{\frac{4}{3}}\Big)\Big(1+|y_{k-1}|^2\Big)^{\frac{p}{2}} + \frac{p\Delta}{2} \Big(b_1 -2\kappa (1+|y_{k-1}|^2)^{\frac{p}{2}} +2^{\frac{p}{2}+1} \kappa - (b_2-2^{\frac{p}{2}+1}\kappa) |y_{k-1}|^p \\ \notag
           & \quad+ b_3 |y_{k-1-M}|^p  \Big) \\ \notag
           & \leq \Big(1 - \kappa \Delta\Big) \Big(1+|y_{k-1}|^2\Big)^{\frac{p}{2}} + \frac{p\Delta}{2} \Big(b_1  + 2^{\frac{p}{2}+1} \kappa - (b_2 - 2^{\frac{p}{2}+1}\kappa) |y_{k-1}|^p + b_3 |y_{k-1-M}|^p  \Big).
       \end{align*}
      This implies that      
      \begin{align}\label{L23-2}
       &\quad \mathbb{E}\Big( 1+|y_k|^2\Big)^{\frac{p}{2}}  \\ \notag 
           & \leq \Big(1 - \kappa \Delta\Big)^k \mathbb{E}\Big(1+|y_{0}|^2\Big)^{\frac{p}{2}} + \frac{p\Delta}{2} \Big(b_1  + 2^{\frac{p}{2}+1} \kappa\Big) \sum \limits_{ i=0}^{k-1}(1 - \kappa \Delta)^i  \\ \notag
            & \quad +\frac{p\Delta}{2}\Big( - (b_2 - 2^{\frac{p}{2}+1}\kappa) \sum \limits_{ i=0}^{k-1}(1 - \kappa \Delta)^i \mathbb{E}\Big(|y_{k-i-1}|^p\Big) + b_3 \sum \limits_{ i=0}^{k-1}(1 - \kappa \Delta)^i \mathbb{E}\Big(|y_{k-i-1-M}|^p\Big) \Big).
       \end{align}
       Meanwhile, we note that
       \begin{align}\label{L23-3}
          &\quad \sum \limits_{ i=0}^{k-1}(1 - \kappa \Delta)^i \mathbb{E}\Big(|y_{k-i-1-M}|^p\Big) \\ \notag
           &= (1 - \kappa \Delta)^{-M} \sum \limits_{ i=0}^{k-1}(1 - \kappa \Delta)^{i+M} \mathbb{E}\Big(|y_{k-i-1-M}|^p\Big)  \\ \notag
           & =- (1 - \kappa \Delta)^{-M} \sum \limits_{ i=0}^{M-1}(1 - \kappa \Delta)^{i} \mathbb{E}\Big(|y_{k-i-1}|^p\Big)  + (1 - \kappa \Delta)^{-M} \sum \limits_{ i=0}^{k-1}(1 - \kappa \Delta)^{i} \mathbb{E}\Big(|y_{k-i-1}|^p\Big)\\ \notag
          &\quad + (1 - \kappa \Delta)^{-M} \sum \limits_{ i=k}^{k-1+M}(1 - \kappa \Delta)^{i} \mathbb{E}\Big(|y_{k-i-1}|^p\Big) .
       \end{align}
        Since
        \begin{align*}
            \lim \limits_{\Delta \to 0}   (1-\kappa \Delta)^{-M} =  \lim \limits_{\Delta \to 0}   (1-\kappa \Delta)^{-\frac{\tau}{\Delta}}=e^{\kappa\tau},
        \end{align*}
       there exists a positive constant $\Delta^* \in [0,1)$ such that for any $\Delta \in [0, \Delta^*)$
       \begin{align}\label{L23-5}
           (1-\kappa \Delta)^{-M} \leq e^{2\kappa\tau}.
       \end{align}
      Then, combining \eqref{L23-1}, \eqref{L23-2}, \eqref{L23-3}, and \eqref{L23-5} together, and using the fact $ \sum \limits_{ i=0}^{+\infty}(1 - \kappa \Delta)^i \leq \frac{1}{\kappa \Delta}$,  we obtain
       \begin{align}\label{L23-6}
            &\quad \mathbb{E}\Big( |y_k|^p\Big) \\ \notag
            &\leq \mathbb{E}\Big( 1+|y_k|^2\Big)^{\frac{p}{2}}  \\ \notag 
              & \leq \Big(1 - \kappa \Delta\Big)^k \mathbb{E}\Big(1+|y_{0}|^2\Big)^{\frac{p}{2}} + \frac{p\Delta}{2} \Big(b_1  + 2^{\frac{p}{2}+1} \kappa\Big) \sum \limits_{ i=0}^{k-1}(1 - \kappa \Delta)^i  \\ \notag
            & \quad +\frac{p\Delta}{2}\Big(  b_3(1 - \kappa \Delta)^{-M} \sum \limits_{ i=k}^{k-1+M}(1 - \kappa \Delta)^{i} \mathbb{E}\big(|y_{k-i-1}|^p\big) -b_3 (1 - \kappa \Delta)^{-M} \sum \limits_{ i=0}^{M-1}(1 - \kappa \Delta)^{i} \mathbb{E}\big(|y_{k-i-1}|^p\big) \Big) \\ \notag
           &\leq \mathbb{E}\Big(1+|y_{0}|^2\Big)^{\frac{p}{2}} + \frac{p}{2\kappa} \Big(b_1  +2^{\frac{p}{2}+1} \kappa\Big)  +\frac{pb_3e^{2\kappa \tau}}{2\kappa}\Big(   \sup \limits_{ -M \leq i \leq -1}\mathbb{E}\big(|y_{i}|^p\big) \Big) 
       \end{align}
       On the other hand, by \eqref{PG}, \eqref{DTSDDE}, \eqref{CON} , \eqref{CDTSDDE2}, \eqref{CDTSDDE} and the fact $\mathbb{E}(|W(t)-W(s)|^i) \leq C(i) |t-s|^{\frac{i}{2}}$, for any $t \in [t_k, t_{k+1}) \subset [0, +\infty)$, we can derive that
       \begin{align*}
           & \quad \mathbb{E}\Big(|z(t) - y(t)|^p\Big) =  \mathbb{E}\Big(|z(t) - y_k|^p\Big)  \\ \notag
           &\leq 2^p\mathbb{E} |f(y_k, y_{k-M})|^p \Delta^p + 2^p \mathbb{E}\Big(|g(y_k, y_{k-M})|^p|W(t) - W(t_k)|^p\Big) \\ \notag
           & \leq C_4 \Big(\mathbb{E} |f(y_k, y_{k-M})|^p \Delta^p + \mathbb{E} |g(y_k, y_{k-M})|^p \Delta^{\frac{p}{2}}\Big) \\ \notag
           &\leq C_4 \Big(2^p K^p \Delta^{-\upsilon p}(1+\mathbb{E}|y_k|^p)\Delta^p  + 2^pK^{\frac{p}{2}}\Delta^{-\frac{p\upsilon}{2}}(1+\mathbb{E}|y_k|^p)\Delta^{\frac{p}{2}}\Big) \
       \end{align*}
       where $C(i)$ is a positive constant dependent on $i$ and $C_4$ is positive constant dependent on $p$. 
       As for any $t\in [t_k,t_{k+1}) \subset [-\tau,0]$, it follows from \eqref{S3-1-4} that
       \begin{align*}
            & \quad \mathbb{E}\Big(|z(t) - y(t)|^p\Big)= \mathbb{E}\Big(|\xi(t)-\xi(t_k)|^p\Big) \leq K_1\Delta^{\frac{p}{2}},
       \end{align*}
       Thus, for any $t\in [t_k,t_{k+1}) \subset [-\tau,+\infty)$, we have
       \begin{align}\label{L23-7}
           \mathbb{E}\Big(|z(t) - y(t)|^p\Big) \leq C_5 (1+\mathbb{E}|y_k|^p) \Delta^{\frac{p(1-\upsilon)}{2}},
       \end{align}
       where $C_5$ is a positive constant dependent on $C_4$ and $ K_1$.
       Then, by the elementary inequality $(a+b)^p \leq 2^p(|a|^p + |b|^p)$, we have
       \begin{align}\label{L23-8}
           \mathbb{E}\Big(|z(t)|^p\Big) &= \mathbb{E}\Big(|z(t) -y(t) + y(t)|^p\Big) \\ \notag
           &\leq 2^p  \mathbb{E}\Big(|z(t) -y(t) |^p\Big) + 2^p \mathbb{E}\Big(|y(t)|^p\Big) \\ \notag
           &\leq 2^p C_5 \Delta^{\frac{p(1-\upsilon)}{2}} + 2^p\Big(C_5\Delta^{\frac{p(1-\upsilon)}{2}}+1\Big)\mathbb{E}\Big(|y(t)|^p\Big)
       \end{align}
      Inserting \eqref{L23-6} into \eqref{L23-8}, and let 
      \begin{align*}
          C_3 : &= \Big(2^p C_5 \Delta^{\frac{p(1-\upsilon)}{2}} + 2^p\big(C_5\Delta^{\frac{p(1-\upsilon)}{2}}+1\big)\big(2^{\frac{p}{2}}+ \frac{p}{2\kappa}(b_1+2^{\frac{p}{2}+1} \kappa)\big)\Big)\vee  \Big( 2^p C_5 \Delta^{\frac{p(1-\upsilon)}{2}} \\ \notag
          &\quad+ 2^p\big(C_5\Delta^{\frac{p(1-\upsilon)}{2}}+1\big)\big(2^{\frac{p}{2}} + \frac{pb_3e^{2\kappa \tau}}{2\kappa}
          \big)\Big),   
      \end{align*}
      the desired assertion follows.
   \end{proof}  

\subsection{The third and fourth conditions in Condition \ref{con1}}
In this subsection, we will proceed to prove that the TEM method satisfies the third and fourth conditions in Condition \ref{con1}. First, we need a lemma.
\begin{lemma}\label{L25}
   Under Assumption \ref{as1-2}, the true solution of \eqref{SDDE} satisfies
  \begin{align}
       \sup \limits_{t \geq 0} \mathbb{E} |x(t)|^p \leq C_6 \Big(1+\sup \limits_{ -\tau \leq s \leq 0}\mathbb{E}(|x(s)|^p)\Big)
  \end{align}
  where $C_6$ is a positive constant.
\end{lemma}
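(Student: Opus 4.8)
The plan is to establish this moment bound by the continuous-time analogue of the argument used for Lemma \ref{L23}, replacing the one-step conditional estimate by an application of It\^o's formula together with an exponential weighting trick. Set $V(x)=(1+|x|^2)^{p/2}$. Applying It\^o's formula to $V(x(t))$ along \eqref{SDDE} produces the drift term
\begin{align*}
 \mathcal{L}V(x,y) = \tfrac{p}{2}(1+|x|^2)^{\frac p2-1}\big(2\langle x,f(x,y)\rangle+|g(x,y)|^2\big)+\tfrac{p(p-2)}{2}(1+|x|^2)^{\frac p2-2}|x^\top g(x,y)|^2
\end{align*}
together with a local martingale. Using $|x^\top g(x,y)|^2\le(1+|x|^2)|g(x,y)|^2$ to absorb the last term (legitimate since $p\ge 2$), $\mathcal{L}V$ is bounded above by $\tfrac{p}{2}(1+|x|^2)^{\frac p2-1}\big(\langle 2x,f(x,y)\rangle+(p-1)|g(x,y)|^2\big)$, which is exactly the quantity controlled by the first inequality of Assumption \ref{as1-2}. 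Hence $\mathcal{L}V(x,y)\le \tfrac p2\big(b_1-b_2|x|^p+b_3|y|^p\big)$. To justify taking expectations of the It\^o formula before the moments are known to be finite, I would first localize with the stopping times $\rho_n=\inf\{t\ge0:|x(t)|\ge n\}$, kill the martingale in expectation, and recover the bound by Fatou's lemma as $n\to\infty$.

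Next I would introduce an exponential weight. Writing $u(t)=\mathbb{E}V(x(t))$, the estimate above gives
\begin{align*}
 \frac{d}{dt}\big(e^{\kappa t}u(t)\big)\le e^{\kappa t}\Big(\kappa\,\mathbb{E}V(x(t))+\tfrac p2 b_1-\tfrac p2 b_2\,\mathbb{E}|x(t)|^p+\tfrac p2 b_3\,\mathbb{E}|x(t-\tau)|^p\Big).
\end{align*}
Using the convexity bound $V(x)\le 2^{\frac p2-1}(1+|x|^p)$ to rewrite $\kappa\,\mathbb{E}V(x(t))$, the coefficient of $\mathbb{E}|x(t)|^p$ becomes $\kappa 2^{\frac p2-1}-\tfrac p2 b_2$. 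Since $b_2>b_3$, one can fix $\kappa\in(0,1]$ small enough that $\tfrac p2 b_2-\kappa 2^{\frac p2-1}\ge \tfrac p2 b_3 e^{\kappa\tau}$; then, after integrating over $[0,t]$ and applying the substitution $s\mapsto s-\tau$ in the delay integral, the two integrals $\int_0^t e^{\kappa s}\mathbb{E}|x(s)|^p\,ds$ cancel. This is the continuous counterpart of the cancellation engineered in \eqref{L23-3}--\eqref{L23-6} through the factor $(1-\kappa\Delta)^{-M}$.

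What remains after the cancellation is the contribution of the initial segment,
\begin{align*}
 \tfrac p2 b_3 e^{\kappa\tau}\int_{-\tau}^{0}e^{\kappa u}\,\mathbb{E}|x(u)|^p\,du\le \tfrac p2 b_3 e^{\kappa\tau}\tau\sup_{-\tau\le s\le0}\mathbb{E}|x(s)|^p,
\end{align*}
together with the constant term $(\tfrac p2 b_1+\kappa 2^{\frac p2-1})\kappa^{-1}e^{\kappa t}$ arising from the non-homogeneous part. Dividing through by $e^{\kappa t}$ and using $e^{-\kappa t}\le1$, I would obtain a bound for $\mathbb{E}V(x(t))$ uniform in $t$, of the form $\mathbb{E}V(x(0))+\tfrac{p b_1}{2\kappa}+2^{\frac p2-1}+\tfrac p2 b_3 e^{\kappa\tau}\tau\sup_{-\tau\le s\le0}\mathbb{E}|x(s)|^p$. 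Finally, since $|x(t)|^p\le V(x(t))$ and $\mathbb{E}V(x(0))\le 2^{\frac p2-1}(1+\mathbb{E}|x(0)|^p)$, collecting the constants into a single $C_6$ yields $\sup_{t\ge0}\mathbb{E}|x(t)|^p\le C_6\big(1+\sup_{-\tau\le s\le0}\mathbb{E}|x(s)|^p\big)$.

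The main obstacle is the delay term: the generator estimate involves $\mathbb{E}|x(t-\tau)|^p$, which cannot be dominated pointwise by $\mathbb{E}|x(t)|^p$. The exponential-weight substitution is precisely what converts this into an exact cancellation, and the feasibility of choosing $\kappa>0$ with $\tfrac p2 b_2-\kappa 2^{\frac p2-1}\ge\tfrac p2 b_3e^{\kappa\tau}$ is exactly where the strict inequality $b_2>b_3$ in Assumption \ref{as1-2} enters, since as $\kappa\downarrow0$ the requirement degenerates to $b_2\ge b_3$. A secondary technical point is the localization needed to make the It\^o computation rigorous before the moments are known to be finite, which is handled in the standard way by stopping times and Fatou's lemma.
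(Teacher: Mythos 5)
Your proposal is correct and follows essentially the same route as the paper: It\^o's formula applied to $(1+|x|^2)^{p/2}$ with an exponential weight $e^{\kappa t}$, the reduction of the drift to $\frac p2(b_1-b_2|x|^p+b_3|y|^p)$ via the first inequality of Assumption \ref{as1-2}, the change of variables $s\mapsto s-\tau$ in the delay integral to cancel the $\int_0^t e^{\kappa s}\mathbb{E}|x(s)|^p\,ds$ terms, and the choice of $\kappa$ made possible by $b_2>b_3$. The only substantive difference is that you make the localization by stopping times explicit (the paper takes expectations of the It\^o identity directly), which is a welcome point of rigor rather than a divergence in method.
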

\begin{proof}
    First, let $\lambda_1 > 0$ be the unique root of the following equation 
      \begin{align*}
           b_2- \frac{\lambda_1 2^{\frac{p}{2}+1}}{p} =b_3 e^{\lambda_1 \tau} .
      \end{align*}
      It is not difficult to see that for any $\lambda \in [0,\lambda_1 ]$
      \begin{align}\label{L25-1}
            b_2- \frac{\lambda 2^{\frac{p}{2}+1}}{p} -b_3 e^{\lambda \tau} \leq 0 .
      \end{align}
   By the It$\rm{\hat{o}}$ formula, we can derive that for any $t \geq 0$
    \begin{align*}
        e^{\lambda t}\Big(1+|x(t)|^2\Big)^{\frac{p}{2}} &= \Big(1+|x(0)|^2\Big)^{\frac{p}{2}} + \int_0^t \Big(\lambda e^{\lambda s} \big(1+|x(s)|^2\big)^{\frac{p}{2}}  + \frac{p}{2}e^{\lambda s}\big(1+|x(s)|^2\big)^{\frac{p}{2}-1} \\ \notag
        &\quad \big(2 \langle x(s), f(x(s), x(s-\tau)) \rangle + |g(x(s), x(s-\tau))|^2 + \frac{(p-2)|\langle x(s), g(x(s), x(s-\tau)) \rangle|^2}{1+|x(s)|^2}\big)\Big)ds \\ \notag
        &\quad + \int_0^t p e^{\lambda s}\big(1+|x(s)|^2\big)^{\frac{p}{2}-1} \big\langle x(s), g(x(s), x(s-\tau)) \big \rangle dW(s)
    \end{align*}
    This together with Assumption \ref{as1-2} implies
    \begin{align}\label{L25-2}
          e^{\lambda t}\mathbb{E}\Big(1+|x(t)|^2\Big)^{\frac{p}{2}} &\leq \mathbb{E}\Big(1+|x(0)|^2\Big)^{\frac{p}{2}} + \mathbb{E}\int_0^t \Big(\lambda e^{\lambda s} \big(1+|x(s)|^2\big)^{\frac{p}{2}}  + \frac{p}{2}e^{\lambda s}\big(1+|x(s)|^2\big)^{\frac{p}{2}-1} \\ \notag
        &\quad \big(2 \langle x(s), f(x(s), x(s-\tau)) \rangle + (p-1)|g(x(s), x(s-\tau))|^2\big)\Big)ds \\ \notag
        & \leq  \mathbb{E}\Big(1+|x(0)|^2\Big)^{\frac{p}{2}} + \mathbb{E}\int_0^t \Big(\lambda e^{\lambda s} \big(1+|x(s)|^2\big)^{\frac{p}{2}}  + \frac{p}{2}e^{\lambda s}\big(b_1 - b_2 |x(s)|^p + b_3|x(s-\tau)|^p \Big)ds \\ \notag
        &\leq \mathbb{E}\Big(1+|x(0)|^2\Big)^{\frac{p}{2}} + \mathbb{E}\int_0^t \Big(2^{\frac{p}{2}}\lambda+\frac{pb_1}{2}\Big)e^{\lambda s} ds \\ \notag
        & \quad+ \mathbb{E}\int_0^t \Big(\frac{p}{2}e^{\lambda s}\Big((\frac{\lambda 2^{\frac{p}{2}+1}}{p} - b_2 )|x(s)|^p + b_3|x(s-\tau)|^p \Big)\Big)ds 
    \end{align}
   Note that
    \begin{align}\label{L25-3}
        \mathbb{E}\int_0^t e^{\lambda s }|x(s-\tau)|^p ds &=   e^{\lambda\tau}\mathbb{E}\int_0^t e^{\lambda (s-\tau) }|x(s-\tau)|^p ds  \\ \notag 
        & = e^{\lambda\tau}\mathbb{E}\int_{-\tau}^0 e^{\lambda s}|x(s)|^p ds + e^{\lambda\tau}\mathbb{E}\int_0^t e^{\lambda s }|x(s)|^p ds -e^{\lambda\tau}\mathbb{E}\int_{t-\tau}^t e^{\lambda s }|x(s)|^p ds.
    \end{align}
 Hence, combining \eqref{L25-1}, \eqref{L25-2} and \eqref{L25-3}  together yields 
 \begin{align*}
          &\quad e^{\lambda t}\mathbb{E}\Big(1+|x(t)|^2\Big)^{\frac{p}{2}} \\ \notag
        &\leq \mathbb{E}\Big(1+|x(0)|^2\Big)^{\frac{p}{2}} +  \Big(2^{\frac{p}{2}}+\frac{pb_1}{2\lambda}\Big)\Big(e^{\lambda t} -1\Big) + \mathbb{E}\int_0^t \Big(\frac{p}{2}e^{\lambda s}(\frac{\lambda 2^{\frac{p}{2}+1}}{p} - b_2 +b_3 e^{\lambda \tau})|x(s)|^p \Big)ds \\ \notag
        &\quad  + \frac{pb_3}{2}e^{\lambda \tau} \Big(\mathbb{E}\int_{-\tau}^0 e^{\lambda s}|x(s)|^p ds -\mathbb{E}\int_{t-\tau}^t e^{\lambda s }|x(s)|^p ds\Big)  \\ \notag
         &\leq \mathbb{E}\Big(1+|x(0)|^2\Big)^{\frac{p}{2}} +  \Big(2^{\frac{p}{2}}+\frac{pb_1}{2\lambda}\Big)\Big(e^{\lambda t} -1\Big)  + \frac{pb_3}{2}e^{\lambda \tau} \Big(\mathbb{E}\int_{-\tau}^0 e^{\lambda s}|x(s)|^p ds -\mathbb{E}\int_{t-\tau}^t e^{\lambda s }|x(s)|^p ds\Big),
    \end{align*}
that is
\begin{align}\label{L25-4}
   &\quad \mathbb{E}\Big(1+|x(t)|^2\Big)^{\frac{p}{2}} \\ \notag
    &\leq e^{-\lambda t }\mathbb{E}\Big(1+|x(0)|^2\Big)^{\frac{p}{2}} +  \Big(2^{\frac{p}{2}}+\frac{pb_1}{2\lambda}\Big) + \frac{pb_3}{2}e^{\lambda \tau} \Big(\mathbb{E}\int_{-\tau}^0 e^{\lambda (s-t)}|x(s)|^p ds -\mathbb{E}\int_{t-\tau}^t e^{\lambda (s-t) }|x(s)|^p ds\Big) \\ \notag
    &\leq  e^{-\lambda t }\mathbb{E}\Big(1+|x(0)|^2\Big)^{\frac{p}{2}} +  \Big(2^{\frac{p}{2}}+\frac{pb_1}{2\lambda}\Big) + \frac{pb_3(e^{\lambda \tau}-1)e^{-\lambda t}}{2\lambda}\sup \limits_{ -\tau \leq s \leq 0}\mathbb{E}\Big(|x(s)|^p\Big).
\end{align}
Let 
\begin{align*}
    C_6:=2^{\frac{p}{2}}+ \Big(2^{\frac{p}{2}}+\frac{pb_1}{2\lambda}\Big) + \frac{pb_3(e^{\lambda \tau}-1)}{2\lambda}.
\end{align*}
The required assertion follows.
\end{proof}
In a way similar to the proof of Lemma \ref{L25}, we obtain the following result. 
\begin{lemma}\label{L27}
     Under Assumption \ref{as1-3}, for any two initial values $\xi,\eta \in L^2_{\mathcal{F}_0}([-\tau,0]; \RR^d)$, the corresponding two true solutions $x(t, \xi))$ and $x(t, \eta)$ satisfy
     \begin{align}
        \mathbb{E}\big(|x(t, \xi)-x(t, \eta)|^2\big) 
         &\leq \Big(1+\frac{b_5(e^{\gamma\tau}-1)}{\gamma}\Big)\Big(\sup \limits_{ -\tau \leq s \leq 0}\mathbb{E}\big(|x(s, \xi))-x(s, \eta)|^2\big)\Big)e^{-\gamma t},
     \end{align} 
     where $\gamma$ is a positive constant.
\end{lemma}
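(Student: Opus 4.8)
The plan is to follow the template of Lemma~\ref{L25}, but working with the difference of the two solutions instead of a single solution. Write $e(t):=x(t,\xi)-x(t,\eta)$, and abbreviate the drift and diffusion increments by $\Phi(s):=f(x(s,\xi),x(s-\tau,\xi))-f(x(s,\eta),x(s-\tau,\eta))$ and $\Psi(s):=g(x(s,\xi),x(s-\tau,\xi))-g(x(s,\eta),x(s-\tau,\eta))$, so that $de(t)=\Phi(t)\,dt+\Psi(t)\,dW(t)$ with $e(s)=\xi(s)-\eta(s)$ on $[-\tau,0]$. The key preliminary step is to fix the decay rate $\gamma$: since $b_4>b_5\ge 0$, the function $h(\gamma):=b_4-\gamma-b_5e^{\gamma\tau}$ satisfies $h(0)=b_4-b_5>0$, is strictly decreasing, and tends to $-\infty$, so it has a unique positive root $\gamma_1$; I would take any $\gamma\in(0,\gamma_1]$, for which $(\gamma-b_4)+b_5e^{\gamma\tau}\le 0$. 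This is the exact analogue of the choice of $\lambda_1$ in Lemma~\ref{L25}.

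Next I would apply the It\^o formula to $e^{\gamma t}|e(t)|^2$, obtaining $e^{\gamma t}|e(t)|^2=|e(0)|^2+\int_0^t e^{\gamma s}\big(\gamma|e(s)|^2+2\langle e(s),\Phi(s)\rangle+|\Psi(s)|^2\big)\,ds+2\int_0^t e^{\gamma s}\langle e(s),\Psi(s)\,dW(s)\rangle$. Taking expectations annihilates the martingale term after a standard localisation argument using the square-integrability of the two solutions guaranteed by $\xi,\eta\in L^2_{\mathcal{F}_0}$. The crucial algebraic step is to absorb the diffusion contribution into Assumption~\ref{as1-3}: because $\sigma>1$ and $|\Psi(s)|^2\ge 0$, we have $2\langle e(s),\Phi(s)\rangle+|\Psi(s)|^2\le 2\langle e(s),\Phi(s)\rangle+\sigma|\Psi(s)|^2$, and \eqref{S3-1-3} then bounds the right-hand side by $-b_4|e(s)|^2+b_5|e(s-\tau)|^2$. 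This yields $e^{\gamma t}\mathbb{E}|e(t)|^2\le \mathbb{E}|e(0)|^2+\mathbb{E}\int_0^t e^{\gamma s}\big((\gamma-b_4)|e(s)|^2+b_5|e(s-\tau)|^2\big)\,ds$.

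To close the estimate I would treat the delayed term exactly as in~\eqref{L25-3}: the substitution $u=s-\tau$ gives $\mathbb{E}\int_0^t e^{\gamma s}|e(s-\tau)|^2\,ds=e^{\gamma\tau}\big(\mathbb{E}\int_{-\tau}^0 e^{\gamma u}|e(u)|^2\,du+\mathbb{E}\int_0^t e^{\gamma u}|e(u)|^2\,du-\mathbb{E}\int_{t-\tau}^t e^{\gamma u}|e(u)|^2\,du\big)$. Substituting this back, the coefficient of $\mathbb{E}\int_0^t e^{\gamma u}|e(u)|^2\,du$ becomes $(\gamma-b_4)+b_5e^{\gamma\tau}\le 0$ by the choice of $\gamma$, so that integral is discarded, and the $-b_5e^{\gamma\tau}\mathbb{E}\int_{t-\tau}^t$ piece is nonpositive and dropped as well. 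Writing $S:=\sup_{-\tau\le s\le 0}\mathbb{E}|\xi(s)-\eta(s)|^2$ and using $\mathbb{E}|e(0)|^2\le S$ together with $\mathbb{E}\int_{-\tau}^0 e^{\gamma u}|e(u)|^2\,du\le S\frac{1-e^{-\gamma\tau}}{\gamma}$, I obtain $e^{\gamma t}\mathbb{E}|e(t)|^2\le S\big(1+\frac{b_5(e^{\gamma\tau}-1)}{\gamma}\big)$, which is the claimed bound after multiplying by $e^{-\gamma t}$. I expect the only genuinely delicate points to be the bookkeeping of the delay integral (the time shift and the three resulting pieces) and the verification that the chosen $\gamma$ renders the running coefficient nonpositive; both are already templated by Lemma~\ref{L25}, so the argument should carry over with only cosmetic changes, while the localisation needed to kill the stochastic integral is routine.
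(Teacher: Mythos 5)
Your proposal is correct and follows exactly the route the paper intends: the paper omits the proof of this lemma, stating only that it is obtained "in a way similar to the proof of Lemma \ref{L25}", and your adaptation (It\^o formula applied to $e^{\gamma t}|e(t)|^2$, absorbing $|\Psi|^2$ into Assumption \ref{as1-3} via $\sigma>1$, the three-piece delay-integral shift, and the choice of $\gamma$ as a root of $b_4-\gamma-b_5e^{\gamma\tau}=0$) is precisely that argument and reproduces the stated constant $1+\tfrac{b_5(e^{\gamma\tau}-1)}{\gamma}$.
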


The following proof is similar to Lemma 3.7 in \cite{SHGL2022}, but for the sake of completeness, we present the process as follows. 
   \begin{lemma}\label{L28}
       Under Assumptions \ref{as1-1}, \ref{as1-2} and \ref{as1-3}, for any $T_2 > T_1 \geq 0$, recall the definition $z(t)=z(t;0,\xi)$ and $z_{T_1}:=\{z(T_1+\theta): -\tau\leq \theta \leq 0\}$, then the moment of the error between $x(t;T_1,z_{T_1})$ and $z(t;T_1,z_{T_1})$ satisfies
       \begin{align}
          \sup \limits_{T_1 \leq t \leq T_2} \mathbb{E}|x(t;T_1,z_{T_1}) -z(t;T_1,z_{T_1})|^2 \leq C_{7}^{T_2-T_1} \Big(1+ \sup \limits_{T_1-\tau \leq s \leq T_1}\mathbb{E}\big(|z(s)|^p\big)\Big) \Delta^{1-\nu},
       \end{align}
   \end{lemma}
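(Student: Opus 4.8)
The plan is to treat this as a classical finite-horizon strong-convergence estimate and to exploit time-homogeneity so that the interval $[T_1,T_2]$ may be taken generic. Since $f$ and $g$ are autonomous and $W$ has stationary independent increments, the joint law of $\big(x(T_1+\cdot\,;T_1,z_{T_1}),\,z(T_1+\cdot\,;T_1,z_{T_1})\big)$ agrees with that of $\big(x(\cdot\,;0,z_{T_1}),\,z(\cdot\,;0,z_{T_1})\big)$, and as the assertion only involves moments I would assume without loss of generality that $T_1=0$ and prove the bound on $[0,T]$ with $T:=T_2-T_1$. The decisive simplification is that both the true solution and the numerical interpolation are restarted from the \emph{same} segment $z_{T_1}$, so the error $e(t):=x(t)-z(t)$ vanishes identically on the initial interval $[-\tau,0]$.

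Next I would apply It\^o's formula to $|e(t)|^2$ and, after a standard localization that removes the zero-mean stochastic integral, obtain
$$\mathbb{E}|e(t)|^2=\int_0^t\mathbb{E}\Big(2\langle e(s),\,\Delta f(s)\rangle+|\Delta g(s)|^2\Big)\,ds,$$
where $\Delta f(s)=f(x(s),x(s-\tau))-f(y(s),y(s-\tau))$ and $\Delta g(s)$ is the analogous diffusion difference, $y(\cdot)$ denoting the piecewise-constant grid values that actually enter the scheme. The core device is to insert the continuous interpolation evaluated at $\big(z(s),z(s-\tau)\big)$: write $\Delta f=[f(x,x')-f(z,z')]+[f(z,z')-f(y,y')]$ and, using $\sigma>1$, split $|\Delta g|^2\le \sigma|g(x,x')-g(z,z')|^2+(1+\tfrac1{\sigma-1})|g(z,z')-g(y,y')|^2$. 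The pieces comparing $x$ with $z$ are then in exactly the form required by Assumption \ref{as1-3}, yielding the dissipative bound $-b_4|e(s)|^2+b_5|e(s-\tau)|^2$.

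The remaining ``interpolation remainder'' — the terms comparing $z$ with $y$ together with the cross term $2\langle e(s),f(z,z')-f(y,y')\rangle$ — is where the real work lies. I would bound $|f(z,z')-f(y,y')|$ and $|g(z,z')-g(y,y')|^2$ through the polynomial-Lipschitz estimates of Assumption \ref{as1-1}, apply Young's inequality to the cross term (retaining a small multiple $\epsilon|e(s)|^2$ to be absorbed) so that the interpolation error $|z-y|$ appears \emph{squared}, and then separate that squared factor from the polynomial growth factor by H\"older's inequality. Invoking the interpolation bound \eqref{L23-7}, which already carries the truncation level $\Delta^{-\nu}$ and gives $\big(\mathbb{E}|z-y|^p\big)^{2/p}=O(\Delta^{1-\nu})$, together with the uniform moment bounds of Lemma \ref{L23} and Lemma \ref{L25}, controls this remainder by $C\,\Delta^{1-\nu}\big(1+\sup_{-\tau\le s\le 0}\mathbb{E}|z(s)|^p\big)$.

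Collecting the estimates yields the delay differential inequality
$$\frac{d}{dt}\mathbb{E}|e(t)|^2\le -(b_4-\epsilon)\,\mathbb{E}|e(t)|^2+b_5\,\mathbb{E}|e(t-\tau)|^2+C\,\Delta^{1-\nu}\big(1+\textstyle\sup_{-\tau\le s\le0}\mathbb{E}|z(s)|^p\big),$$
with $e\equiv0$ on $[-\tau,0]$. Since only a finite-horizon bound with an exponentially growing constant is needed, I would discard the negative term, bound $\mathbb{E}|e(t-\tau)|^2$ by $\sup_{0\le s\le t}\mathbb{E}|e(s)|^2$, and close with Gronwall's inequality, producing a constant of the form $C_7^{T_2-T_1}$. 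The principal obstacle is precisely the remainder estimate: reaching the full order $\Delta^{1-\nu}$ rather than the naive $\Delta^{(1-\nu)/2}$ depends on arranging for $|z-y|$ to enter squared, and every polynomial moment raised in the H\"older step must be finite — which is exactly what the restriction $p>4q\vee(3q+8)$ in Assumption \ref{as1-2} and the truncation mapping \eqref{PI} are there to guarantee. The monotonicity step itself is routine once $z$ has been inserted and $\sigma>1$ has been used to absorb the extra diffusion factor.
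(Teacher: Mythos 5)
Your plan for the ``interpolation remainder'' (inserting $(z,z')$ between $(x,x')$ and $(y,y')$, using $\sigma>1$ to absorb the diffusion split, H\"older with $p>4q$ and the bound $(\mathbb{E}|z-y|^p)^{2/p}=O(\Delta^{1-\nu})$ from \eqref{L23-7}, then Gronwall) matches the paper's Step 4 almost exactly. But the opening move --- applying It\^o's formula to $|e(t)|^2$ on all of $[T_1,T_2]$ ``after a standard localization'' --- has a genuine gap. The continuous TEM interpolant $z(t)$ defined by \eqref{CDTSDDE} is \emph{not} an It\^o process across grid points: as $t\uparrow t_{k+1}$ one has $z(t)\to\widetilde{y}_{k+1}$, while $z(t_{k+1})=y_{k+1}=\pi_{\Delta}(\widetilde{y}_{k+1})$, so $z$ jumps at every grid point where the truncation is active. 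Consequently the identity $\mathbb{E}|e(t)|^2=\int_0^t\mathbb{E}\big(2\langle e,\Delta f\rangle+|\Delta g|^2\big)\,ds$ is simply false on the event that truncation occurs; the required localization is not the routine one that kills a martingale term, but the structural one that confines the It\^o computation to $[T_1,\,t\wedge\chi_{\Delta}]$, where $\chi_{\Delta}=\tau_{\Phi^{-1}(K\Delta^{-\nu})}\wedge\rho_{\Delta}\wedge\zeta_{\Delta}$ is the first time any of $\overline{x}$, $\widetilde{y}$, $z$ reaches the truncation level (before which $y_k=\widetilde y_k$ and $z$ is a genuine continuous semimartingale).

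Once you stop at $\chi_{\Delta}$, you still owe an estimate on the complementary event, and this is the second missing piece: the paper's proof rests on the three-term Young decomposition
\begin{equation*}
\mathbb{E}|e(t)|^2\le \mathbb{E}\big(|e(t)|^2\ind_{\{\chi_{\Delta}>T_2\}}\big)+\frac{2\Delta}{p}\,\mathbb{E}|e(t)|^p+\frac{p-2}{p\,\Delta^{2/(p-2)}}\,\PP\{\chi_{\Delta}\le T_2\},
\end{equation*}
where the last term is controlled by the moment bounds of Steps 1--3 via $\PP\{\chi_{\Delta}\le T_2\}=O\big((\Phi^{-1}(K\Delta^{-\nu}))^{-p}\big)$, and the exponent $\nu=(q+2)/(p-2)$ is calibrated precisely so that this term is $O(\Delta)$. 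Your proposal never accounts for the truncation events at all, so it neither justifies the It\^o step nor produces the tail estimate that makes the final rate $\Delta^{1-\nu}$ hold unconditionally. The remainder analysis and the delay-Gronwall closure you describe are correct and coincide with the paper's, but the stopping-time architecture is load-bearing here, not a technicality.
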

   where $\nu$ is a positive constant and $C_{7}^{T_2-T_1}$ is a positive constant dependent on $T_2-T_1$. 
   \begin{proof}
       First, it not difficult to see that $z(t)$ has the following flow property as, e.g., in \cite{Mao2007} or \cite[p.78]{MY2006} 
       \begin{align*}
           z(t)=z(t;0,\xi)=z(t;s,z_s)\quad \quad \forall~ 0 \leq s < t < \infty,
\end{align*}
 provided $s$ is the multiple of $\Delta$. Then, define $\tau_{d}:= \inf \{t\geq T_1: |\overline{x}(t)| \geq d\}$, $\rho_{\Delta}:= \inf \{t\geq T_1: |\widetilde{y}(t)| > \Phi^{-1}(K\Delta^{-\nu})\}$, $\zeta_{\Delta}:= \inf \{t\geq T_1: |z(t)| > \Phi^{-1}(K\Delta^{-\nu})\}$ and $\chi_{\Delta}:= \tau_{\Phi^{-1}(K\Delta^{-\nu})} \wedge \rho_{\Delta} \wedge \zeta_{\Delta}$. For simplicity, let  $\overline{x}(t) :=x(t;T_1, z_{T_1})$ and $e(t):=\overline{x}(t) -z(t)$, by Young's inequality, we derive that for any $t \in [T_1, T_2]$ 
       \begin{align}\label{L28-1}
           \mathbb{E}|e(t)|^2 &=  \mathbb{E}\Big(|e(t)|^2 \boldsymbol{1}_{\{\chi_{\Delta}> T_2\} }\Big) +  \mathbb{E}\Big(|e(t)|^2 \boldsymbol{1}_{\{\chi_{\Delta} \leq  T_2\}}\Big) \\ \notag
           &\leq \mathbb{E}\Big(|e(t)|^2 \boldsymbol{1}_{\{\chi_{\Delta}> T_2\} }\Big) + \frac{2\Delta}{p}  \mathbb{E}|e(t)|^p+ \frac{p-2}{p \Delta^{\frac{2}{p-2}}} \mathbb{P}\{\chi_{\Delta} \leq  T_2\}.
       \end{align}
    Then we divide the proof into three steps.
    \begin{description}
\item[Step 1]  First, following a similar argument as in Lemma \ref{L25}, for any $t \in [T_1, T_2]$, we can easily get the following bounds.
\begin{align}\label{STEP1-1}
   &\quad \mathbb{E}\Big(1+|\overline{x}(t \wedge \tau_{d})|^2\Big)^{\frac{p}{2}} \\ \notag
    &\leq e^{-\lambda (t\wedge \tau_{d})}\mathbb{E}\Big(1+|\overline{x}(T_1)|^2\Big)^{\frac{p}{2}} +  \Big(2^{\frac{p}{2}}+\frac{pb_1}{2\lambda}\Big) \\ \notag
        &\quad+ \frac{pb_3}{2}e^{\lambda \tau} \Big(\mathbb{E}\int_{T_1-\tau}^{T_1} e^{\lambda (s-t\wedge \tau_{d})}|\overline{x}(s)|^p ds -\mathbb{E}\int_{t\wedge \tau_{d}-\tau}^{t\wedge \tau_{d}} e^{\lambda (s-t\wedge \tau_{d}) }|\overline{x}(s)|^p ds\Big) \\ \notag
        &\leq \mathbb{E}\Big(1+|\overline{x}(T_1)|^2\Big)^{\frac{p}{2}} +  \Big(2^{\frac{p}{2}}+\frac{pb_1}{2\lambda}\Big) + \frac{pb_3}{2}e^{\lambda \tau} \mathbb{E}\Big(\int_{T_1-\tau}^{T_1} |\overline{x}(s)|^p ds \Big) \\ \notag 
        &\leq \Big(2^{\frac{p}{2}+1}+\frac{pb_1}{2\lambda}\Big) + \Big(2^{\frac{p}{2}}+\frac{pb_3}{2}e^{\lambda \tau}  \Big) \Big(\sup \limits_{T_1-\tau \leq s \leq T_1}\mathbb{E} |z(s)|^p\Big).
        \end{align}
        Meanwhile, note that for any $d>0$
        \begin{align*}
            \mathbb{P}\{\tau_d \leq  T_2\}d^p \leq \mathbb{E}\Big(d^p \boldsymbol{1}_{\{\tau_d \leq  T_2\}}\Big)  \leq \mathbb{E}\Big(|\overline{x}(T_2 \wedge\tau_d)|^p \boldsymbol{1}_{\{\tau_d \leq  T_2\}}\Big) \leq \mathbb{E}\Big(1+|\overline{x}(T_2 \wedge \tau_{d})|^2\Big)^{\frac{p}{2}},
        \end{align*}
        that is
        \begin{align}\label{STEP1-2}
             \mathbb{P}\{\tau_d \leq  T_2\} \leq \frac{\mathbb{E}\Big(1+|\overline{x}(T_2 \wedge \tau_{d})|^2\Big)^{\frac{p}{2}}}{d^p}.
        \end{align}
        
 \item[Step 2] \label{step2}
      By employing the techniques used in the proofs of  Lemmas 3.2 in \cite{SHGL2022} and \eqref{STEP1-2}, we define two positive integers, $N_1$ and $N_2$, such that $N_1=T_1/\Delta$ and $N_2=T_2/\Delta$. Additionally, we also define $\beta_{\Delta} := \inf \{k \geq N_1 : |\widetilde{y}_k| \geq \Phi^{-1}(K\Delta^{-\nu})\}$. Then, for any $k \geq N_1$, it follows that
         \begin{align*}
           &\quad \mathbb{E}\Big( (1+|\widetilde{y}_{k\wedge\beta_{\Delta}}|^2)^{\frac{p}{2}} \Big| ~\mathcal{F}_{t_{(k-1)\wedge \beta_{\Delta}}}\Big)  \\ \notag 
           & \leq  \Big(1+|\widetilde{y}_{(k-1)\wedge\beta_{\Delta}}|^2\Big)^{\frac{p}{2}}+\Big(- \kappa \Delta\big(1+|\widetilde{y}_{(k-1)\wedge\beta_{\Delta}}|^2\big)^{\frac{p}{2}} + \frac{p\Delta}{2} \big(b_1  + 2\kappa - (b_2 - 2^{\frac{p}{2}+1}\kappa) |\overline{y}_{(k-1)\wedge\beta_{\Delta}}|^p \\ \notag
           &\quad+ b_3 |\overline{y}_{(k-1-M)\wedge\beta_{\Delta}}|^p \big)\Big)\mathbb{E}\Big( \boldsymbol{1}_{[N_1,~ \beta_{\Delta}]} (k)\Big| ~\mathcal{F}_{t_{(k-1)\wedge \beta_{\Delta}}}\Big),
       \end{align*}
       this implies
      \begin{align*}
             &\quad \mathbb{E}\Big (1+|\widetilde{y}_{k\wedge\beta_{\Delta}}|^2\Big)^{\frac{p}{2}}  \\ \notag 
             &\leq \mathbb{E}(1+|y_{N_1}|^2)^{\frac{p}{2}} + \frac{p(b_1+2\kappa)(k-N_1)\Delta}{2} \\ \notag
             &\quad-\frac{p(b_2 - 2^{\frac{p}{2}+1}\kappa)\Delta}{2}\sum \limits_{i=N_1}^{k-1}\mathbb{E}\Big[ |\overline{y}_{i\wedge\beta_{\Delta}}|^p\mathbb{E}\Big( \boldsymbol{1}_{[N_1,~ \beta_{\Delta}]} (i+1)\Big| ~\mathcal{F}_{t_{i\wedge \beta_{\Delta}}}\Big)\Big] \\ \notag
             &\quad + \frac{pb_3\Delta}{2}\sum \limits_{i=N_1}^{k-1-M}\mathbb{E}\Big[ |\overline{y}_{i\wedge\beta_{\Delta}}|^p\mathbb{E}\Big( \boldsymbol{1}_{[N_1,~ \beta_{\Delta}]} (i+1+M)\Big| ~\mathcal{F}_{t_{(i+M)\wedge \beta_{\Delta}}}\Big)\Big] \\ \notag
             &\quad + \frac{pb_3\Delta}{2}\sum \limits_{i=N_1-M}^{N_1-1}\mathbb{E}(|y_i|^p) .
      \end{align*}
      Due to the facts $\boldsymbol{1}_{[N_1,~ \beta_{\Delta}]} (i+1+M) \leq \boldsymbol{1}_{[N_1,~ \beta_{\Delta}]} (i+1)$, the inequality $(a+b)^p\leq 2^pa^p+2^pb^p$ and \eqref{L23-1}, we have
 \begin{align}\label{STEP2-1}
             &\quad \mathbb{E}\Big (1+|\widetilde{y}_{k\wedge\beta_{\Delta}}|^2\Big)^{\frac{p}{2}}   \\ \notag 
             &\leq \mathbb{E}(1+|y_{N_1}|^2)^{\frac{p}{2}} + \frac{p(b_1+2\kappa)(k-N_1)\Delta}{2}  + \frac{pb_3\tau}{2}\sup \limits_{ N_1-M \leq i \leq N_1-1}\mathbb{E}\Big(|y_{i}|^p\Big) \\ \notag
             &\leq \Big(2^{\frac{p}{2}}+\frac{p(b_1+2\kappa)(k-N_1)\Delta}{2}\Big) + \Big(2^{\frac{p}{2}} +\frac{pb_3\tau}{2}\Big)\sup \limits_{ T_1-\tau \leq s \leq T_1}\mathbb{E}\Big(|z(s)|^p\Big).
      \end{align}
       Meanwhile, note that
       \begin{align*}
          \Big(  \Phi^{-1}(K\Delta^{-\nu}) \Big)^p \mathbb{P}\{\rho_d \leq  T_2\} \leq \mathbb{E}\Big(|\widetilde{y}(T_2 \wedge \rho_{\Delta})|^p\boldsymbol{1}_{\{\rho_{\Delta} \leq  T_2\}}\Big)\leq \mathbb{E}\Big (1+|\widetilde{y}_{_{N_2\wedge\beta_{\Delta}}}|^2\Big)^{\frac{p}{2}} ,
       \end{align*}
       that is
        \begin{align}\label{STEP2-2}
            \mathbb{P}\{\rho_d \leq  T_2\} \leq \frac{\mathbb{E}\Big (1+|\widetilde{y}_{_{N_2\wedge\beta_{\Delta}}}|^2\Big)^{\frac{p}{2}} }{\Big(\Phi^{-1}(K\Delta^{-\nu})\Big)^p}.
        \end{align}
        \item[Step 3] \label{step3}
        As for $z(t)$, by \eqref{CDTSDDE2} and \eqref{CDTSDDE}, let 
            \begin{align*}
               F(t) = f(y(t), y(t-\tau)),\quad G(t) = g(y(t), y(t-\tau)),
            \end{align*}
        then we can see that for any $t \in [T_1, \zeta_{\Delta}]$
        \begin{align*}
            z(t)= z(T_1) + \int_{T_1}^t F(s)ds + \int_{T_1}^t G(s)dW(s).
        \end{align*}
Applying the  It$\rm{\hat{o}}$ formula to the above equation yields
        \begin{align}\label{STEP3-37}
            &\quad e^{\alpha (t \wedge \zeta_{\Delta})} \mathbb{E}\Big(1+|z(t \wedge \zeta_{\Delta})|^2\Big)^{\frac{p}{2}} \\ \notag
       &\leq \mathbb{E}\Big(1+|z(T_1)|^2\Big)^{\frac{p}{2}} + \mathbb{E}\int_{T_1}^{t \wedge \zeta_{\Delta}} \Big(\alpha e^{\alpha s} \big(1+|z(s)|^2\big)^{\frac{p}{2}}  + \frac{p}{2}e^{\alpha s}\big(1+|z(s)|^2\big)^{\frac{p}{2}-1} \\ \notag
       &\quad \times\big(2 \langle z(s), F(s) \rangle + (p-1)|G(s)|^2\big)\Big)ds \\ \notag 
        & \leq  \mathbb{E}\Big(1+|z(T_1)|^2\Big)^{\frac{p}{2}} + \mathbb{E}\int_{T_1}^{t \wedge \zeta_{\Delta}} \Big(\alpha e^{\alpha s} \big(1+|z(s)|^2\big)^{\frac{p}{2}}  + \frac{p}{2}e^{\alpha s}\big(1+|z(s)|^2\big)^{\frac{p}{2}-1} \\ \notag
      & \quad \times\big(2\langle z(s)-y(s), F(s) \rangle+ 2\langle y(s), F(s) \rangle + (p-1)|G(s)|^2\big) \Big)ds \\ \notag
        & \leq \mathbb{E}\Big(1+|z(T_1)|^2\Big)^{\frac{p}{2}} + \mathbb{E}\int_{T_1}^{t \wedge \zeta_{\Delta}} \Big(\alpha e^{\alpha s} \big(1+|z(s)|^2\big)^{\frac{p}{2}}  + \frac{p}{2}e^{\alpha s}\big(1+|z(s)|^2\big)^{\frac{p}{2}-1} \\ \notag 
       & \quad \times \big(2|z(s)-y(s)||F(s)|+ \overline{b_1}+ \overline{b_3}|y(s-\tau)|^2\big) \Big)ds \\ \notag
       & \leq \mathbb{E}\Big(1+|z(T_1)|^2\Big)^{\frac{p}{2}} + \mathbb{E}\int_{T_1}^{t \wedge \zeta_{\Delta}} e^{\alpha s} \Big( \big(\alpha + \frac{p-2}{2}\big)\big(1+|z(s)|^2\big)^{\frac{p}{2}} + \big(2^{\frac{3p}{2}}|z(s)-y(s)|^{\frac{p}{2}} |F(s)|^{\frac{p}{2}}\big) \\ \notag
       & \quad + \big(2^p\overline{b}_1^{\frac{p}{2}}+2^p\overline{b}_3^{\frac{p}{2}}|y(s-\tau)|^p\big)\Big)ds \\ \notag
       & := \mathbb{E}\Big(1+|z(T_1)|^2\Big)^{\frac{p}{2}} + \int_{T_1}^{t \wedge \zeta_{\Delta}} e^{\alpha s} \Big(J_1+J_2+J_3 \Big)ds
        \end{align}
       Subsequently we will estimate $J_1$, $J_2$ and $J_3$ separately. Similar to the proof of \eqref{L23-6}, for any $k \in [N_1, N_2]$, we have
       \begin{align*}
           \mathbb{E}\Big(|y_k|^p\Big) &\leq \mathbb{E}\Big(1+|y_k|^2\Big)^{\frac{p}{2}} \\ \notag
           &\leq \mathbb{E}\Big(1+|y_{{N_1}}|^2\Big)^{\frac{p}{2}}+ \frac{p}{2\kappa}\Big(b_1+2^{\frac{p}{2}+1}\kappa\Big)+ \frac{pb_3e^{2\kappa \tau}}{2\kappa}\Big( \sup \limits_{N_1-M \leq i \leq N_1}\mathbb{E}\big(|y_i|^p\big)\Big)
       \end{align*} 
       This implies that for any $k \in [N_1-M, N_2]$
       \begin{align}\label{STEP3-38}
           \mathbb{E}\Big(|y_k|^p\Big) &\leq \mathbb{E}\Big(1+|y_k|^2\Big)^{\frac{p}{2}} \\ \notag
           & \leq 2^{\frac{p}{2}} + \frac{p}{2\kappa}\Big(b_1+2^{\frac{p}{2}+1}\kappa\Big) + \Big(2^{\frac{p}{2}}+ \frac{pb_3e^{2\kappa \tau}}{2\kappa}\Big)\Big(\sup \limits_{N_1-M \leq i \leq N_1} \mathbb{E}(|y_i|^p)\Big)
       \end{align}
       Then
       \begin{align}\label{STEP3-39}
           J_1&: =(\alpha + \frac{p-2}{2})\mathbb{E}\Big(1+|z(s)|^2\Big)^{\frac{p}{2}}  \\ \notag
           & \leq (\alpha + \frac{p-2}{2}) \mathbb{E}\Big(1+2|z(s)-y(s)|^2+2|y(s)|^2\Big)^{\frac{p}{2}} \\ \notag
           & \leq 2^p (\alpha+ \frac{p-2}{2})\Big(1+\mathbb{E}(|z(s)-y(s)|^p)+ \mathbb{E}(|y(s)|^p)\Big)
       \end{align}
       Note that from \eqref{L23-7}, we know that for any $s \in [t_k,t_{k+1}]$
        \begin{align}\label{STEP3-40}
           & \quad \mathbb{E}\Big(|z(s) - y(s)|^p\Big) \leq C_5 (1+\mathbb{E}|y_k|^p) \Delta^{\frac{p(1-\upsilon)}{2}},
       \end{align}
       Thus
       \begin{align}\label{STEP3-41}
           J_1 \leq 2^p\big(\alpha + \frac{p-2}{2}\big)\big((C_5+1)(1+\mathbb{E}(|y_k|^p))\big)
       \end{align}
       Next, by employing \eqref{CON}, \eqref{L23-7}, \eqref{STEP3-41}, the H$\rm{\ddot{o}}$lder and Young inequalities, and recalling that $\upsilon \in (0, \frac{1}{3}]$, one can derive that for any $s \in [t_k, t_{k+1}]$
       \begin{align}\label{STEP3-42}
           J_2 &=2^{\frac{3p}{2}}\mathbb{E}\Big(|z(s)-y(s)|^{\frac{p}{2}}|F(s)|^{\frac{p}{2}}\Big) \\ \notag
         & \leq 2^{\frac{3p}{2}}\Big(\mathbb{E}(|z(s)-y(s)|^p)\Big)^{\frac{1}{2}}\Big(\mathbb{E}(|F(s)|^p)\Big)^{\frac{1}{2}} \\ \notag
         & \leq 2^{\frac{3p}{2}}K^{\frac{p}{2}}\Big(C_5\big(1+\mathbb{E}|y_k|^p\big)\Delta^{\frac{p(1-\upsilon)}{2}}\Big)^{\frac{1}{2}} \Big(\mathbb{E}\big(1+|y(s)|\big)^p \Delta^{-p \upsilon}\Big)^{\frac{1}{2}} \\ \notag
         & \leq 2^{\frac{3p}{2}-1}K^{\frac{p}{2}}\Big(C_5(1+\mathbb{E}|y_k|^p)+ \mathbb{E}\big(1+|y(s)|\big)^p\Big) \\ \notag
         & \leq 2^{\frac{3p}{2}-1}K^{\frac{p}{2}}\Big(C_5+2^p+ (C_5+2^p)\mathbb{E}|y_k|^p\Big)
       \end{align}
      As for $J_3$, it is obvious that
      \begin{align}\label{STEP3-43}
          J_3 = 2^p \overline{b}_1^{\frac{p}{2}}+ 2^p \overline{b}_3^{\frac{p}{2}}\mathbb{E}(|y(s-\tau)|^p)
      \end{align}
      Combining \eqref{STEP3-37}, \eqref{STEP3-38}, \eqref{STEP3-40}, \eqref{STEP3-41}, \eqref{STEP3-42} and \eqref{STEP3-43} together, it is not difficult to derive that
       \begin{align}\label{STEP3-7}
        \mathbb{E}\Big(1+|z(t \wedge \zeta_{\Delta})|^2\Big)^{\frac{p}{2}}&\leq C_8^{T_2-T_1}\Big(1+ \sup \limits_{ T_1-\tau \leq s \leq T_1}\mathbb{E}\big(|y(s)|^p\big)\Big) \\ \notag
        &\leq C_8^{T_2-T_1}\Big(1+ \sup \limits_{ T_1-\tau \leq s \leq T_1}\mathbb{E}\big(|z(s)|^p\big)\Big),
     \end{align}
     where $C_8^{T_2-T_1}$ is a positive constant dependent on $p, \alpha, C_5, \kappa, K, b_1, b_3,\overline{b}_1,\overline{b}_3, \tau, T_2-T_1$. 
 Meanwhile, note that
       \begin{align*}
          \Big(  \Phi^{-1}(K\Delta^{-\nu}) \Big)^p \mathbb{P}\{\zeta_{\Delta} \leq  T_2\} \leq \mathbb{E}\Big(|z(T_2 \wedge \zeta_{\Delta})|^p\boldsymbol{1}_{\{\zeta_{\Delta} \leq  T_2\}}\Big)\leq \mathbb{E}\Big (1+|z(T_2 \wedge \zeta_{\Delta})|^2\Big)^{\frac{p}{2}} ,
       \end{align*}
       that is
        \begin{align}\label{STEP3-8}
            \mathbb{P}\{\zeta_{\Delta} \leq T_2\} \leq \frac{\mathbb{E}\Big (1+|z(T_2 \wedge \zeta_{\Delta})|^2\Big)^{\frac{p}{2}}}{\Big(\Phi^{-1}(K\Delta^{-\nu})\Big)^p}.
        \end{align}
    \item[Step 4] In a way similar to the proof of Lemmas \ref{L23} and \ref{L25}, it is easy to see that for any $t \in [T_1, T_2]$
    \begin{align}\label{STEP4-1}
        \frac{2\Delta}{p}  \mathbb{E}|e(t)|^p \leq \frac{2^{p+1}\Delta}{p} \big(\mathbb{E}|\overline{x}(t)|^p + \mathbb{E}|z(t)|^p \big) \leq C_9 \Big(1+ \sup \limits_{ T_1-\tau \leq s \leq T_1}\mathbb{E}\big(|z(s)|^p\big)\Big) \Delta,
    \end{align}
    where $C_9$ is a positive constant dependent on $C_3, C_6$ and $p$. Then by \eqref{STEP1-1}, \eqref{STEP1-2}, \eqref{STEP2-1}, \eqref{STEP2-2}, \eqref{STEP3-7}, \eqref{STEP3-8}, \eqref{PHI-1} and the previous setting $\nu :=\frac{q+2}{p-2}$, we get
    \begin{align}\label{STEP4-2}
        \frac{p-2}{p \Delta^{\frac{2}{p-2}}} \mathbb{P}\{\chi_{\Delta} &\leq  T_2\}\leq   \frac{p-2}{p \Delta^{\frac{2}{p-2}}} \Big(\mathbb{P}\{\tau_{\Phi^{-1}(K\Delta^{-\nu})}\leq  T_2\}+ \mathbb{P}\{\rho_{\Delta} \leq  T_2\}+ \mathbb{P}\{\zeta_{\Delta} \leq  T_2\}\Big) \\ \notag 
        &\leq  \frac{p-2}{p \Delta^{\frac{2}{p-2}}} \frac{C_{10}^{T_2-T_1}}{\big(\Phi^{-1}(K\Delta^{-\nu})\big)^p}  \Big(1+ \sup \limits_{ T_1-\tau \leq s \leq T_1}\mathbb{E}\big(|z(s)|^p\big)\Big) \\ \notag
        &\leq C_{11}^{T_2-T_1} \Big(1+ \sup \limits_{ T_1-\tau \leq s \leq  T_1}\mathbb{E}\big(|z(s)|^p\big)\Big)\Delta^{\frac{p}{p-2}-\frac{2}{p-2}} \\ \notag
        &= C_{11}^{T_2-T_1} \Big(1+ \sup \limits_{ T_1-\tau \leq s \leq  T_1}\mathbb{E}\big(|z(s)|^p\big)\Big)\Delta,
    \end{align}
    where $C_{10}^{T_2-T_1}$ and $C_{11}^{T_2-T_1}$ are a pair of constants dependent on $p, b_1, b_3, \kappa, \lambda, \tau, C_8^{T_2-T_1} $. Next, employing the It$\rm{\hat{o}}$ formula and Assumption \ref{as1-3}, we get that for any $t \in [T_1, T_2]$
    \begin{align}\label{STEP4-3}
       &\quad \Big|e(t \wedge \chi_{\Delta})\Big|^2 \\ \notag
        &\leq \int_{T_1}^{t \wedge \chi_{\Delta}}\Big(2\langle e(s), f(\overline{x}(s), \overline{x}(s-\tau)) - f(y(s), y(s-\tau)) \rangle + |g(\overline{x}(s), \overline{x}(s-\tau))-g(y(s), y(s-\tau))|^2\Big)ds \\ \notag
        &\quad +  \int_{T_1}^{t \wedge \chi_{\Delta}} \Big(2\langle e(s), g(\overline{x}(s), \overline{x}(s-\tau)) - g(y(s), y(s-\tau)) \rangle \Big)dW(s). 
    \end{align}
   Let $\overline{\kappa} := \sigma -1$, then by the Young inequality and \eqref{S3-1-1}, we have 
   \begin{align}\label{STEP4-4}
     &\quad  2\langle e(s), f(\overline{x}(s), \overline{x}(s-\tau)) - f(y(s), y(s-\tau)) \rangle + |g(\overline{x}(s), \overline{x}(s-\tau))-g(y(s), y(s-\tau))|^2 \\ \notag
     &\leq 2\langle e(s), f(\overline{x}(s), \overline{x}(s-\tau)) - f(z(s), z(s-\tau)) \rangle + 2\langle e(s), f(z(s), z(s-\tau)) - f(y(s), y(s-\tau)) \rangle \\ \notag
     &\quad + (1+ \overline{\kappa}) |g(\overline{x}(s), \overline{x}(s-\tau))-g(z(s), z(s-\tau))|^2 + (1+\frac{1}{\overline{\kappa}}) |g(z(s), z(s-\tau))-g(y(s), y(s-\tau))|^2 \\ \notag
     &\leq -b_4 |e(s)|^2 + b_5|e(s- \tau)|^2 + 2 a_1 |e(s)|\Big(|z(s)-y(s)| + |z(s-\tau)-y(s-\tau)|\Big) \Big(1+ |z(s)|^q \\ \notag
     &\quad + |z(s-\tau)|^q +|y(s)|^q+ |y(s-\tau)|^q\Big)+ a_1 (1+\frac{1}{\overline{\kappa}}) \Big(|z(s)-y(s)|^2 + |z(s-\tau)-y(s-\tau)|^2\Big) \\ \notag
     &\quad\times\Big(1 + |z(s)|^q + |z(s-\tau)|^q +|y(s)|^q+ |y(s-\tau)|^q\Big).
   \end{align}
   Inserting \eqref{STEP4-4} into \eqref{STEP4-3} and using the fact $z(t)=\overline{x}(t)$ for any $t \in [T_1-\tau, T_1]$, we can derive that
   \begin{align}\label{STEP4-5}
        &\quad \mathbb{E}\Big(\Big|e(t \wedge \chi_{\Delta})\Big|^2 \Big)\\ \notag
        &\leq \mathbb{E}\int_{T_1}^{t \wedge \chi_{\Delta}} 2a_1|e(s)|^2 + a_1|z(s)-y(s)|^2(1 + |z(s)|^q + |z(s-\tau)|^q +|y(s)|^q+ |y(s-\tau)|^q)^2 \\ \notag
        &\quad+ a_1|z(s-\tau)-y(s-\tau)|^2(1 + |z(s)|^q + |z(s-\tau)|^q +|y(s)|^q+ |y(s-\tau)|^q)^2 \\ \notag
        &\quad + a_1 (1+\frac{1}{\overline{\kappa}}) |z(s)-y(s)|^2  \Big(1 + |z(s)|^q + |z(s-\tau)|^q +|y(s)|^q+ |y(s-\tau)|^q\Big) \\ \notag
        &\quad +  a_1 (1+\frac{1}{\overline{\kappa}}) |z(s-\tau)-y(s-\tau)|^2  \Big(1 + |z(s)|^q + |z(s-\tau)|^q +|y(s)|^q+ |y(s-\tau)|^q\Big) ds\\ \notag
        &\leq 2a_1  \mathbb{E}\int_{T_1}^{t \wedge \chi_{\Delta}} |e(s)|^2 ds + a_1 (2+\frac{1}{\overline{\kappa}}) \mathbb{E}\int_{T_1}^{t \wedge \chi_{\Delta}} |z(s)-y(s)|^2(1 + |z(s)|^q + |z(s-\tau)|^q +|y(s)|^q \\ \notag
        &\quad+ |y(s-\tau)|^q)^2 ds + a_1 (2+\frac{1}{\overline{\kappa}}) \mathbb{E}\int_{T_1}^{t \wedge \chi_{\Delta}} |z(s-\tau)-y(s-\tau)|^2(1 + |z(s)|^q + |z(s-\tau)|^q +|y(s)|^q \\ \notag
        &\quad+ |y(s-\tau)|^q)^2 ds  \\ \notag
        &:= 2a_1  \mathbb{E}\int_{T_1}^{t \wedge \chi_{\Delta}} |e(s)|^2 ds  +J_1+J_2.
   \end{align}
   Using the H$\rm{\ddot{o}}$lder inequality and the Young inequality, it follows from \eqref{L23-7}, \eqref{L23-8} and \eqref{STEP3-38} that
   \begin{align}\label{STEP4-6}
       J_1  
       &\leq  a_1 (2+\frac{1}{\overline{\kappa}}) \mathbb{E}\int_{T}^{T_2} |z(s)-y(s)|^2(1 + |z(s)|^q + |z(s-\tau)|^q +|y(s)|^q + |y(s-\tau)|^q)^2 ds \\ \notag 
       &\leq  25a_1 (2+\frac{1}{\overline{\kappa}}) \int_{T_1}^{T_2} \Big(\mathbb{E}|z(s)-y(s)|^4\Big)^{\frac{1}{2}}\Big(\mathbb{E}(1 + |z(s)|^{4q} + |z(s-\tau)|^{4q} +|y(s)|^{4q} + |y(s-\tau)|^{4q}) \Big)^{\frac{1}{2}}ds\\ \notag 
       &\leq 25a_1 (2+\frac{1}{\overline{\kappa}}) \int_{T_1}^{T_2} \Big(\mathbb{E}|z(s)-y(s)|^p\Big)^{\frac{2}{p}}\Big(\mathbb{E}(1 + |z(s)|^{4q} + |z(s-\tau)|^{4q} +|y(s)|^{4q} + |y(s-\tau)|^{4q}) \Big)^{\frac{1}{2}}ds\\ \notag 
       &\leq 25a_1C_5^{\frac{2}{p}} (2+\frac{1}{\overline{\kappa}})\Delta^{1-\nu} \int_{T_1}^{T_2} \Big(1+\mathbb{E}|y(s)|^p\Big)^{\frac{2}{p}}\Big(\mathbb{E}(1 + |z(s)|^{4q} + |z(s-\tau)|^{4q} +|y(s)|^{4q}  \\ \notag
       &\quad+ |y(s-\tau)|^{4q}) \Big)^{\frac{1}{2}}ds\\ \notag 
     &\leq \frac{25}{2} a_1C_5^{\frac{2}{p}} (2+\frac{1}{\overline{\kappa}})\Delta^{1-\nu} \int_{T_1}^{T_2} \Big((1+\mathbb{E}|y(s)|^p)^{\frac{4}{p}}+ 1 + \mathbb{E}|z(s)|^{4q} + \mathbb{E}|z(s-\tau)|^{4q} +\mathbb{E}|y(s)|^{4q}  \\ \notag
       &\quad+ \mathbb{E}|y(s-\tau)|^{4q} \Big)ds\\ \notag 
     & \leq \frac{25}{2} a_1C_5^{\frac{2}{p}} (2+\frac{1}{\overline{\kappa}})\Delta^{1-\nu}  \int_{T_1}^{T_2} \Big((2+  \frac{4(p-4q)}{p}) + \frac{4}{p} \mathbb{E}|y(s)|^p+ \frac{4q}{p} (\mathbb{E}|z(s)|^{p} + \mathbb{E}|z(s-\tau)|^{p} \\ \notag
     &\quad+\mathbb{E}|y(s)|^{p} + \mathbb{E}|y(s-\tau)|^{p} )\Big)ds \\ \notag
     &\leq C_{12}^{T_2-T_1} \Big(1+ \sup \limits_{ T_1-\tau \leq s \leq T_1}\mathbb{E}\big(|z(s)|^p\big)\Big)\Delta^{1-\nu},
   \end{align}
   where $C_{12}^{T_2-T_1}$ is a positive constant dependent on $a_1, p, q, \overline{\kappa},\tau, T, C_3$ and $C_5$. Similarly, by assumption \ref{as1-4},  we can also get
   \begin{align}\label{STEP4-7}
     J_2 
      &\leq a_1 (2+\frac{1}{\overline{\kappa}}) \mathbb{E}\int_{T_1}^{T_2} |z(s-\tau)-y(s-\tau)|^2(1 + |z(s)|^q + |z(s-\tau)|^q +|y(s)|^q \\ \notag
        &\quad+ |y(s-\tau)|^q)^2 ds  \\ \notag
       &\leq 25a_1 (2+\frac{1}{\overline{\kappa}}) \int_{T_1}^{T_2} \Big(\mathbb{E}|z(s-\tau)-y(s-\tau)|^p\Big)^{\frac{2}{p}}\Big(\mathbb{E}(1 + |z(s)|^{4q} + |z(s-\tau)|^{4q} +|y(s)|^{4q} \\ \notag
       &\quad + |y(s-\tau)|^{4q}) \Big)^{\frac{1}{2}}ds\\ \notag 
       &\leq 25a_1C_5^{\frac{2}{p}} (2+\frac{1}{\overline{\kappa}})\Delta^{1-\nu} \int_{T_1}^{T_2} \Big(1+\mathbb{E}|y(s-\tau)|^p\Big)^{\frac{2}{p}}\Big(\mathbb{E}(1 + |z(s)|^{4q} + |z(s-\tau)|^{4q} +|y(s)|^{4q}  \\ \notag
       &\quad+ |y(s-\tau)|^{4q}) \Big)^{\frac{1}{2}}ds  \\ \notag
       &\leq C_{13}^{T_2-T_1} \Big(1+ \sup \limits_{ T_1-\tau \leq s \leq T_1}\mathbb{E}\big(|z(s)|^p\big)\Big)\Delta^{1-\nu},
   \end{align}
    where $C_{13}^{T_2-T_1}$ is a positive constant dependent on $a_1, p, q, \overline{\kappa},\tau, T_2-T_1, K_1, C_3$ and $C_5$. Inserting \eqref{STEP4-6} and \eqref{STEP4-7} into \eqref{STEP4-5} and using the Gronwall inequality yields
    \begin{align}\label{STEP4-8}
         \mathbb{E}\Big(\Big|e(t \wedge \chi_{\Delta})\Big|^2 \Big)\leq \Big(C_{12}^{T_2-T_1}+C_{13}^{T_2-T_1}\Big)e^{4a_1(T_2-T_1)} \Big(1+ \sup \limits_{ T_1-\tau \leq s \leq T_1}\mathbb{E}\big(|z(s)|^p\big)\Big)\Delta^{1-\nu}
    \end{align}
    Meanwhile, note that for any $t \in [T_1, T_2]$
    \begin{align}\label{STEP4-9}
        \mathbb{E}\Big(|e(t)|^2 \boldsymbol{1}_{\{\chi_{\Delta}> T_2\} }\Big) \leq  \mathbb{E}\Big(\Big|e(t \wedge \chi_{\Delta})\Big|^2 \Big)
    \end{align}
    Thus, inserting \eqref{STEP4-1}, \eqref{STEP4-2}, \eqref{STEP4-8} and \eqref{STEP4-9} into \eqref{L28-1} gives
 \begin{align}\label{STEP4-10}
           \mathbb{E}|e(t)|^2 
           &\leq \mathbb{E}\Big(|e(t)|^2 \boldsymbol{1}_{\{\chi_{\Delta}> T_2\} }\Big) + \frac{2\Delta}{p}  \mathbb{E}|e(t)|^q+ \frac{p-2}{p \Delta^{\frac{2}{p-2}}} \mathbb{P}\{\chi_{\Delta} \leq  T_2\} \\ \notag
           &\leq \Big(C_{12}^{T_2-T_1}+C_{13}^{T_2-T_1}\Big)e^{4a_1(T_2-T_1)} \Big(1+ \sup \limits_{ T-\tau \leq s \leq T}\mathbb{E}\big(|z(s)|^p\big)\Big)\Delta^{1-\nu} \\ \notag
           &\quad+ \Big(C_9+ C_{11}^{T_2-T_1}\Big) \Big(1+ \sup \limits_{ T_1-\tau \leq s \leq T_1}\mathbb{E}\big(|z(s)|^p\big)\Big) \Delta \\ \notag
           &\leq C_2^{T_2-T_1} \Big(1+ \sup \limits_{ T-\tau \leq s \leq T}\mathbb{E}\big(|z(s)|^p\big)\Big) \Delta^{1-\nu},
       \end{align}
       where $C_7^{T_2-T_1} :=\Big((C_{12}^{T_2-T_1}+C_{13}^{T_2-T_1})e^{2a_1(T_2-T_1)} + C_9+ C_{11}^{T_2-T_1}\Big)\Big(1+C_{\tau}\Big)$. 
       The proof is complete. 
\end{description}
   \end{proof}
   From Lemmas \ref{L23}, \ref{L27}, and \ref{L28}, we can see that Condition \ref{con1} hold, then by Theorem \ref{T--2}, we can conclude this part by the following theorems.
\begin{theorem}
    Under Assumption \ref{as1-1}-\ref{as1-4}, the numerical solution $z(t;0 \xi)$ converges strongly to the true solution $x(t,;0\xi)$ in the infinite horizon, i.e.,
    \begin{align*}
          \sup \limits_{ t \geq 0} \mathbb{E}|x(t;0,\xi) -z(t;0,\xi)|^2 \leq C \Delta^{1-\nu},
    \end{align*}
    where $C$ is a positive constant independent of $t$.
\end{theorem}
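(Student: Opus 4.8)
My plan is to read this theorem as a concrete instantiation of the abstract infinite-horizon result, Theorem \ref{T--2}, so that the entire argument reduces to verifying that the truncated Euler--Maruyama scheme satisfies each of the four items of Condition \ref{con1}. Here the continuous scheme $z(t;0,\xi)$ from \eqref{CDTSDDE} plays the role of the generic numerical solution $X(t;0,\xi)$, and I would fix the exponents as $p=2$ (the order in the error and attraction estimates), $q=1-\nu$, and $r=p$ (the moment order coming from Assumption \ref{as1-2}). Once the four items are checked, Theorem \ref{T--2} immediately yields the time-uniform bound $\sup_{t\ge0}\mathbb{E}|x(t;0,\xi)-z(t;0,\xi)|^2\le C\Delta^{1-\nu}$ with $C$ independent of $t$.

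The verification I would carry out matches each condition to an already-established lemma. Item (i), that $\{Z_{t_k}\}_{k\ge0}$ is a time-homogeneous Markov chain, is the Markov-chain lemma proved for $z(t)$ right after \eqref{CDTSDDE}. Item (ii), infinite-horizon moment boundedness of the numerical solution, is Lemma \ref{L23}, which gives $\sup_{t\ge0}\mathbb{E}|z(t)|^p\le C_3\big(1+\sup_{-M\le i\le0}\mathbb{E}|y_i|^p\big)$, so the generic $r$ is taken to be $p$. The asymptotic-attraction item is Lemma \ref{L27}, furnishing the $L^2$ exponential contraction of two true solutions started from different data. The finite-time error item is Lemma \ref{L28}, which on any $[T_1,T_2]$ supplies $\sup_{T_1\le t\le T_2}\mathbb{E}|x(t;T_1,z_{T_1})-z(t;T_1,z_{T_1})|^2\le C_7^{T_2-T_1}\big(1+\sup_{T_1-\tau\le s\le T_1}\mathbb{E}|z(s)|^p\big)\Delta^{1-\nu}$, i.e. the fourth item with $q=1-\nu$.

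The one point I would take care over in the assembly is the bookkeeping of moment orders and the uniform-in-$T_1$ control of the prefactor. Since attraction and error are in $L^2$ while the moment bound is in $L^p$, this is the ``first version'' of the condition discussed after Condition \ref{con19}: mismatched orders are allowed provided the numerical moments are bounded, which is exactly what Lemma \ref{L23} supplies. The crucial consequence is that $\sup_{T_1-\tau\le s\le T_1}\mathbb{E}|z(s)|^p$ is bounded by a constant independent of $T_1$, so the prefactor $C_7^{T_2-T_1}(1+\cdots)$ in Lemma \ref{L28} stays uniformly bounded across all the length-$T$ subintervals into which Theorem \ref{T--2} slices $[0,\infty)$; this is precisely what prevents the error from accumulating and lets the decay from Lemma \ref{L27} dominate.

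Finally, I expect the real difficulty not to lie in this final step at all, but to have already been absorbed into Lemma \ref{L28}: its proof demands the stopping-time truncation $\chi_\Delta$, three separate moment estimates for $\overline x$, $\widetilde y$ and $z$ run up to that stopping time, and a Gronwall closing step. Taking that lemma as given, the present theorem follows essentially for free from Theorem \ref{T--2}; the only genuine obligations remaining are to confirm the exponents match and that the time-homogeneity invoked in Condition \ref{con1} is legitimate through the flow property $z(t;0,\xi)=z(t;s,z_s)$ valid at grid points $s$.
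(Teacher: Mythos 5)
Your proposal is correct and follows exactly the route the paper takes: the paper's own proof is the single sentence that Lemmas \ref{L23}, \ref{L27}, and \ref{L28} (together with the Markov-chain lemma for $\{Z_{t_k}\}$) verify Condition \ref{con1}, after which Theorem \ref{T--2} gives the time-uniform bound. Your additional remarks on matching the exponents ($p=2$, $q=1-\nu$, $r=p$) and on the mismatched moment orders being permissible in the ``first version'' of the condition are accurate and, if anything, more careful than the paper's one-line justification.
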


\section{Backward Euler-Maruyama method}
In the previous section, to show the applicability of Theorem \ref{T--2}, we used an explicit method as an example. Similarly, Theorem \ref{T-22} also requires an example to illustrate its applicability. It is worth noting that when solving non-stiff SDEs, the explicit method performs very well. However, when dealing with stiff SDEs, the implicit method usually faces severe step size limits due to stability considerations, which will cause the overall computational cost to become expensive. In this case, the implicit method with better stability properties will be a better choice, see \cite{Hu1996, MPS1998,RVB2010} for more details. Thus, in this section, to show the generality of our technique, we will consider the Backward Euler-Maruyama method under the following assumptions.

\begin{assumption}\label{as1}
For any  $x,~\bar{x},~y,~\bar y\in \RR ^d $ , there exists a positive
constant $a_1 $ such that
\begin{align*}
  | f(x, y)-f(\bar{x},\bar  y)|^2\vee | g(x, y)-g(\bar{x}, \bar y)|^2 \leq a_1 (| x-\bar {x}|^2+| y-\bar {y}|^2).
 \end{align*}
\end{assumption}
\begin{assumption}\label{as2}
   There exist nonnegative constants  $b_1,~b_2,~b_3, ~b_4$ and $b_5$ with $b_1>b_2,~b_3>b_4$,  such that
\begin{align*}
2\big\langle x-\bar {x} ,f(x,y)-f(\bar {x},\bar y)\big\rangle&+| g(x,y)-g(\bar {x},\bar{y})| ^2
\leq -b_1| x-\bar {x}|^2+b_2| y-\bar {y}| ^2 \\
2\big \langle x, f(x,y) \big \rangle & + |g(x,y)|^2 \leq -b_3|x|^2+b_4|y|^2 + b_5
\end{align*}

for any $x,~\bar x,~y,~\bar y\in\RR^d$.
\end{assumption}
\begin{assumption} \label{as3}
     The initial value is Holder continuous, i.e., there exist a positive constant $K_1$ such that for any $-\tau \leq s < t \leq 0$
     \begin{align}
         \mathbb{E}|\xi(t)-\xi(s)|^2 \leq K_1(t-s).
     \end{align}
\end{assumption}
From Assumption \ref{as1}, for any $x,~y \in\RR^d $, one can see that
\begin{equation}\label{LG}
    |f(x,y)|^2 \vee |g(x,y)|^2 \leq c_1 (|x|^2+|y|^2) +c_2,
\end{equation}
where $x_1 = 2a_1, c_2=2(|f(0,0)|^2 \vee |g(0,0)|^2)$.

It should be pointed out that under Assumption \ref{as1} and \ref{as2},  SDDE \eqref{SDDE} with the initial data $\xi\in\mathcal C_{\mathcal F_0}^{p}$ has a unique global solution $x(t)$ for $t\geq -\tau$  (see \cite[Theorem 2.4]{MR2005} or \cite[p.278, Theorem 7.12]{MY2006} for details).

Now, we define the backward Euler-Maruyama approximate solution for Eq. \ref{SDDE} as follows.
\begin{align}\label{DSDDE}
    \begin{split}
  \left \{
 \begin{array}{ll}
X_k=\xi(t_k), \quad\quad\quad\quad k=-M,-M+1,...,0,\\
 X_k=X_{k-1}+f(X_k, X_{k-M})\Delta+ g(X_{k-1}, X_{k-M-1})\Delta W_{k-1}, \quad \quad k=1,2,...,
 \end{array}
 \right.
 \end{split}
 \end{align}
 where $\Delta W_{k-1} := W(k\Delta)-W((k-1)\Delta)$.
And the continuous backward Euler-Maruyama approximate solution is defined by
\begin{align}\label{CDSDDE}
    \begin{split}
  \left \{
 \begin{array}{ll}
\overline{X}(t)=\xi(t), \quad\quad\quad\quad -\tau \leq t \leq 0,\\
\overline{X}(t)=\xi(0)+\int_{0}^{t}f(X_1(s), X_{1}(s-\tau)) ds+ \int_{0}^{t}g(X_2(s), X_{2}(s-\tau))dW(s), \quad  t\geq 0,
 \end{array}
 \right.
 \end{split}
 \end{align}
 where $X_1(t) := \sum \limits_{k=0}^{\infty} X_{k+1} \boldsymbol{1}_{[kh, (k+1)h)}(t)$ and $X_2(t) := \sum \limits_{k=0}^{\infty} X_{k} \boldsymbol{1}_{[kh, (k+1)h)}(t)$ for any $k \geq -M$. Similar to the true solution of Eq.\eqref{SDDE}, denote $\overline{X}_{t}:=\{\overline{X}(t+\theta) : -\tau \leq \theta \leq 0\}$, and sometimes to emphasize the initial value $\xi$ at $t=0$, we also denote them by $X_{k}^{0, \xi}$ and $\overline{X}_{t}^{0, \xi}$.

 \begin{assumption}\label{as4}
     Suppose that for any fixed $\Delta \in (0, \Delta^*] \subset (0,1]$, there exist two functions $G_{\Delta}: \mathbb{R}^d \to \mathbb{R}^d$ and $F_{\Delta}: \mathbb{R}^d\to \mathbb{R}^d$, such that for any $x,y \in \mathbb{R}^d$, $x-f(x,y)\Delta$ can be expressed as $G_{\Delta}(x)-F_{\Delta}(y )$, and $G_{\Delta}$ has its inverse function $G_{\Delta}^{-1}: \mathbb{R}^d \to \mathbb{R}^d$.
 \end{assumption}
 Thus, under Assumption \ref{as4}, for any $k > 0$, \eqref{DSDDE} can be represented as
 \begin{align}\label{GI}
     X_k = G_{\Delta}^{-1}\Big(F_{\Delta}(X_{k-M}) + X_{k-1}+ g(X_{k-1}, X_{k-M-1})\Delta W_{k-1}\Big).
 \end{align}
Next, let $\mathcal{F}_t:= \sigma (W(u), 0 \leq u \leq t) \wedge \mathcal{N}$ for $t > 0$, where $\mathcal{N}$ is the set of all $\mathbb{P}$-null sets. With this definition, we can proceed to the following lemma to show that the first condition in Condition \ref{con19} is satisfied. 
 \begin{lemma}\label{L--5}
 Under Assumption \ref{as4}, $\{\overline{X}_{t_k}\}_{k \geq 0}$ is a homogeneous Markov chain, i.e., for any $B \in \mathfrak{B}(C)$ and $\xi \in C$
  \begin{align}\label{TH}
      \mathbb{P}\big(~\overline{X}_{t_{k+1}} \in B |~\overline{X}_{t_k} = \xi  \big)= \mathbb{P}\big(~\overline{X}_{t_1} \in B |~\overline{X}_{0}= \xi \big),
  \end{align}
  and 
  \begin{align}\label{MP}
       \mathbb{P}\big(~\overline{X}_{t_{k+1}} \in B |~\mathcal{F}_{t_k} \big)= \mathbb{P}\big(~\overline{X}_{t_{k+1}} \in B |~\overline{X}_{t_k} \big).
  \end{align}
 \end{lemma}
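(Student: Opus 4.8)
The plan is to exhibit a single measurable update map $\Psi$, independent of the index $k$, such that
\begin{align*}
\overline{X}_{t_{k+1}}=\Psi\big(\overline{X}_{t_k},\,\{W(t_k+u)-W(t_k):u\in[0,\Delta]\}\big),
\end{align*}
and then to read off both \eqref{MP} and \eqref{TH} from the independence of the driving increments from $\mathcal{F}_{t_k}$. This mirrors the argument for the TEM scheme referenced from \cite[Lemma 5.1]{BSY-2023}: the point is that the segment, being a window of length $\tau=M\Delta$, carries exactly the memory the one-step recursion needs.

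First I would record that the continuous scheme agrees with the discrete one at grid points, namely $\overline{X}(t_j)=X_j$ for all $j\geq 0$, by telescoping the stochastic and Lebesgue integrals in \eqref{CDSDDE} and comparing with \eqref{DSDDE}. Consequently, viewing $\overline{X}_{t_k}$ as an element of $C$, its evaluations at $\theta=-j\Delta$ for $0\leq j\leq M$ recover the values $X_{k-M},X_{k-M+1},\dots,X_k$, and since $\tau=M\Delta$ these are precisely the data required to advance the scheme by one step.

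Next I would make the update explicit. For $t\in[t_k,t_{k+1})$ the definition \eqref{CDSDDE} yields the closed form
\begin{align*}
\overline{X}(t)=X_k+f(X_{k+1},X_{k+1-M})(t-t_k)+g(X_k,X_{k-M})\big(W(t)-W(t_k)\big),
\end{align*}
and by Assumption \ref{as4} the implicit value $X_{k+1}$ is resolved through \eqref{GI} as
\begin{align*}
X_{k+1}=G_{\Delta}^{-1}\Big(F_{\Delta}(X_{k+1-M})+X_k+g(X_k,X_{k-M})\big(W(t_{k+1})-W(t_k)\big)\Big).
\end{align*}
All of $X_k$, $X_{k-M}$, $X_{k+1-M}$ are read off from $\overline{X}_{t_k}$, so the new arc of path on $[t_k,t_{k+1}]$ is a measurable functional of $\overline{X}_{t_k}$ and the increments $\{W(t_k+u)-W(t_k):u\in[0,\Delta]\}$; the rest of $\overline{X}_{t_{k+1}}$, on $[t_{k+1}-\tau,t_k]$, is just a shift of $\overline{X}_{t_k}$. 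This constructs $\Psi$, and crucially $\Psi$ carries no dependence on $k$, since neither the recursion coefficients nor $G_\Delta^{-1}$ and $F_\Delta$ do.

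Finally I would close with the independence (freezing) lemma. The segment $\overline{X}_{t_k}$ is $\mathcal{F}_{t_k}$-measurable, while $\{W(t_k+u)-W(t_k):u\in[0,\Delta]\}$ is independent of $\mathcal{F}_{t_k}$; hence $\mathbb{P}(\overline{X}_{t_{k+1}}\in B\mid\mathcal{F}_{t_k})=h(\overline{X}_{t_k})$, where $h(\xi)=\mathbb{P}(\Psi(\xi,\{W(t_k+u)-W(t_k)\})\in B)$. Since this depends on $\mathcal{F}_{t_k}$ only through $\overline{X}_{t_k}$, it equals $\mathbb{P}(\overline{X}_{t_{k+1}}\in B\mid\overline{X}_{t_k})$, giving \eqref{MP}. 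For \eqref{TH} I would invoke the stationarity of Brownian increments, so that the law of $\{W(t_k+u)-W(t_k)\}$ matches that of $\{W(u)-W(0)\}$; because $\Psi$ is $k$-independent, $h(\xi)=\mathbb{P}(\overline{X}_{t_1}\in B\mid\overline{X}_0=\xi)$. I expect the main obstacle to be the careful bookkeeping that $\overline{X}_{t_k}$ encodes exactly the window $[t_k-\tau,t_k]$—so that nothing outside the current segment is consulted—and the attendant measurability of $\Psi$; the invertibility granted by Assumption \ref{as4} is precisely what turns the implicit step into a single-valued, measurable map.
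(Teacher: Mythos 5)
Your proposal is correct and follows essentially the same route as the paper's proof: both express $\overline{X}_{t_{k+1}}$ as a $k$-independent measurable functional of $\overline{X}_{t_k}$ and the Brownian increments over $[t_k,t_{k+1}]$ (using Assumption \ref{as4} to resolve the implicit step through $G_{\Delta}^{-1}$ and $F_{\Delta}$), then obtain \eqref{TH} from the stationarity of the increments and \eqref{MP} from the independence of those increments from $\mathcal{F}_{t_k}$ via the classical freezing lemma.
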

 \begin{proof}
    For any $\theta \in [t_i, t_{i+1}], i = -M, -M+1, ... ,-1$, it follows from \eqref{CDSDDE} and \eqref{GI} that
    \begin{align*}
    \begin{split}
 &\quad\overline{X}(t_1 +\theta) \\
 & =\left \{
 \begin{array}{ll}
G_{t_1+\theta }^{-1}\Big(F_{t_1+\theta }(\overline{X}(t_{1-M})) + \overline{X}(0)+ g(\overline{X}(t_{0}), \overline{X}(t_{-M})(W(t_1+\theta)-W(0))\Big), \quad \theta \in [t_{-1}, 0]\\
\overline{X}(t_1 + \theta), \quad \quad \quad \theta \in [t_i, t_{i+1}], ~i \neq -1,
 \end{array}
 \right. \\
 & =\left \{
 \begin{array}{ll}
G_{t_1+\theta }^{-1}\Big(F_{t_1+\theta }(\xi(t_{1-M})) + \xi(0)+ g(\xi(t_{0}), \xi(t_{-M})(W(t_1+\theta)-W(0))\Big), \quad \theta \in [t_{-1}, 0]\\
\xi(t_1 + \theta), \quad \quad \quad \theta \in [t_i, t_{i+1}], ~i \neq -1,
 \end{array}
 \right. 
 \end{split}
 \end{align*}
  and
  \begin{align*}
    \begin{split}
 &\quad\overline{X}(t_{k+1} +\theta) \\
& =\left \{
 \begin{array}{ll}
G_{t_1+\theta }^{-1}\Big(F_{t_1+\theta }(\overline{X}(t_{k+1-M})) + \overline{X}(t_k)+ g(\overline{X}(t_{k}), \overline{X}(t_{k-M})(W(t_{k+1}+\theta)-W(t_k))\Big), \quad \theta \in [t_{-1}, 0]\\
\overline{X}(t_{k+1} + \theta), \quad \quad \quad \theta \in [t_i, t_{i+1}], ~i \neq -1,
 \end{array}
 \right. \\ 
  & =\left \{
 \begin{array}{ll}
G_{t_1+\theta }^{-1}\Big(F_{t_1+\theta }(\xi(t_{1-M})) + \xi(0)+ g(\xi(t_{0}), \xi(t_{-M})(W(t_{k+1}+\theta)-W(t_k))\Big), \quad \theta \in [t_{-1}, 0]\\
\xi(t_1 + \theta), \quad \quad \quad \theta \in [t_i, t_{i+1}], ~i \neq -1,
 \end{array}
 \right. 
 \end{split}
 \end{align*}
 Note that $W(t_1+\theta)-W(0)$ and $W(t_{k+1}+\theta)-W(t_k)$ are identical in probability law. Comparing the two equations above, we know that $\overline{X}_{t_1}$ and $\overline{X}_{t_{k+1}}$ are equal in probability law under $\overline{X}_{t_k} =\overline{X}_{0} = \xi$. Thus, \eqref{TH} hold.

 Next, we will show that \eqref{MP} is true as well. Set
  \begin{align}
    \begin{split}
Y_{t_{k+1}}^{\eta}(\theta)  =\left \{
 \begin{array}{ll}
G_{t_1+\theta }^{-1}\Big(F_{t_1+\theta }(\eta(t_{1-M})) + \eta(0)+ g(\eta(t_{0}), \eta(t_{-M})(W(t_{k+1}+\theta)-W(t_k))\Big), \quad \theta \in [t_{-1}, 0]\\
\eta(t_1 + \theta), \quad \quad \quad \theta \in [t_i, t_{i+1}], ~i \neq -1,
\end{array}
 \right.
 \end{split}
 \end{align}
 and $Y_{t_{k+1}}^{\eta}=\{Y_{t_{k+1}}^{\eta}(\theta) : -\tau \leq \theta \leq 0\}\} $, It is easy to see that $\overline{X}_{t_{k+1}} = Y_{t_{k+1}}^{\overline{X}_{t_k}}$. For any $0 \leq s \leq t$, let $\mathcal{G}_{t,s} = \sigma (W(u)-W(s), s\leq u \leq t) \wedge \mathcal{N}$. Obviously, $\mathcal{G}_{t_{k+1},t_k}$ is independent of $\mathcal{F}_{t_k}$. Moreover, $Y_{t_{k+1}}^{\eta}$ depends completely on the increment $W(t_{k+1}) - W(t_k)$, so is $\mathcal{G}_{t_{k+1},t_k}-$ measurable. Thus, $Y_{t_{k+1}}^{\eta}$ is independent of $\mathcal{F}_{t_k}$. Then by applying a classical result (see, for example, lemma 9.2 on p.87 of \cite{M2007}) yields
 \begin{align*}
    \mathbb{P}(~\overline{X}_{t_{k+1}} \in B |~\mathcal{F}_{t_k}) &= \mathbb{E} \big(\boldsymbol{1}_B(Y_{t_{k+1}}^{\overline{X}_{t_k}})\big|~\mathcal{F}_{t_k}\big) = \mathbb{E} \big(\boldsymbol{1}_B(Y_{t_{k+1}}^{\eta)}\big)\big|_{\eta =\overline{X}_{t_k}} \\
    &=\mathbb{P}\big(Y_{t_{k+1}}^{\overline{X}_{t_k}} \in B\big)\big|_{\eta =\overline{X}_{t_k}}
    = \mathbb{P}\big(~\overline{X}_{t_{k+1}} \in B |~\overline{X}_{t_k} \big)
 \end{align*}
 The proof is complete.
 \end{proof}
 \subsection{The second and third conditions in Condition \ref{con19}}
 In the following, we will analyze the second and third conditions described in Condition \ref{con19}. The first is the moment boundedness of the true segment process $\{x_t^{0,\xi}\}_{t\geq 0}$.
 \begin{lemma}\label{L1}
     Let Assumptions \ref{as1}  and \ref{as2} hold, then for any $t > 0$, the segment process $\{x_t^{0,\xi}\}_{t \geq 0}$ satisfies
    \begin{equation*}
         \mathbb{E} \|x_t^{0, \xi} \|^2 \leq C_1 \mathbb{E} \|\xi \|^2 e^{-\lambda t} + C_2,
    \end{equation*}
    where $C_1, C_2, \lambda$ are positive constants.
 \end{lemma}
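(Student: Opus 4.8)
The plan is to prove the bound in two stages: first a pointwise second-moment estimate $\mathbb{E}|x(t)|^2 \le \widehat C_1\mathbb{E}\|\xi\|^2 e^{-\lambda t} + \widehat C_2$ carrying exponential decay, and then an upgrade to the supremum norm of the segment $x_t$ by means of a maximal inequality.

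For the pointwise stage I would apply the It\^o formula to $e^{\lambda s}|x(s)|^2$ for a parameter $\lambda>0$ to be fixed, take expectations to kill the martingale, and use the second inequality of Assumption \ref{as2} to bound $2\langle x(s),f\rangle + |g|^2 \le -b_3|x(s)|^2 + b_4|x(s-\tau)|^2 + b_5$. This yields
\begin{align*}
e^{\lambda t}\mathbb{E}|x(t)|^2 \le \mathbb{E}|x(0)|^2 + \mathbb{E}\int_0^t e^{\lambda s}\Big((\lambda - b_3)|x(s)|^2 + b_4|x(s-\tau)|^2 + b_5\Big)\,ds .
\end{align*}
The delay integral is rewritten through $\mathbb{E}\int_0^t e^{\lambda s}|x(s-\tau)|^2\,ds = e^{\lambda\tau}\mathbb{E}\int_{-\tau}^{t-\tau} e^{\lambda s}|x(s)|^2\,ds$ and split into an initial piece on $[-\tau,0]$ plus a running piece, exactly as in the proof of Lemma \ref{L25}. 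Because $b_3>b_4$, I can choose $\lambda$ small enough that $\lambda - b_3 + b_4 e^{\lambda\tau}\le 0$; this cancels the running $\mathbb{E}|x(s)|^2$ integrals and leaves $\mathbb{E}|x(t)|^2 \le \widehat C_1\mathbb{E}\|\xi\|^2 e^{-\lambda t} + \widehat C_2$ for all $t\ge 0$. Since $e^{-\lambda u}\ge 1$ for $u\le 0$, the same inequality trivially persists for $u\in[-\tau,0]$, where $x(u)=\xi(u)$.

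For the segment stage, fix $t\ge\tau$ and write $\mathbb{E}\|x_t\|^2 = \mathbb{E}\sup_{t-\tau\le s\le t}|x(s)|^2$. Representing $x(s)=x(t-\tau)+\int_{t-\tau}^s f\,du + \int_{t-\tau}^s g\,dW$ and using $(a+b+c)^2\le 3a^2+3b^2+3c^2$, I bound the drift contribution by the Cauchy--Schwarz inequality and the diffusion contribution by Doob's $L^2$ maximal inequality, obtaining
\begin{align*}
\mathbb{E}\sup_{t-\tau\le s\le t}|x(s)|^2 \le 3\,\mathbb{E}|x(t-\tau)|^2 + 3\tau\,\mathbb{E}\int_{t-\tau}^t |f|^2\,du + 12\,\mathbb{E}\int_{t-\tau}^t |g|^2\,du .
\end{align*}
The linear growth bound \eqref{LG} converts the two integrals into integrals of $\mathbb{E}|x(u)|^2$ and $\mathbb{E}|x(u-\tau)|^2$ over $u\in[t-\tau,t]$ plus a constant; feeding in the pointwise estimate (whose arguments all lie in $[t-2\tau,t]$, contributing a factor at most $e^{2\lambda\tau}e^{-\lambda t}$) gives $\mathbb{E}\|x_t\|^2 \le C_1\mathbb{E}\|\xi\|^2 e^{-\lambda t} + C_2$, as claimed.

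The main obstacle is this second stage: pulling the supremum inside the expectation for the stochastic integral forces the use of a maximal (or BDG) inequality, and one must verify that the decay factor $e^{-\lambda t}$ genuinely survives rather than being absorbed into the $\tau$-length window. A minor point is the range $0\le t<\tau$, where the window $[t-\tau,t]$ straddles the initial interval; there the overlapping part is bounded directly by $\|\xi\|^2$ (using $x(s)=\xi(s)$, together with Assumption \ref{as3} if pathwise regularity is needed) and the remainder is treated by the same maximal estimate, so the conclusion is unchanged.
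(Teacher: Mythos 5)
Your proposal is correct and follows essentially the same two-stage route as the paper: an exponentially weighted It\^o estimate with the delay-integral splitting to get the pointwise decay bound, followed by a maximal-inequality upgrade over the window $[t-\tau,t]$ using the linear growth bound \eqref{LG} and the pointwise estimate. The only cosmetic difference is in the second stage, where you use the integral representation with Doob's $L^2$ inequality and Cauchy--Schwarz while the paper applies It\^o again and controls the stochastic integral via Burkholder--Davis--Gundy with a Young-inequality absorption of the supremum; your explicit handling of the range $0\le t<\tau$ and of the $e^{\lambda\tau}$ factor in the delay substitution is, if anything, slightly more careful than the paper's.
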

 \begin{proof}
     First, let $\lambda :=b_3- b_4$, then by the It$\rm{\hat{o}}$ formula, we have
     \begin{equation}\label{L1-1}
         \begin{aligned}
             e^{\lambda t}|x(t)|^2&=|\xi(0)|^2 + \int_0^t \lambda e^{\lambda s}|x(s)|^2 + e^{\lambda s}\big(2\langle x(s), f(x(s), x(s-\tau))\rangle + |g(x(s), x(s-\tau))|^2\big) ds \\
             & \quad + 2\int_0^t e^{\lambda s}\langle x(s), g(x(s), x(s-\tau)) \rangle dW(s).
         \end{aligned}
     \end{equation}
     Taking expectations on both sides of \eqref{L1-1} and applying Assumption \ref{as2} yields
     \begin{align}\label{L1-2}
           \mathbb{E}\Big(e^{\lambda t}|x(t)|^2\Big)&\leq\mathbb{E}|\xi(0)|^2 + \mathbb{E}\int_0^t \lambda e^{\lambda s}|x(s)|^2 + e^{\lambda s}\big(-b_3|x(s)|^2 + b_4|x(s-\tau)|^2 +b_5\big) ds  \\ \notag
           & = \mathbb{E}|\xi(0)|^2 + \mathbb{E}\int_0^t (\lambda-b_3+b_4) e^{\lambda s}|x(s)|^2ds + b_4\mathbb{E}\int_{-\tau}^0 e^{\lambda s} |x(s)|^2ds \\ \notag
           &\quad- b_4\mathbb{E}\int_{t-\tau}^t  e^{\lambda s} |x(s)|^2ds +b_5\int_0^t  e^{\lambda s} ds
     \end{align}
     Since $\lambda =b_3- b_4$, this implies that
     \begin{align}\label{L1-3}
          \mathbb{E}\Big(|x(t)|^2\Big)
           & \leq  e^{-\lambda t}\mathbb{E}|\xi(0)|^2 + \frac{b_4 \tau e^{-\lambda t}}{\lambda}\mathbb{E} \|\xi \|^2+ \frac{b_5}{\lambda} \\ \notag
           & \leq  e^{-\lambda t} \Big(1+\frac{b_4 \tau}{\lambda}\Big)\mathbb{E} \|\xi \|^2+ \frac{b_5}{\lambda}.
     \end{align}
   By using the It$\rm{\hat{o}}$ formula again, we can see that for any $t > \tau$ and $\theta \in [0, \tau]$
     \begin{align}\label{L1-4}
         |x(t-\theta)|^2 &=|x(t-\tau)|^2 + \int_{t-\tau}^{t-\theta } \big(2\langle x(s), f(x(s), x(s-\tau))\rangle + |g(x(s), x(s-\tau))|^2\big) ds \\ \notag
         &\quad + 2 \int_{t-\tau}^{t-\theta} \langle x(s), g(x(s), x(s-\tau)) \rangle dW(s).
     \end{align}
     Then, by the Burkholder-Davis-Gundy and Young inequalities, we get
     \begin{align}\label{L1-5}
         &\quad \mathbb{E} \Bigg(\sup \limits_{0 \leq \theta \leq \tau} \Big|\int_{t-\tau}^{t-\theta} \langle x(s), g(x(s), x(s-\tau)) \rangle dW(s)\Big| \Bigg)\\ \notag
         & \leq \mathbb{E} \Bigg(\Big( \sup \limits_{0 \leq \theta \leq \tau} |x(t-\theta)|\Big)\Big(\sup \limits_{0 \leq \theta \leq \tau}\int_{t-\tau}^{t-\theta}|g(x(s), x(s-\tau))| dW(s)\Big)\Bigg) \\ \notag
         &\leq  \frac{1}{4}\mathbb{E} \Big( \sup \limits_{0 \leq \theta \leq \tau} |x(t-\theta)|^2\Big)+ \mathbb{E}\Big(\sup \limits_{0 \leq \theta \leq \tau}\Big|\int_{t-\tau}^{t-\theta}|g(x(s), x(s-\tau))| dW(s)\Big|^2\Big) \\ \notag
          &\leq  \frac{1}{4}\mathbb{E} \Big( \sup \limits_{0 \leq \theta \leq \tau} |x(t-\theta)|^2\Big)+ 4\mathbb{E}\Big(\int_{t-\tau}^{t}|g(x(s), x(s-\tau))|^2 ds\Big).
     \end{align}
     Substituting \eqref{L1-5} into \eqref{L1-4} and it follows from Assumptions \ref{as1} and \ref{as2}  that
     \begin{align}
        &\quad \mathbb{E} \Big( \sup \limits_{0 \leq \theta \leq \tau} |x(t-\theta)|^2\Big) \\ \notag
         &\leq 2\mathbb{E} |x(t-\tau)|^2 + 2\mathbb{E} \Big( \sup \limits_{0 \leq \theta \leq \tau}\int_{t-\tau}^{t-\theta } \big(-b_3|x(s)|^2 + b_4|x(s-\tau)|^2+b_5\big) ds \Big) \\ \notag
         &\quad +8\mathbb{E}\Big(\int_{t-\tau}^{t}\big(c_1(|x(s)|^2+|x(s-\tau)|^2) +c_2 \big) ds\Big) \\ \notag
         &\leq 2\mathbb{E} |x(t-\tau)|^2+8c_1\mathbb{E}\Big(\int_{t-\tau}^{t}|x(s)|^2 ds\Big) +(2b_4+8)\mathbb{E}\Big(\int_{t-\tau}^{t}|x(s-\tau)|^2 \big) ds\Big) + 2\tau(b_5+4c_2)
     \end{align}
     This, together with \eqref{L1-3} implies that
    \begin{equation}
        \begin{aligned}
             &\quad \mathbb{E} \Big( \sup \limits_{0 \leq \theta \leq \tau} |x(t-\theta)|^2\Big) \\ \notag
         &\leq e^{-\lambda (t-\tau)} \Big(2+\frac{2b_4 \tau}{\lambda}\Big)\mathbb{E} \|\xi \|^2+ \frac{2b_5}{\lambda}
         + 8c_1 \Big(1+\frac{b_4 \tau}{\lambda}\Big)\mathbb{E} \|\xi \|^2 \frac{e^{-\lambda t}(e^{\lambda \tau}-1)}{\lambda}+\frac{8c_1b_5\tau}{\lambda} \\ \notag
         &\quad + (2b_4+8) \Big(1+\frac{b_4 \tau}{\lambda}\Big)\mathbb{E} \|\xi \|^2 \frac{e^{-\lambda t}(e^{2\lambda \tau}-e^{\lambda \tau})}{\lambda}+\frac{(2b_4+8)b_5\tau}{\lambda} + 2\tau (b_5+4c_2).
        \end{aligned}
    \end{equation}
     Let 
     \begin{align}
         C_1&:= \Big(2e^{\lambda \tau} + \frac{8c_1(e^{\lambda \tau}-1)}{\lambda} + \frac{(2b_4+8)(e^{2\lambda \tau}-e^{\lambda \tau})}{\lambda} \Big)\Big(1+ \frac{b_4\tau}{\lambda}\Big),\\
         C_2&:= \frac{2b_5}{\lambda} +\frac{8c_1b_5\tau}{\lambda} +\frac{(2b_4+8)b_5\tau}{\lambda} + 2\tau (b_5+4c_2).
     \end{align}
     This completes the proof. 
 \end{proof}

Similar to the proof of Lemma \ref{L1}, we can also establish asymptotic attraction of $\{x_t^{0,\xi}\}_{t\geq 0}$. We state the result as follows and omit the proof.
 \begin{lemma}\label{L-6}
    Suppose that Assumptions \ref{as1}  and \ref{as2} hold, then for any $t > 0$, the segment processes associated with SDDE \ref{SDDE} satisfy
    \begin{equation*}
         \mathbb{E} \|x_t^{0, \xi}-x_t^{0,\eta} \|^2 \leq C_3 \mathbb{E} \|\xi -\eta\|^2 e^{-\epsilon t} ,
    \end{equation*}
    where $C_3, \epsilon$ are positive constants and $\xi, \eta \in C_{\mathcal F_0}^{2}$.
    \end{lemma}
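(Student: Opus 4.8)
The plan is to mirror the two-stage argument used for Lemma \ref{L1}, but to replace the dissipativity estimate for a single solution (the second inequality of Assumption \ref{as2}) by the one-sided Lipschitz estimate for the difference of two solutions (the first inequality of Assumption \ref{as2}). Write $x(t) := x(t;0,\xi)$, $\bar{x}(t) := x(t;0,\eta)$, and set $e(t) := x(t) - \bar{x}(t)$; subtracting the two copies of \eqref{SDDE} shows that $e$ solves
\begin{align*}
 de(t) = \big(f(x(t),x(t-\tau)) - f(\bar{x}(t),\bar{x}(t-\tau))\big)dt + \big(g(x(t),x(t-\tau)) - g(\bar{x}(t),\bar{x}(t-\tau))\big)dW(t),
\end{align*}
with $e(s) = \xi(s) - \eta(s)$ on $[-\tau,0]$. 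I would choose $\lambda := b_1 - b_2 > 0$, which is positive by the standing hypothesis $b_1 > b_2$ in Assumption \ref{as2}, playing the role that $b_3 - b_4$ played in Lemma \ref{L1}.

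First I would derive a pointwise decay estimate for $\mathbb{E}|e(t)|^2$. Applying the It\^o formula to $e^{\lambda t}|e(t)|^2$, taking expectations (the stochastic integral is a genuine martingale under the Lipschitz bound and the finite second moments, so it drops out), and invoking the first inequality of Assumption \ref{as2} yields
\begin{align*}
 \mathbb{E}\big(e^{\lambda t}|e(t)|^2\big) \leq \mathbb{E}|e(0)|^2 + \mathbb{E}\int_0^t e^{\lambda s}\Big((\lambda - b_1)|e(s)|^2 + b_2|e(s-\tau)|^2\Big)ds.
\end{align*}
Shifting the delayed integral exactly as in the passage from \eqref{L1-2} to \eqref{L1-3} and using $\lambda - b_1 + b_2 = 0$, the running integral of $|e(s)|^2$ cancels and one is left with
\begin{align*}
 \mathbb{E}|e(t)|^2 \leq e^{-\lambda t}\Big(1 + \tfrac{b_2\tau}{\lambda}\Big)\mathbb{E}\|\xi - \eta\|^2.
\end{align*}
Crucially, no additive constant survives here, because the $b_5$ term of Assumption \ref{as2} is absent from the difference inequality; this is precisely why the estimate is purely multiplicative and no $C_2$-type term appears in the conclusion.

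Next I would upgrade this to a bound on the supremum over a delay window, since $\|x_t^{0,\xi} - x_t^{0,\eta}\|^2 = \sup_{0 \leq \theta \leq \tau}|e(t-\theta)|^2$. Following the second half of the proof of Lemma \ref{L1}, I would apply the It\^o formula to $|e(t-\theta)|^2$ on $[t-\tau,\,t-\theta]$, control the resulting martingale term by the Burkholder--Davis--Gundy inequality together with the Young inequality in the form used in \eqref{L1-5} (so that a $\tfrac14\mathbb{E}\sup_{0\le\theta\le\tau}|e(t-\theta)|^2$ term can be absorbed into the left-hand side), and bound the drift and diffusion increments via the Lipschitz-type estimate in Assumption \ref{as1}. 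Feeding the pointwise estimate above in over the window $[t-\tau,t]$ then gives
\begin{align*}
 \mathbb{E}\|x_t^{0,\xi} - x_t^{0,\eta}\|^2 = \mathbb{E}\Big(\sup_{0\le\theta\le\tau}|e(t-\theta)|^2\Big) \leq C_3\, \mathbb{E}\|\xi - \eta\|^2 e^{-\epsilon t},
\end{align*}
with $\epsilon := \lambda = b_1 - b_2$ and $C_3$ collecting $a_1, b_2, \tau, \lambda$ together with the factor $e^{\lambda\tau}$ produced by evaluating the pointwise bound at the earliest time $t-\tau$ of the window.

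The hard part will be the supremum-recovery step: tracking the exponential weight while passing from the pointwise bound to the bound on $\sup_{0\le\theta\le\tau}|e(t-\theta)|^2$, so that the decay rate $e^{-\epsilon t}$ is genuinely preserved rather than degraded to a polynomial-in-$t$ prefactor. The $e^{\lambda\tau}$ factor from the left endpoint of the window is harmless as a $t$-independent constant, but I must verify that the Burkholder--Davis--Gundy/Young absorption and the Lipschitz bounds of Assumption \ref{as1} contribute only $t$-independent multiplicative constants. This is the only place where Assumption \ref{as1} (rather than the monotonicity in Assumption \ref{as2}) is needed, and it is exactly what guarantees that $C_3$ is finite and independent of $t$, as required for the asymptotic-attraction statement.
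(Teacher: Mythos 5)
Your proposal follows essentially the same route the paper intends: it omits the proof of this lemma, remarking only that it is proved "similar to Lemma \ref{L1}", and your argument is precisely that adaptation (It\^o formula applied to $e^{\lambda t}|e(t)|^2$ using the first, difference-form inequality of Assumption \ref{as2}, delay-shift cancellation, then BDG/Young plus Assumption \ref{as1} to recover the supremum over the delay window). The only caveat is that the delay shift produces an extra factor $e^{\lambda\tau}$ on the $b_2$ term, so the cancellation requires $\lambda - b_1 + b_2 e^{\lambda\tau}\le 0$ rather than $\lambda = b_1-b_2$; one should take $\epsilon$ to be (at most) the positive root of $\epsilon = b_1 - b_2 e^{\epsilon\tau}$, which exists since $b_1>b_2$ --- the same adjustment that the paper's own Lemma \ref{L25} makes explicitly.
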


\subsection{The fourth condition in Condition \ref{con19}}
In this subsection, we will analyze the strong convergence of the numerical segment process in arbitrary finite time, but before doing so, some lemmas need to be established first.
\begin{lemma}\label{L6}
    Under Assumption \ref{as1}, for any $T_2>T_1 \geq 0$, the numerical solution \eqref{DSDDE} satisfies
    \begin{align}
        \sup \limits_{(N_1-2M) \vee 0 \leq k \leq N_2} \mathbb{E}|X_k|^2 \leq C_4^{T_2-T_1} \big(\sup \limits_{(T_1-\tau) \vee 0 \leq t_k \leq T_1}\mathbb{E}\|\overline{X}_{t_k}\|^2\big) +C_5^{T_2-T_1}
    \end{align}
   where $C_4^{T_2-T_1}$ and $C_5^{T_2-T_1}$ are positive constant dependent on $T_2-T_1$.
   \begin{proof}
     First, for simplicity, denote $X_k^{0,\xi}$ by $X_k$, then by Assumption \ref{as1} and the Young inequality, it follows from \eqref{DSDDE} that for any $N_1+1\leq k \leq N_2$
       \begin{align*}
         &\quad |X_k|^2 - |X_{k-1}|^2 + |X_{k}-X_{k-1}|^2 \\ \notag
         &= 2\langle X_k-X_{k-1}, X_k\rangle \\ \notag
         &=2 \langle f(X_k, X_{k-M}), X_k \rangle\Delta + 2 \langle g(X_{k-1}, X_{k-M-1})\Delta W_{k-1}, X_k \rangle \\ \notag
         &=2 \langle f(X_k, X_{k-M}), X_k \rangle\Delta + 2 \langle g(X_{k-1}, X_{k-M-1})\Delta W_{k-1}, X_k-X_{k-1} \rangle + 2 \langle g(X_{k-1}, X_{k-M-1})\Delta W_{k-1}, X_{k-1} \rangle  \\ \notag
         & \leq (-b_3|X_k|^2 +b_4|X_{k-M}|^2 + b_5)\Delta - |g(X_k, X_{k-M})|^2\Delta + | g(X_{k-1}, X_{k-M-1})\Delta W_{k-1}|^2 + |X_k-X_{k-1}|^2 \\ \notag
         &\quad+  2 \langle g(X_{k-1}, X_{k-M-1})\Delta W_{k-1}, X_{k-1} \rangle 
       \end{align*}
       Then, canceling the same terms on both sides yields
       \begin{align*}
           (1+b_3\Delta)|X_k|^2 &\leq |X_{k-1}|^2 + |g(X_{k-1},X_{k-M-1})\Delta W_{k-1}|^2 +b_4 |X_{k-M}|^2\Delta +b_5\Delta \\ \notag
           &\quad+  2 \langle g(X_{k-1}, X_{k-M-1})\Delta W_{k-1}, X_{k-1} \rangle 
       \end{align*}
       Next, taking the expectation and making use of \eqref{LG}, we can see that 
       \begin{align*}
           \mathbb{E}|X_k|^2 & \leq \frac{1+c_1\Delta}{1+b_3\Delta} \mathbb{E}|X_{k-1}|^2+ \frac{c_1\Delta}{1+b_3\Delta}\mathbb{E}|X_{k-M-1}|^2 +\frac{b_4\Delta}{1+b_3\Delta}\mathbb{E}|X_{k-M}|^2 +\frac{(c_2+b_5)\Delta}{1+b_3\Delta} \\ \notag
           & \leq (1+c_1\Delta)\mathbb{E}|X_{k-1}|^2 + c_1\Delta \mathbb{E}|X_{k-M-1}|^2 + b_4\Delta \mathbb{E}|X_{k-M}|^2 + (c_2+b_5)\Delta \\ \notag
           & \quad\vdots \\ \notag
          & \leq (1+c_1\Delta)^{k-N_1} \mathbb{E}|X_{N_1}|^2+ c_1\Delta \sum \limits_{i=N_1-M}^{k-M-1} (1+c_1\Delta)^{k-M-1-i} \mathbb{E}|X_{i}|^2  \\  \notag
          &\quad+ b_4\Delta \sum \limits_{i=N_1-M}^{k-M-1} (1+c_1\Delta)^{k-M-1-i} \mathbb{E}|X_{i+1}|^2 + (c_2+b_5)\Delta \sum \limits_{i=0}^{k-N_1-1}  (1+c_1\Delta)^i . 
     \end{align*}
Note that $X_k=\overline{X}(kh)$, then by Lemma \ref{L--5} and the time-homogeneous of $\{\overline{X}_{t_k}\}_{k \geq 0}$, we also have
     \begin{align}\label{L-19-1}
           \mathbb{E}|X_k|^2 = \mathbb{E}|X_{k-N_1}^{0, X_{N_1}}|^2 
           &\leq (1+c_1\Delta)^{k-N_1} \mathbb{E}|X_{N_1}|^2+ c_1\Delta \sum \limits_{i=-M}^{k-N_1-M-1} (1+c_1\Delta)^{k-N_1-M-1-i} \mathbb{E}|X_{i}^{0, X_{N_1}}|^2 \\  \notag
          &\quad+ b_4\Delta \sum \limits_{i=-M}^{k-N_1-M-1} (1+c_1\Delta)^{k-N_1-M-1-i} \mathbb{E}|X_{i+1}^{0, X_{N_1}}|^2  + (c_2+b_5)\Delta \sum \limits_{i=0}^{k-N_1-1}  (1+c_1\Delta)^i  
     \end{align}
       Since
        \begin{align*}
            \lim \limits_{\Delta \to 0}   (1+c_1 \Delta)^{N_2-N_1} =  \lim \limits_{\Delta \to 0}   (1+c_1 \Delta)^{\frac{t_2-t_1}{\Delta}}=e^{c_1(T_2-T_1)},
        \end{align*}
       there exists a positive constant $\Delta^* \in [0,1)$ such that for any $\Delta \in [0, \Delta^*)$
       \begin{align}\label{L-19-2}
          (1+c_1 \Delta)^{N_2-N_1} \leq e^{2c_1(T_2-T_1)}.
       \end{align}

Note that for $N_1+M+1 \leq k \leq N_2$, it follows from \eqref{L-19-1} and \eqref{L-19-2} that
 \begin{align*}
           \mathbb{E}|X_k|^2 &= \mathbb{E}|X_{k-N_1}^{0, X_{N_1}}|^2  \\ \notag
           &\leq e^{2c_1(T_2-T_1)} \mathbb{E}|X_{N_1}|^2 + (c_1 + b_4) e^{2c_1(T_2-T_1)} \Delta\sum \limits_{i=-M}^{k-N_1-M-1} \mathbb{E}|X_i^{0, X_{N_1}}|^2 + 
        \frac{e^{2c_1(T_2-T_1)}  (c_2+b_5)}{c_1} \\ \notag
        &\leq \frac{e^{2c_1(T_2-T_1)}  (c_2+b_5)}{c_1} + \Big(e^{2c_1(T_2-T_1)}+(c_1 + b_4) \tau e^{2c_1(T_2-T_1)} \Big) \Big(\mathbb{E}\big(\sup \limits_{-M \leq i \leq -1} |X_i^{0, X_{N_1}}|^2 \big)\Big) \\ \notag
        &\quad+ (c_1 + b_4) e^{2c_1(T_2-T_1)} \Delta\sum \limits_{i=0}^{k-N_1-M-1} \mathbb{E}|X_i^{0,X_{N_1}}|^2. 
       \end{align*}
Similarly, for $N_1+1 \leq k \leq N_1+M$, we get
  \begin{align*}
           \mathbb{E}|X_k|^2 &= \mathbb{E}|X_{k-N}^{0, X_N}|^2  \\ \notag
           &\leq e^{2c_1(T_2-T_1)} \mathbb{E}|X_{N}|^2 + (c_1 + b_4) e^{2c_1(T_2-T_1)} \Delta\sum \limits_{i=-M}^{k-N-M-1} \mathbb{E}|X_i^{0, X_N}|^2 + 
        \frac{e^{2c_1(T_2-T_1)}  (c_2+b_5)}{c_1} \\ \notag
        &\leq \frac{e^{2c_1(T_2-T_1)}  (c_2+b_5)}{c_1} + \Big(e^{2c_1(T_2-T_1)}+(c_1 + b_4) \tau e^{2c_1(T_2-T_1)} \Big) \Big(\mathbb{E}\big(\sup \limits_{-M \leq i \leq 0} |X_i^{0, X_N}|^2 \big)\Big)
        \end{align*}
 
      Thus, for $N_1+1 \leq k \leq N_2$,  recall that $\overline{X}_{T}^{0, \xi} = \{\overline{X}(T-\theta; 0, \xi): -\tau\leq \theta\leq 0\}$ and $\overline{X}(t_k; 0,\xi)= X_k^{0, \xi}$, then we can apply the discrete-type Gronwall inequality such that
       \begin{align}\label{L-19-3}
            \mathbb{E}|X_k|^2 &= \mathbb{E}|X_{k-N}^{0, X_N}|^2  \\ \notag
            &\leq \Big(\frac{e^{2c_1(T_2-T_1)}  (c_2+b_5)}{c_1} + \Big(e^{2c_1(T_2-T_1)}+(c_1 + b_4) \tau e^{2c_1(T_2-T_1)} \Big) \Big(\mathbb{E}\big(\sup \limits_{-M \leq i \leq 0} |X_i^{0, X_N}|^2 \big)\Big)\Big)\\ \notag
            &\quad \times e^{2(c_1+b_4)(T_2-T_1)e^{2c_1(T_2-T_1)}} \\ \notag
             &= \Big(\frac{e^{2c_1(T_2-T_1)}  (c_2+b_5)}{c_1} + \Big(e^{2c_1(T_2-T_1)}+(c_1 + b_4) \tau e^{2c_1(T_2-T_1)} \Big) \Big(\mathbb{E}\big(\sup \limits_{-M \leq i \leq 0} |X_{N+i}^{0,\xi}|^2 \big)\Big)\Big)\\ \notag
            &\quad \times e^{2(c_1+b_4)(T_2-T_1)e^{2c_1(T_2-T_1)}} \\ \notag
            & \leq \Big(\frac{e^{2c_1(T_2-T_1)}  (c_2+b_5)}{c_1} + \Big(e^{2c_1(T_2-T_1)}+(c_1 + b_4) \tau e^{2c_1(T_2-T_1)} \Big) \Big(\mathbb{E}\big(\|\overline{X}_{T_1}^{0,\xi}\|^2\big)\Big)\Big)\\ \notag
            &\quad \times e^{2(c_1+b_4)(T_2-T_1)e^{2c_1(T_2-T_1)}}
       \end{align}
       As for any $(N_1-2M) \vee 0 \leq k \leq N_1$, it is obvious that
       \begin{align}\label{L-19-4}
           \mathbb{E}|X_k|^2 \leq \sup \limits_{(T_1-\tau) \vee 0 \leq t_k \leq T_1} \mathbb{E}\|\overline{X}_{t_k}^{0,\xi}\|^2
       \end{align}
     Combining \eqref{L-19-3} and \eqref{L-19-4} and setting
     \begin{align*} 
     C_4^{T_2-T_1}&:= \big(1+(c_1+b_4)\tau\big)e^{2c_1(T_2-T_1)+ 2(c_1+b_4)(T_2-T_1)e^{2c_1(T_2-T_1)} } \\ \notag
       C_5^{T_2-T_1}&:=\frac{ c_2+b_5}{c_1}e^{2c_1(T_2-T_1)+ 2(c_1+b_4)(T_2-T_1)e^{2c_1(T_2-T_1)} }.
     \end{align*}
     The proof is complete.
   \end{proof}
 \end{lemma}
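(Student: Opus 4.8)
The plan is to derive a single-step recursive inequality for $\mathbb{E}|X_k|^2$, iterate it, and then invoke the time-homogeneity of the chain to restart the recursion at step $N_1$. First I would use the identity $2\langle X_k-X_{k-1},X_k\rangle = |X_k|^2-|X_{k-1}|^2+|X_k-X_{k-1}|^2$ and substitute the increment $X_k-X_{k-1}=f(X_k,X_{k-M})\Delta+g(X_{k-1},X_{k-M-1})\Delta W_{k-1}$ from \eqref{DSDDE}. The crucial feature is that the implicit drift evaluates $f$ at $X_k$, so the dissipativity bound of Assumption \ref{as2}, namely $2\langle X_k,f(X_k,X_{k-M})\rangle+|g(X_k,X_{k-M})|^2\leq -b_3|X_k|^2+b_4|X_{k-M}|^2+b_5$, applies directly to $X_k$; splitting the diffusion cross term as $2\langle g(X_{k-1},X_{k-M-1})\Delta W_{k-1},X_k\rangle = 2\langle g(X_{k-1},X_{k-M-1})\Delta W_{k-1},X_k-X_{k-1}\rangle+2\langle g(X_{k-1},X_{k-M-1})\Delta W_{k-1},X_{k-1}\rangle$, bounding the first piece by Young's inequality, and cancelling the resulting $|X_k-X_{k-1}|^2$ terms produces the contractive factor $(1+b_3\Delta)|X_k|^2$ on the left-hand side.

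Next I would take expectations. The remaining cross term $2\langle g(X_{k-1},X_{k-M-1})\Delta W_{k-1},X_{k-1}\rangle$ vanishes in expectation because $\Delta W_{k-1}$ is independent of $\mathcal F_{t_{k-1}}$, while $\mathbb{E}|g(X_{k-1},X_{k-M-1})\Delta W_{k-1}|^2=\Delta\,\mathbb{E}|g(X_{k-1},X_{k-M-1})|^2$. Bounding the latter with the linear-growth estimate \eqref{LG} and dividing by $1+b_3\Delta$ (using $b_3>0$ to absorb the quotients) yields the clean recursion
\begin{align*}
\mathbb{E}|X_k|^2 \leq (1+c_1\Delta)\mathbb{E}|X_{k-1}|^2 + c_1\Delta\,\mathbb{E}|X_{k-M-1}|^2 + b_4\Delta\,\mathbb{E}|X_{k-M}|^2 + (c_2+b_5)\Delta.
\end{align*}

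Then I would iterate this inequality down to the index $N_1$, picking up the geometric prefactor $(1+c_1\Delta)^{k-N_1}$ together with delay sums in $\mathbb{E}|X_{k-M}|^2$ and $\mathbb{E}|X_{k-M-1}|^2$. Here I would invoke the homogeneous Markov property from Lemma \ref{L--5} to write $X_k=X_{k-N_1}^{0,X_{N_1}}$, so that the recursion effectively starts from the segment $\overline{X}_{t_{N_1}}$ and all delayed indices in $[-M,0]$ are controlled by $\sup_{-M\leq i\leq 0}\mathbb{E}|X_{N_1+i}^{0,\xi}|^2\leq \mathbb{E}\|\overline{X}_{T_1}^{0,\xi}\|^2$. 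Since $(1+c_1\Delta)^{N_2-N_1}\to e^{c_1(T_2-T_1)}$ as $\Delta\to 0$, for all sufficiently small $\Delta$ the prefactor is bounded by $e^{2c_1(T_2-T_1)}$; this converts the telescoped estimate into a self-referential inequality of the form $\mathbb{E}|X_k|^2\leq A+B\Delta\sum_{i}\mathbb{E}|X_i^{0,X_{N_1}}|^2$ with $A,B$ depending only on $T_2-T_1$ and the structural constants $c_1,c_2,b_4,b_5,\tau$. A discrete Gronwall inequality then closes the bound and produces the constants $C_4^{T_2-T_1},C_5^{T_2-T_1}$ with the stated exponential dependence on $T_2-T_1$.

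The main obstacle I anticipate is the bookkeeping forced by the two distinct delay shifts ($M$ from the implicit drift and $M+1$ from the explicit diffusion): after iterating, each delay sum must be re-indexed and split into a part with indices in the initial segment $[-M,0]$ and a part with indices in $[0,k-N_1]$, and only the latter may be fed into the Gronwall step. Keeping these ranges disjoint and correctly absorbing the boundary terms into $\sup_{(T_1-\tau)\vee 0\leq t_k\leq T_1}\mathbb{E}\|\overline{X}_{t_k}\|^2$ is where the argument is most delicate; the dissipative gain from the implicit drift is what guarantees the geometric factor remains controlled rather than growing unboundedly with $N_2$.
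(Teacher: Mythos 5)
Your proposal follows essentially the same route as the paper's proof: the identity $2\langle X_k-X_{k-1},X_k\rangle=|X_k|^2-|X_{k-1}|^2+|X_k-X_{k-1}|^2$, the splitting of the diffusion cross term with Young's inequality to extract the factor $(1+b_3\Delta)$, the one-step recursion via \eqref{LG}, iteration with the geometric prefactor bounded by $e^{2c_1(T_2-T_1)}$, the restart at $N_1$ via time-homogeneity, the split of the delay sums into the initial segment and the forward part, and the discrete Gronwall closure are all exactly the steps the paper takes. You also correctly note that the dissipativity bound from Assumption \ref{as2} is needed (the lemma's statement cites only Assumption \ref{as1}, but the paper's own proof uses Assumption \ref{as2} as well), so your account is if anything slightly more careful on hypotheses.
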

 
 \begin{lemma}\label{L-14}
     Suppose that Assumption \ref{as1} holds, then for any $T_1 \geq 0$ and $t \in [T_1-\tau, T_2]$,
\begin{align}
    \mathbb{E}\big(|\overline{X}(t)- X_1(t)|^2\big) \leq C_6^{T_2-T_1} \Big(\big(\sup \limits_{(T_1-\tau) \vee 0 \leq t_k \leq T_1}\mathbb{E}\|\overline{X}_{t_k}\|^2\big) +1\Big)\Delta,  \label{L-14-1}\\ 
     \mathbb{E}\big(|\overline{X}(t)- X_2(t)|^2\big) \leq C_7^{T_2-T_1} \Big(\big(\sup \limits_{(T_1-\tau) \vee 0 \leq t_k \leq T_1}\mathbb{E}\|\overline{X}_{t_k}\|^2\big) +1\Big)\Delta. \label{L-14-2},
\end{align}
where $C_6^{T_2-T_1}$ and $C_7^{T_2-T_1}$ are a pair of constant dependent on $T_2-T_1$.
 \end{lemma}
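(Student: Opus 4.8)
The plan is to localize the estimate to a single mesh subinterval $[t_k,t_{k+1})$ and to exploit the identity $\overline{X}(t_k)=X_k$ for every $k\ge -M$, which follows by evaluating the continuous scheme \eqref{CDSDDE} at the grid points and comparing with the recursion \eqref{DSDDE}. Fix $t\in[t_k,t_{k+1})$ with $t_k\ge 0$. On this subinterval the piecewise-constant processes are frozen, namely $X_1(s)=X_{k+1}$ and $X_2(s)=X_k$, so \eqref{CDSDDE} yields the increment representation
\begin{align*}
\overline{X}(t)=X_k+f(X_{k+1},X_{k+1-M})(t-t_k)+g(X_k,X_{k-M})\big(W(t)-W(t_k)\big).
\end{align*}

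From this, $\overline{X}(t)-X_2(t)=\overline{X}(t)-X_k$ is read off directly. For $\overline{X}(t)-X_1(t)=\overline{X}(t)-X_{k+1}$, I would substitute the recursion $X_{k+1}-X_k=f(X_{k+1},X_{k+1-M})\Delta+g(X_k,X_{k-M})\Delta W_k$ to cancel $X_k$ and obtain
\begin{align*}
\overline{X}(t)-X_{k+1}=f(X_{k+1},X_{k+1-M})(t-t_{k+1})+g(X_k,X_{k-M})\big(W(t)-W(t_{k+1})\big).
\end{align*}
In both cases I then square, apply $(u+v)^2\le 2u^2+2v^2$, and bound the deterministic time factors by $(t-t_k)^2\le\Delta^2$ and $(t-t_{k+1})^2\le\Delta^2$. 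The diffusion terms are handled by independence: $g(X_k,X_{k-M})$ is $\mathcal{F}_{t_k}$-measurable, while $W(t)-W(t_k)$ is independent of $\mathcal{F}_{t_k}$ and $W(t)-W(t_{k+1})$ is the increment over $[t,t_{k+1}]$, hence independent of $\mathcal{F}_t\supseteq\mathcal{F}_{t_k}$; consequently $\mathbb{E}\big(|g(X_k,X_{k-M})|^2|W(t)-W(t_k)|^2\big)=\mathbb{E}|g(X_k,X_{k-M})|^2(t-t_k)$ and likewise with $t_{k+1}$, each prefactor being at most $\Delta$.

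The final step turns the coefficient moments into the asserted right-hand side. The linear growth bound \eqref{LG} shows that $\mathbb{E}|f(X_{k+1},X_{k+1-M})|^2$ and $\mathbb{E}|g(X_k,X_{k-M})|^2$ are each at most $c_1$ times a sum of $\mathbb{E}|X_j|^2$ over $j\in\{k-M,k+1-M,k,k+1\}$ plus $c_2$. As $t$ sweeps $[T_1-\tau,T_2]$ these indices all lie in $[(N_1-2M)\vee 0,\,N_2]$, so Lemma \ref{L6} bounds every one of them by $C_4^{T_2-T_1}\sup_{(T_1-\tau)\vee 0\le t_k\le T_1}\mathbb{E}\|\overline{X}_{t_k}\|^2+C_5^{T_2-T_1}$. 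Since $\Delta\in(0,1)$ gives $\Delta^2\le\Delta$, collecting the two contributions produces exactly the claimed $O(\Delta)$ estimates with constants $C_6^{T_2-T_1},C_7^{T_2-T_1}$ of the stated dependence. For the boundary range $t\in[T_1-\tau,0)$, present only when $T_1<\tau$, both $\overline{X}(t)$ and the frozen values are evaluations of $\xi$, so the differences reduce to $\xi(t)-\xi(t_{k+1})$ and $\xi(t)-\xi(t_k)$, which are bounded by $K_1\Delta$ using the H\"older continuity in Assumption \ref{as3}.

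I expect the main obstacle to be purely the measurability bookkeeping in the forward-increment term for $X_1(t)$: one must confirm that $W(t)-W(t_{k+1})$ is independent of the $\mathcal{F}_{t_k}$-measurable diffusion coefficient before factoring the expectation, and that the full index set $\{k-M,\dots,k+1\}$ generated over $[T_1-\tau,T_2]$ is precisely the one covered by Lemma \ref{L6}. Everything else is a routine combination of the elementary inequality, the linear growth bound, and the moment estimate of Lemma \ref{L6}.
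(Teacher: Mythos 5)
Your proposal is correct and follows essentially the same route as the paper: localize to a mesh subinterval, represent $\overline{X}(t)-X_1(t)$ and $\overline{X}(t)-X_2(t)$ via the (piecewise-constant) drift and diffusion increments, estimate the second moment using the It\^o isometry together with the linear growth bound \eqref{LG}, invoke Lemma \ref{L6} to control the resulting moments of $X_j$, and treat $t\in[-\tau,0)$ separately via Assumption \ref{as3}. The only cosmetic difference is that you write the local increment explicitly while the paper keeps the integral form $\int_t^{t_{k+1}}$; the computation is identical.
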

 \begin{proof}
     First, for any $t \geq (T_1-\tau) \vee 0$, choose $k$ for $t \in [t_k, t_{k+1})$, then by \eqref{CDSDDE}, we have
     \begin{align*}
         \overline{X}(t)-X_1(t)=\int_t^{t_{k+1}} f(X_1(s), X_1(s-\tau))ds + \int_t^{t_{k+1}} g(X_2(s), X_2(s-\tau)) dW(s).
     \end{align*}
 Next, using the It$\rm{\hat{o}}$ isometry and \eqref{LG}, we obtain
  \begin{align*}
      \mathbb{E}(|\overline{X}(t)- X_1(t)|^2) &\leq 2\Delta \mathbb{E}\int_t^{t_{k+1}} |f(X_1(s), X_1(s-\tau))|^2ds + 2\mathbb{E}\int_t^{t_{k+1}} |g(X_2(s), X_2(s-\tau))|^2 ds \\ \notag
      &\leq 2\tau \Big(c_1 (\mathbb{E}|X_{k+1}|^2 + \mathbb{E}|X_{k+1-M}|^2 )+c_2\Big)\Delta+ 2\Big(c_1 (\mathbb{E}|X_{k}|^2 + \mathbb{E}|X_{k-M}|^2 )+c_2\Big)\Delta.
  \end{align*}
   For any $t \in [(T_1-\tau) \vee 0,T_2]$, this together with Lemma \ref{L6} implies
  \begin{align*}
      \mathbb{E}(|\overline{X}(t)- X_1(t)|^2) &\leq 4c_1(\tau +1) \Big(C_4^{T_2-T_1} \big(\sup \limits_{(T_1-\tau) \vee 0 \leq t_k \leq T_1}\mathbb{E}\|\overline{X}_{t_k}\|^2\big)+ C_5^{T_2-T_1} \Big)\Delta +2c_2(\tau+1)\Delta  \\ \notag
      &\leq C_6^{T_2-T_1} \Big(\big(\sup \limits_{T_1-\tau \leq t_k \leq T_1}\mathbb{E}\|\overline{X}_{t_k}\|^2\big) +1\Big)\Delta,
  \end{align*}
  Meanwhile, by Assumption \ref{as3}, for any $t \in [-\tau, 0]$, we also have
  \begin{align}
      \mathbb{E}(|\overline{X}(t)- X_1(t)|^2) \leq K_1 \Delta.
  \end{align}
 Thus, let $C_6^{T_2-T_1} := K_1 \vee \Big(4c_1(\tau +1)C_4^{T_2-T_1} \Big) \vee \Big(4c_1(\tau +1)C_5^{T_2-T_1}  + 2c_2(\tau+1)\Big)$. Then, we get the required assertion \eqref{L-14-1}, and the proofs of \eqref{L-14-2} follow a similar course as previously outlined. 
 \end{proof}

    \begin{lemma}\label{L11}
       Suppose that Assumptions \ref{as1} and \ref{as2} hold, then for any $\Delta \in (0,1)$, $T_2 > T_1 \geq 0$ and $t \in [T_1, T_2]$
        \begin{align*}
            \sup \limits_{T_1 \leq t_k \leq T_2} \mathbb{E}\Big(\|x_{t_k}^{T_1, \overline{X}_{T_1}}-\overline{X}_{t_k}^{0,\xi}\|^2\Big) \leq C^{T_2-T_1}_8 \Big(\sup \limits_{T_1-\tau \leq t_k\leq T_1}\mathbb{E}\|\overline{X}_{t_k}^{0,\xi}\|^2 +1\Big)\Delta,
        \end{align*}
        where $C^{T_2-T_1}_8$ is a positive constant depneds on $T_2-T_1$.
    \end{lemma}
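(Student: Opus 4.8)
The plan is to estimate the error process directly with Itô's formula and a Gronwall argument, in the spirit of the classical finite-horizon strong-convergence proof for the backward Euler--Maruyama method, while carefully carrying the supremum-in-time required by the segment norm. Write $\bar{x}(t):=x(t;T_1,\overline{X}_{T_1})$ for the true solution launched at $T_1$ from the numerical segment $\overline{X}_{T_1}$ (which exists and is unique under Assumptions \ref{as1}--\ref{as2}), and set $e(t):=\bar{x}(t)-\overline{X}(t)$. Since $\bar{x}$ is started exactly from $\overline{X}_{T_1}$, we have $e(t)=0$ for $t\in[T_1-\tau,T_1]$, and for $t\ge T_1$ the continuous scheme \eqref{CDSDDE} and the additivity of the integral give
\begin{align*}
e(t)=\int_{T_1}^{t}\Big(f(\bar{x}(s),\bar{x}(s-\tau))-f(X_1(s),X_1(s-\tau))\Big)ds+\int_{T_1}^{t}\Big(g(\bar{x}(s),\bar{x}(s-\tau))-g(X_2(s),X_2(s-\tau))\Big)dW(s).
\end{align*}
Because the target is a segment norm, I would first observe that $\mathbb{E}\|x_{t_k}^{T_1,\overline{X}_{T_1}}-\overline{X}_{t_k}^{0,\xi}\|^2=\mathbb{E}\sup_{t_k-\tau\le s\le t_k}|e(s)|^2\le\mathbb{E}\sup_{T_1\le s\le T_2}|e(s)|^2=:H(T_2)$, using $e\equiv0$ on $[T_1-\tau,T_1]$, so that it suffices to bound $H(T_2)$.

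Next I would apply the It\^o formula to $|e(t)|^2$, take the supremum over $[T_1,t]$, and then expectations, controlling the resulting stochastic integral by the Burkholder--Davis--Gundy inequality followed by Young's inequality. This produces a term $\tfrac12\,\mathbb{E}\sup_{T_1\le r\le t}|e(r)|^2$ that is absorbed into the left-hand side, leaving the task of bounding $\mathbb{E}\int_{T_1}^{t}\big(2\langle e(s),\text{(drift difference)}\rangle+(1+C)\,|\text{(diffusion difference)}|^2\big)ds$. The decisive manipulation is to insert the intermediate arguments $f(\overline{X}(s),\overline{X}(s-\tau))$ and $g(\overline{X}(s),\overline{X}(s-\tau))$ and to split each difference in two. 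The parts comparing $(\bar{x}(s),\bar{x}(s-\tau))$ with $(\overline{X}(s),\overline{X}(s-\tau))$ are bounded, via the global Lipschitz Assumption \ref{as1} (and, if one wants sharper constants, the monotonicity in Assumption \ref{as2}), by $C|e(s)|^2+C|e(s-\tau)|^2$; the remaining interpolation parts, comparing $(\overline{X}(s),\overline{X}(s-\tau))$ with $(X_1(s),X_1(s-\tau))$ and $(X_2(s),X_2(s-\tau))$, are again bounded by Assumption \ref{as1} in terms of $|\overline{X}(s)-X_1(s)|^2$, $|\overline{X}(s)-X_2(s)|^2$ and their $\tau$-shifts.

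The main obstacle is to make the interpolation error enter at the correct order $\Delta$. Here I would split the squared diffusion difference with a fixed constant, i.e. $|a+b|^2\le 2|a|^2+2|b|^2$, rather than with a step-size-dependent constant such as $1+\tfrac1\Delta$, which would cancel the $O(\Delta)$ gain of Lemma \ref{L-14}. With the fixed split, the interpolation contributions are exactly $C\big(\mathbb{E}|\overline{X}(s)-X_1(s)|^2+\mathbb{E}|\overline{X}(s)-X_2(s)|^2+\text{($\tau$-shifts)}\big)$, and Lemma \ref{L-14} (with the numerical moment bound of Lemma \ref{L6}) bounds each by $C^{T_2-T_1}\big(\sup_{T_1-\tau\le t_k\le T_1}\mathbb{E}\|\overline{X}_{t_k}^{0,\xi}\|^2+1\big)\Delta$, uniformly over $s\in[T_1,T_2]$ since the shifted times stay in $[T_1-\tau,T_2]$.

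Collecting the estimates and integrating over $[T_1,T_2]$ yields an inequality of the form $\tfrac12 H(t)\le (A+B)\int_{T_1}^{t}H(s)\,ds+D^{T_2-T_1}\big(\sup_{T_1-\tau\le t_k\le T_1}\mathbb{E}\|\overline{X}_{t_k}^{0,\xi}\|^2+1\big)\Delta$, where I use $\mathbb{E}|e(s)|^2\le H(s)$ and, because $e\equiv0$ on $[T_1-\tau,T_1]$, also $\mathbb{E}|e(s-\tau)|^2\le H(s)$ for $s\in[T_1,T_2]$. Applying Gronwall's inequality then gives $H(T_2)\le C_8^{T_2-T_1}\big(\sup_{T_1-\tau\le t_k\le T_1}\mathbb{E}\|\overline{X}_{t_k}^{0,\xi}\|^2+1\big)\Delta$ with $C_8^{T_2-T_1}$ of exponential type in $T_2-T_1$, which is exactly the asserted bound.
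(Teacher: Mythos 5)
Your proposal is correct and follows essentially the same route as the paper's proof: the same decomposition of the error into the drift and diffusion integrals, the same insertion of $\overline{X}$ between $\bar{x}$ and the step interpolants $X_1,X_2$, the same reliance on Lemma \ref{L-14} (with Lemma \ref{L6}) to make the interpolation error enter at order $\Delta$, the same use of $e\equiv 0$ on $[T_1-\tau,T_1]$ to pass from the segment norm to $\sup_{T_1\le s\le T_2}|e(s)|^2$, and the same Gronwall closing step. The only difference is technical: you apply the It\^o formula to $|e(t)|^2$ and absorb the BDG term by Young's inequality, whereas the paper squares the integral representation directly and uses Cauchy--Schwarz on the drift integral plus the $L^2$ Burkholder--Davis--Gundy bound on the martingale; both lead to the same integral inequality and the same exponential constant.
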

  \begin{proof}
       First, for simplicity, write $y(t):= x(t;T_1, \overline{X}_{T_1}^{0,\xi})$. From \eqref{SDDE} and \eqref{CDSDDE}, for any $t \in [T_1, T_2]$, we have
        \begin{align}\label{L14-1}
            y(t)- \overline{X}(t)&=\int_{T_1 }^t f(y(s), y(s-\tau)) - f(X_1(s), X_{1}(s-\tau)) ds \\ \notag
            &\quad+ \int_{T_1}^{t}g(y(s), y(s-\tau))-g(X_2(s), X_{2}(s-\tau))dW(s)
        \end{align}
        Then by the H$\rm{\ddot{o}}lder$ inequality, the Burkholder–Davis–Gundy inequality and Assumption \ref{as1}, it follows from \eqref{L14-1} that
        \begin{align}\label{L11-1}
          &\quad ~ \mathbb{E}\Big(\sup \limits_{T_1  \leq s \leq T_2}|y(s)-\overline{X}(s)|^2\Big)\\ \notag
            &\leq 2(T_2-T_1)\mathbb{E} \Big(\int_{T_1}^{T_2} |f(y(s), y(s-\tau)) - f(X_1(s), X_{1}(s-\tau))|^2 ds\Big) \\ \notag
            & \quad+8\mathbb{E}\Big(\int_{T_1}^{T_2}|g(y(s), y(s-\tau))-g(X_2(s), X_{2}(s-\tau))|^2ds\Big) \\ \notag
            &\leq 2a_1(T_2-T_1)\mathbb{E}\Big(\int_{T_1}^{T_2}(|y(s)-X_1(s)|^2 + |y(s-\tau)-X_1(s-\tau)|^2)ds\Big)\\ \notag
            & \quad+8a_1\mathbb{E}\Big(\int_{T_1}^{T_2} (|y(s)-X_2(s)|^2 + |y(s-\tau)-X_2(s-\tau)|^2)ds\Big)
        \end{align}
        Next, let's analyze the above inequality. By applying the elementary inequality $2ab \leq |a|^2+|b|^2$, we can see that
        \begin{align}\label{L11-2}
           &\quad \mathbb{E}\Big(\int_{T_1}^{T_2}(|y(s)-X_1(s)|^2 + |y(s-\tau)-X_1(s-\tau)|^2)ds\Big)  \\ \notag
           & \leq 2 \mathbb{E}\Big(\int_{T_1}^{T_2}(|y(s)-\overline{X}(s)|^2+|\overline{X}(s)-X_1(s)|^2 + |y(s-\tau)-\overline{X}(s-\tau)|^2+|\overline{X}(s-\tau)-X_1(s-\tau)|^2)ds\Big) \\ \notag
           &= J_1+J_2+J_3+J_4. 
        \end{align}
        Note that 
        \begin{align}\label{L11-3}
            J_1=2 \mathbb{E}\Big(\int_{T_1}^{T_2}|y(s)-\overline{X}(s)|^2ds \Big) \leq 2\int_{T_1}^{T_2}\mathbb{E}\Big(\sup \limits_{T_1 \leq u \leq s}|y(u)-\overline{X}(u)|^2\Big)ds 
        \end{align}
        Meanwhile, from Lemma \ref{L-14}, we have
        \begin{align}\label{L11-4}
            J_2 &=2\mathbb{E}\Big(\int_{T_1}^{T_2}|\overline{X}(s)-X_1(s)|^2ds \Big) \\ \notag
            &\leq 2(T_2-T_1)C_6^{T_2-T_1}\Big(\big(\sup \limits_{(T_1-\tau) \vee 0 \leq t_k \leq T_1}\mathbb{E}\|\overline{X}_{t_k}\|^2\big)+1\Big)\Delta.
        \end{align}
        Similarly,
        \begin{align}\label{L11-5}
            J_3 &= 2 \mathbb{E}\Big(\int_{T_1}^{T_2}|y(s-\tau)-\overline{X}(s-\tau)|^2ds \Big) 
            =2 \mathbb{E}\Big(\int_{T_1}^{T_2-\tau}|y(s)-\overline{X}(s)|^2ds \Big) \\ \notag
            &\leq 2 \mathbb{E}\Big(\int_{T_1}^{T_2}|y(s)-\overline{X}(s)|^2ds \Big) \leq 2\int_{T_1}^{T_2}\mathbb{E}\Big(\sup \limits_{T_1 \leq u \leq s}|y(u)-\overline{X}(u)|^2\Big)ds .
        \end{align}
        As for $J_4$, from Assumption \ref{as3} and Lemma \ref{L-14}, we get 
        \begin{align}\label{L11-6}
    J_4&=2\mathbb{E}\Big(\int_{T_1}^{T_2}|\overline{X}(s-\tau)-X_1(s-\tau)|^2ds \Big) \\ \notag
            &=2\mathbb{E}\Big(\int_{T_1-\tau}^{T_2-\tau}|\overline{X}(s)-X_1(s)|^2ds \Big)  \\ \notag
            &\leq 2(T_2-T_1) (C_6^{T_2-T_1}+K_1)\Big(\big(\sup \limits_{(T_1-\tau)\vee 0 \leq t_k \leq T_1}\mathbb{E}\|\overline{X}_{t_k}\|^2\big)+1\Big)\Delta.
        \end{align}
        Inserting \eqref{L11-3},\eqref{L11-4},\eqref{L11-5} and \eqref{L11-6} into \eqref{L11-2} yields
        \begin{align}\label{L11-7}
           &\quad \mathbb{E}\Big(\int_{T_1}^{T_2}(|y(s)-X_1(s)|^2 + |y(s-\tau)-X_1(s-\tau)|^2)ds\Big)  \\ \notag
           & \leq 2(T_2-T_1)( 2C_6^{T_2-T_1}+K_1)\Big(\big(\sup \limits_{(T_1-\tau)\vee 0 \leq t_k \leq T_1}\mathbb{E}\|\overline{X}_{t_k}\|^2\big)+1\Big)\Delta + 4\int_{T_1}^{T_2}\mathbb{E}\Big(\sup \limits_{T_1 \leq u \leq s}|y(u)-\overline{X}(u)|^2\Big)ds.
        \end{align}
     Similar to the proof used in \eqref{L11-7}, we derive that 
     \begin{align}\label{L11-8}
        &\quad \mathbb{E}\Big(\int_{T_1}^{T_2} (|y(s)-X_2(s)|^2 + |y(s-\tau)-X_2(s-\tau)|^2)ds\Big)  \\    & \leq  2(T_2-T_1)( C_7^{T_2-T_1}+K_1)\Big(\big(\sup \limits_{(T_1-\tau)\vee 0 \leq t_k \leq T_1}\mathbb{E}\|\overline{X}_{t_k}\|^2\big)+1\Big)\Delta + 4\int_{T_1}^{T_2}\mathbb{E}\Big(\sup \limits_{T_1 \leq u \leq s}|y(u)-\overline{X}(u)|^2\Big)ds
     \end{align}
     Then, inseting \eqref{L11-7} and \eqref{L11-8} into \eqref{L11-1}, we obtain
     \begin{align*}
           &\quad\mathbb{E}\Big(\sup \limits_{T_1 \leq s \leq T_2}|y(s)-\overline{X}(s)|^2\Big) \\ \notag
          & \leq \Big(2(T_2-T_1)(2a_1(C_6^{T}+K_1)(T_2-T_1)+8a_1(C_7^{T_2-T_1}+K_1))\Big)\Big (\big(\sup \limits_{(T_1-\tau)\vee 0 \leq t_k \leq T_1}\mathbb{E}\|\overline{X}_{t_k}\|^2\big) + 1\Big) \Delta\\
          &\quad+ 8a_1\Big(T_2-T_1+4\Big)\int_{T_1}^{T_2}\mathbb{E}\Big(\sup \limits_{T_1 \leq u \leq s}|y(u)-\overline{X}(u)|^2\Big)ds. 
     \end{align*}
     Applying the Gronwall inequality yields
     \begin{align}\label{L18-9}
           &\quad~\mathbb{E}\Big(\sup \limits_{T_1 -\tau\leq s \leq T_2}|y(s)-\overline{X}(s)|^2\Big) \\ \notag
           & =\mathbb{E}\Big(\sup \limits_{T_1\leq s \leq T_2}|y(s)-\overline{X}(s)|^2\Big) \\ \notag
          & \leq \Big(2(T_2-T_1)(2a_1(C_6^{T}+K_1)(T_2-T_1)+8a_1(C_7^{T_2-T_1}+K_1))\Big)e^{8a_1(T_2-T_1+4)(T_2-T_1)} \\ \notag
          & \quad \times \Big (\big(\sup \limits_{(T_1-\tau) \vee 0 \leq t_k \leq T_1}\mathbb{E}\|\overline{X}_{t_k}\|^2\big) + 1\Big) \Delta 
     \end{align}
   By the flow property of $\overline{X}(s)$, it not difficult to see that for any $t \in [T_1, T_2]$
     \begin{align}\label{L18-10}
         \mathbb{E}\Big(\|x_t^{T_1,\overline{X}_{T_1}^{0,\xi}}-\overline{X}_t^{T_1,\overline{X}_{T_1}^{T_1,\xi}}\|^2\Big)&\leq\mathbb{E}\Big(\sup \limits_{T_1-\tau \leq s \leq T_1}|x(s; T_1, \overline{X}_{T_1}^{0,\xi})-\overline{X}(s; T_1, \overline{X}_{T_1}^{0,\xi})|^2\Big) \\ \notag
         &= \mathbb{E}\Big(\sup \limits_{T_1-\tau \leq s \leq T_2}|y(s)-\overline{X}(s)|^2\Big).
     \end{align}
     Inserting \eqref{L18-9} into \eqref{L18-10} and letting
     \begin{align*}
         C^{T_2-T_1}_8:=\Big(2(T_2-T_1)(2a_1(C_6^{T}+K_1)(T_2-T_1)+8a_1(C_7^{T_2-T_1}+K_1))\Big)e^{8a_1(T_2-T_1+4)(T_2-T_1)}.
     \end{align*}
     The required assertion follows.
     \end{proof}

From Lemmas \ref{L1}, \ref{L-6}, and \ref{L11}, we can see that Condition \ref{con19} hold, then by Theorems \ref{T-22} and \ref{T-9}, we can conclude this part with the following theorems.
\begin{theorem}\label{T-27}
    Under Assumption \ref{as1}-\ref{as4}, the segment process $\{\overline{X}_{t_k,\xi}\}_{k \geq 0}$ converges strongly to the true segment process $\{x_{t_k}^{0,\xi}\}$ in the infinite horizon.
\end{theorem}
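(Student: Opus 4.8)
The plan is to verify that the continuous backward Euler--Maruyama scheme $\{\overline{X}(t;0,\xi)\}_{t\geq-\tau}$, together with the true solution of \eqref{SDDE}, satisfies all four items of Condition \ref{con19} with $p=2$ and $q=1$, and then to invoke Theorem \ref{T-22} directly. Since the preconditions have already been established piecewise in the preceding lemmas, the work here is essentially one of assembly and of matching the exact forms, rather than fresh estimation.

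First I would identify the numerical segment process $\{X_{t_k}^{0,\xi}\}$ appearing in Condition \ref{con19} with $\{\overline{X}_{t_k}^{0,\xi}\}$. Condition \ref{con19}(i) is then exactly the content of Lemma \ref{L--5}, which holds under Assumption \ref{as4} and guarantees that $\{\overline{X}_{t_k}\}_{k\geq 0}$ is a time-homogeneous Markov chain. Condition \ref{con19}(ii), the moment boundedness of the true segment process in the form $\mathbb{E}\|x_t^{0,\xi}\|^2 \leq K_1 \mathbb{E}\|\xi\|^2 e^{-K_2 t} + K_3$, is supplied by Lemma \ref{L1} under Assumptions \ref{as1}--\ref{as2}, and Condition \ref{con19}(iii), the asymptotic attraction $\mathbb{E}\|x_t^{0,\xi}-x_t^{0,\eta}\|^2 \leq K_4 \mathbb{E}\|\xi-\eta\|^2 e^{-K_5 t}$, is provided by Lemma \ref{L-6} under the same assumptions.

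The one step requiring care is Condition \ref{con19}(iv). Lemma \ref{L11} yields the bound $\sup_{T_1\leq t_k\leq T_2}\mathbb{E}\|x_{t_k}^{T_1,\overline{X}_{T_1}}-\overline{X}_{t_k}^{0,\xi}\|^2 \leq C_8^{T_2-T_1}(\sup_{T_1-\tau\leq t_k\leq T_1}\mathbb{E}\|\overline{X}_{t_k}^{0,\xi}\|^2+1)\Delta$, which matches the required shape with $p=2$, $q=1$ and $C_{T_2-T_1}=C_8^{T_2-T_1}$. To close the identification I would appeal to the time-homogeneity and flow property of the Markov chain from Lemma \ref{L--5} to rewrite $\overline{X}_{t_k}^{0,\xi}$ as $\overline{X}_{t_k}^{T_1,\overline{X}_{T_1}^{0,\xi}}$ for $t_k\geq T_1$, so that the error term inside Lemma \ref{L11} agrees with $\|x_{t_k}^{T_1,X_{T_1}^{0,\xi}}-X_{t_k}^{T_1,X_{T_1}^{0,\xi}}\|^2$ as written in Condition \ref{con19}(iv), subject also to the normalization convention $\sup_{-\tau\leq t_k\leq 0}\mathbb{E}\|X_{t_k}^{0,\xi}\|^2:=\mathbb{E}\|\xi\|^2$.

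With all four preconditions verified, Lemma \ref{L13} first delivers the uniform moment bound $\mathbb{E}\|\overline{X}_{t_k}^{0,\xi}\|^2 \leq K_6\mathbb{E}\|\xi\|^2+K_7$ for all $\Delta$ below some $\Delta^*$, and Theorem \ref{T-22} then applies verbatim to give $\sup_{k\in\mathbb{N}}\mathbb{E}\|x_{t_k}^{0,\xi}-\overline{X}_{t_k}^{0,\xi}\|^2 \leq C\Delta$ with $C$ independent of time, which is precisely the claimed infinite-horizon strong convergence. I expect the main (though modest) obstacle to be the bookkeeping in Condition \ref{con19}(iv): ensuring that the flow-property rewriting and the initial-segment normalization are consistent across the two notational systems, and confirming that the rate $\Delta^{q}$ with $q=1$ coming from Lemma \ref{L11} survives intact the telescoping over the countably many intervals of length $T$ inside the proof of Theorem \ref{T-22}, where the decaying part controlled by $K_5$ must dominate the accumulation of the bounded per-interval errors.
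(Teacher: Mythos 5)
Your proposal matches the paper's argument exactly: the paper concludes Theorem \ref{T-27} by observing that Lemmas \ref{L--5}, \ref{L1}, \ref{L-6} and \ref{L11} verify the four items of Condition \ref{con19} (with $p=2$, $q=1$) and then invoking Theorem \ref{T-22}, which is precisely your assembly. Your extra care about the flow-property rewriting in Condition \ref{con19}(iv) and the initial-segment normalization is a correct reading of details the paper leaves implicit.
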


\begin{theorem}\label{T-28}
     Under Assumption \ref{as1}-\ref{as4}, the  probability measure of $\{\overline{X}_{t_k,\xi}\}_{k \geq 0}$ converges to the invariant measure $\pi$ of the segment process $\{x_{t}^{0,\xi}\}_{t \geq 0}$ 
\end{theorem}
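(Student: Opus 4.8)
The plan is to treat Theorem \ref{T-28} as a direct corollary of the abstract result Theorem \ref{T-9}, specialized to the backward Euler--Maruyama (BEM) scheme. The only genuine task is therefore to confirm that the BEM scheme, together with the SDDE \eqref{SDDE}, satisfies all four items of Condition \ref{con19} with $p=2$; once this is done, Theorem \ref{T-9} applies verbatim. I would assemble the verification from the lemmas already proved in this section: item (i), the time-homogeneous Markov property of $\{\overline{X}_{t_k}^{0,\xi}\}_{k\geq 0}$, is Lemma \ref{L--5}; item (ii), the exponentially-decaying moment bound $\mathbb{E}\|x_t^{0,\xi}\|^2 \leq C_1 \mathbb{E}\|\xi\|^2 e^{-\lambda t}+C_2$ for the true segment process, is Lemma \ref{L1} (with $K_1=C_1$, $K_2=\lambda$, $K_3=C_2$); item (iii), the asymptotic attraction of two true segment processes started from different data, is Lemma \ref{L-6}; and item (iv), the finite-time (on $[T_1,T_2]$) strong convergence estimate with rate $\Delta^{1}$, is Lemma \ref{L11}. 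Collecting these, Condition \ref{con19} holds.

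Next I would recall that, under items (ii) and (iii) of Condition \ref{con19} (that is, Lemmas \ref{L1} and \ref{L-6}), the true segment process $\{x_t^{0,\xi}\}_{t\geq 0}$ admits a unique invariant measure $\pi$, by the argument of \cite[Theorem 3.1]{YZM2003} already invoked above. This is exactly the standing hypothesis needed to run Theorem \ref{T-9}. Its conclusion, namely $\lim_{k\to\infty,\,\Delta\to 0} d_{\mathbb{L}}(\overline{\mathbb{P}}_{t_k}(\xi,\cdot),\pi(\cdot))=0$ for every $\xi \in C_{\mathcal F_0}^{2}$, is precisely the assertion of Theorem \ref{T-28}.

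Because every analytic ingredient has already been established, I do not anticipate a substantive obstacle; the proof is essentially bookkeeping. The one point that merits a line of care---and which is the conceptual heart borrowed from the proof of Theorem \ref{T-9}---is the interplay of the two limits. The triangle inequality splits $d_{\mathbb{L}}(\overline{\mathbb{P}}_{t_k}(\xi,\cdot),\pi(\cdot))$ into a term controlled by the long-time convergence of the true law to $\pi$ (small once $t_k$ is large) and a term $d_{\mathbb{L}}(\mathbb{P}_{t_k}(\xi,\cdot),\overline{\mathbb{P}}_{t_k}(\xi,\cdot)) \leq \mathbb{E}\big(2\wedge\|x_{t_k}^{0,\xi}-\overline{X}_{t_k}^{0,\xi}\|\big)$, which is controlled, via H\"older's inequality and the infinite-horizon strong convergence of Theorem \ref{T-27}, by a bound of order $\Delta^{1/2}$ that is uniform in $k$ (small once $\Delta$ is small). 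Matching the joint limit $k\to\infty,\ \Delta\to 0$ thus requires only this uniformity in $k$, which Theorem \ref{T-27} guarantees, so the two estimates combine to give the claim.
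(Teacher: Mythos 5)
Your proposal is correct and follows essentially the same route as the paper: the paper likewise disposes of Theorem \ref{T-28} by noting that Lemmas \ref{L--5}, \ref{L1}, \ref{L-6} and \ref{L11} verify the four items of Condition \ref{con19} and then invoking Theorem \ref{T-9} (whose proof is exactly the triangle-inequality/H\"older argument you sketch at the end). Your write-up is in fact slightly more explicit than the paper's one-line justification, but there is no substantive difference.
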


\section{Numerical examples}
In this section, some examples will be presented in order to verify the theories.
\begin{example}
    Consider the following SDDE
   \begin{align}
    \begin{split}
  \left \{
 \begin{array}{ll}
dx(t)=\big(-4x(t)+x(t-1)\big)dt + \big(x(t)+x(t-1)+1\big)dW(t), \quad \quad t >0\\
 x(t)=cos(W(t)), \quad \quad t\in [-1,0].
 \end{array}
 \right.
 \end{split}
 \end{align}
 It is easy to check that Assumptions \ref{as1}, \ref{as2} and \ref{as3} hold with $a_1=32, b_1=5, b_2=3, b_3=4.75, b_4=3.25, b_5=9 $ and $K_1 = 1$.

 We approximate the expectation by averaging over 500 paths, taking the numerical solution with the stepsize $\Delta_1=0.0001$ as the true solution $x(t)$ and the stepsize $\Delta_2=0.01$ as the numerical solution $z(t)$, and using Newton iteration to solve the proposed implicit scheme. At each time point $t_k=k\Delta_2$, we also approximate $e_{strong}(t_k):=\mathbb{E}(\|x(t_k)-z(t_k)\|^2)$ by taking the average of $\max_{~0\leq i \leq 100} |x(t_{k-i})-z(t_{k-i})|^2$ across 500 paths. The left figure of Figure \ref{fig5} shows the change of the ratio of error and stepsize $e_{strong}/\Delta_2^2$ over the time interval $[0,100]$. It can be seen that the curve does not rise with the increase of time, but fluctuates within a certain range, which is consistent with the theoretical results.

 And by the definition of the segment process, we know that it's a functionally valued random variable. The distribution of the segment process is composed of the
distribution of an infinite and uncountable number of points. We cannot perform numerical experiments on it, and can only study the distribution of finitely countable points on each interval. Therefore, since $\tau=1$, on each interval $[k, k + 1]$, we only discuss the distribution at $t=k$. Similar to before, we take the probability distribution at $t=100$ of the numerical solution with stepsize $\Delta_1=0.0001$ as the invariant measure of the true solution. At each time point, the probability distribution is approximated by an empirical distribution of $100$ sample points. As shown in Figure \ref{estrong}, on the left we use the Kolmogorov-Smirnov test (K-S test) to measure the difference between the empirical distribution and the invariant measure at each time point. As shown in the figure, the difference tends to be stable, indicating that there is an invariant measure of the true solution. On the right, we draw the empirical distribution function at $t=10,95,96,100$. It is easy to see that the difference between the empirical distributions becomes smaller and smaller as time increases. This is also consistent with the theoretical results.

Finally, due to the close relationship between the invariant measure and ergodicity, we also verify the ergodicity of numerical solutions. As shown in Figure \ref{ ergodicity}, we select functions $f(x)=x^3$ and $f(x)=e^{-x}$, and start from three different initial values $3+cos(W(t))$, $-1+t$, and $-2$. The curves depicting the changes in their time averages $\mathbb{E}(f(z(t)))$ converge together, thus verifying the existence of ergodicity
\end{example}

\begin{example}
    For the truncated EM method, consider another scalar SDDE
    \begin{align}
    \begin{split}
  \left \{
 \begin{array}{ll}
dx(t)=\big(-2x(t)-10 x^3(t)+x(t-1)\big)dt + \big(1+0.5x^2(t)\big)dW(t), \quad \quad t >0\\
 x(t)=cos(W(t)), \quad \quad t\in [-1,0].
 \end{array}
 \right.
 \end{split}
 \end{align}
 \begin{figure}[htp!]
    \centering
    \includegraphics[width=0.49\textwidth]{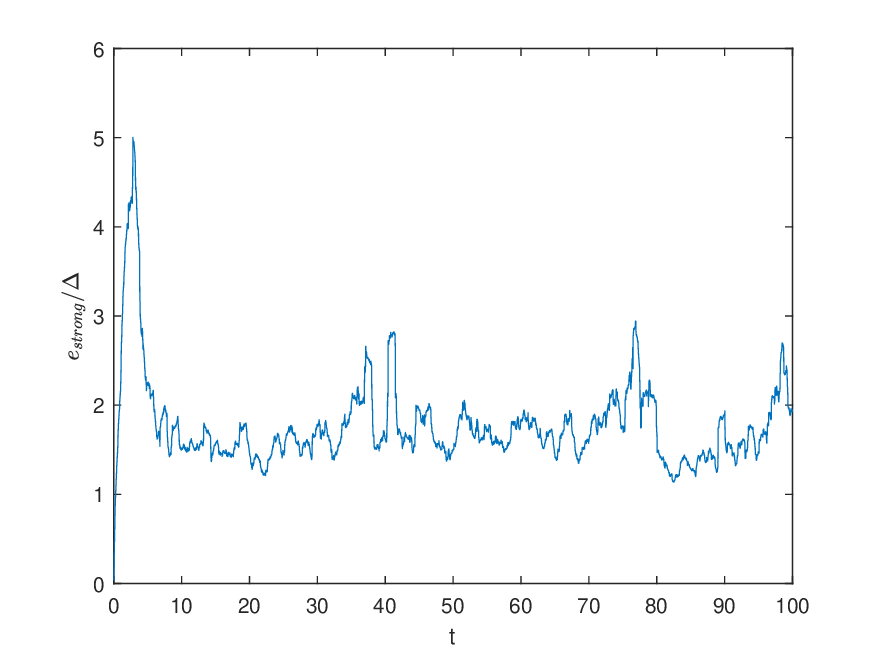}
 \includegraphics[width=0.49\textwidth]{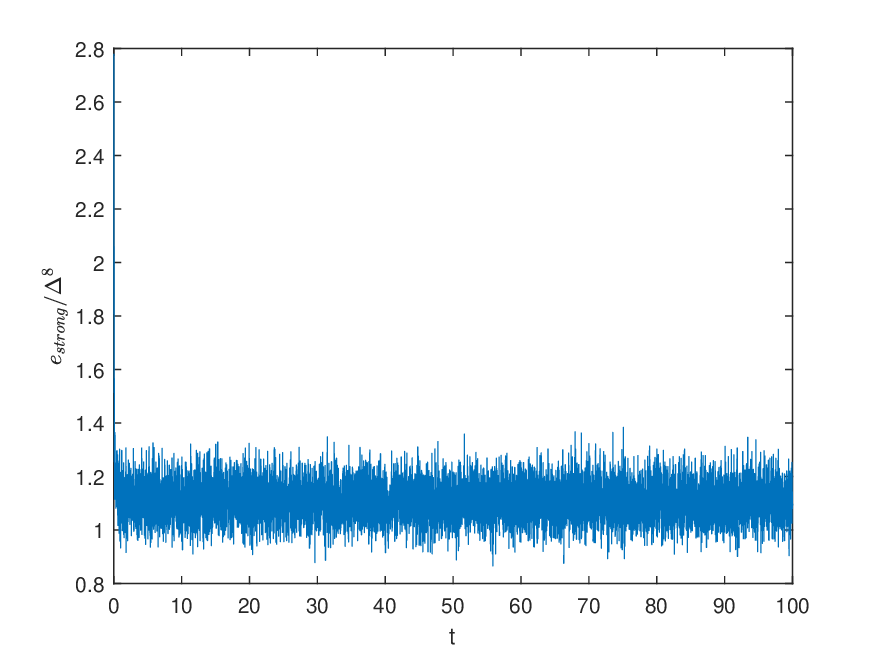}
   \caption{Left: The BEM method; \quad\quad~\quad\quad\quad\quad\quad\quad \quad \quad\quad Right:The TEM method;}
    \label{fig5}
\end{figure}

\begin{figure}[htp!]
    \centering
    \includegraphics[width=0.49\textwidth]{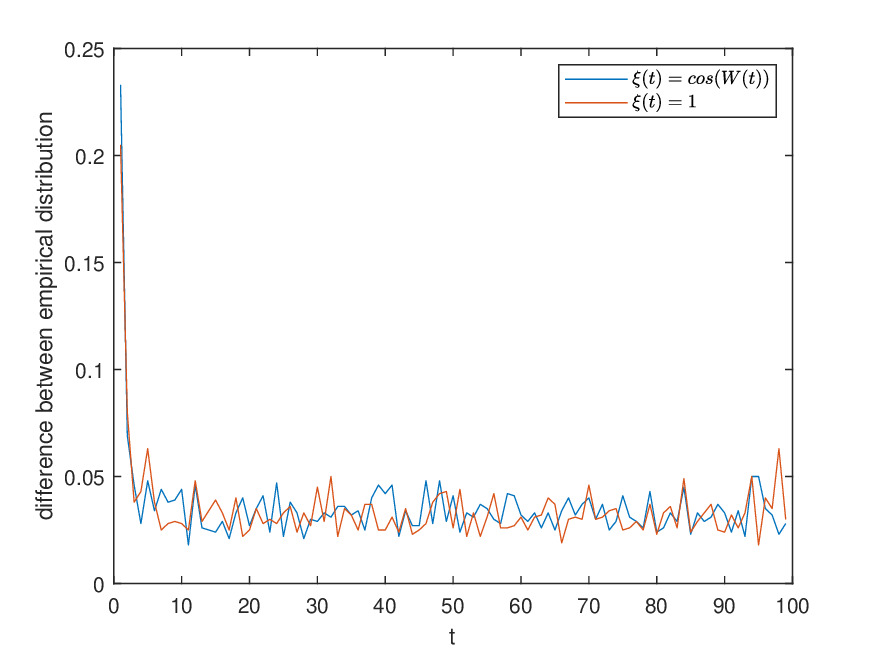}
 \includegraphics[width=0.49\textwidth]{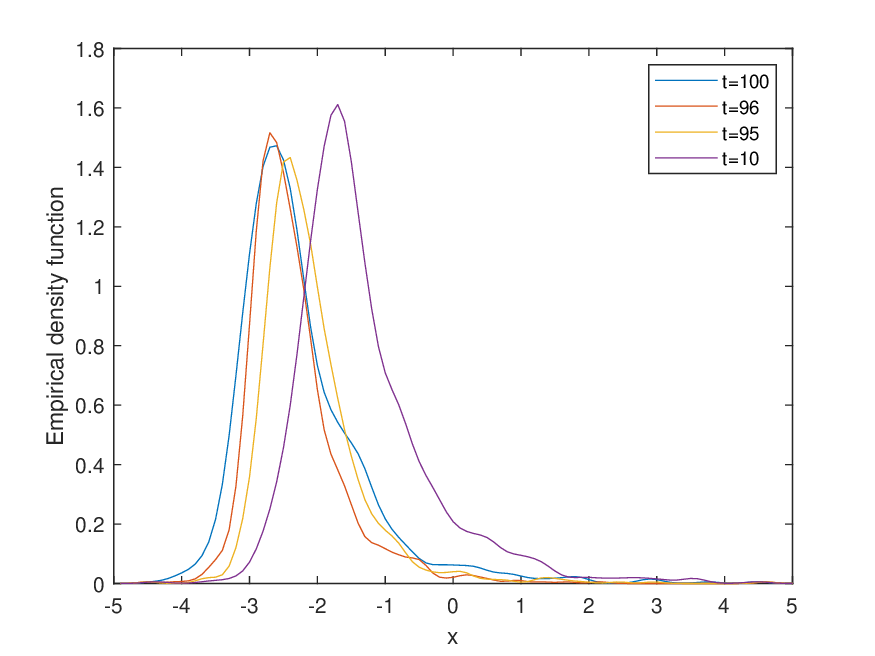}
  \caption{Left:Difference between empirical distribution; ~\quad\quad Right:Empirical density function;}
    \label{estrong}
\end{figure}

\begin{figure}[htp!]
    \centering
    \includegraphics[width=0.49\textwidth]{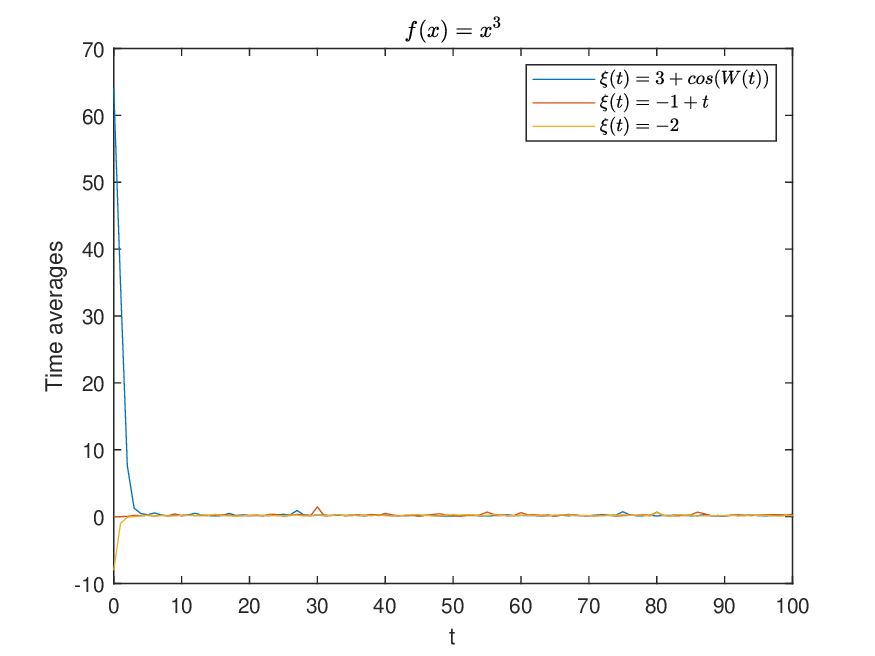}
 \includegraphics[width=0.49\textwidth]{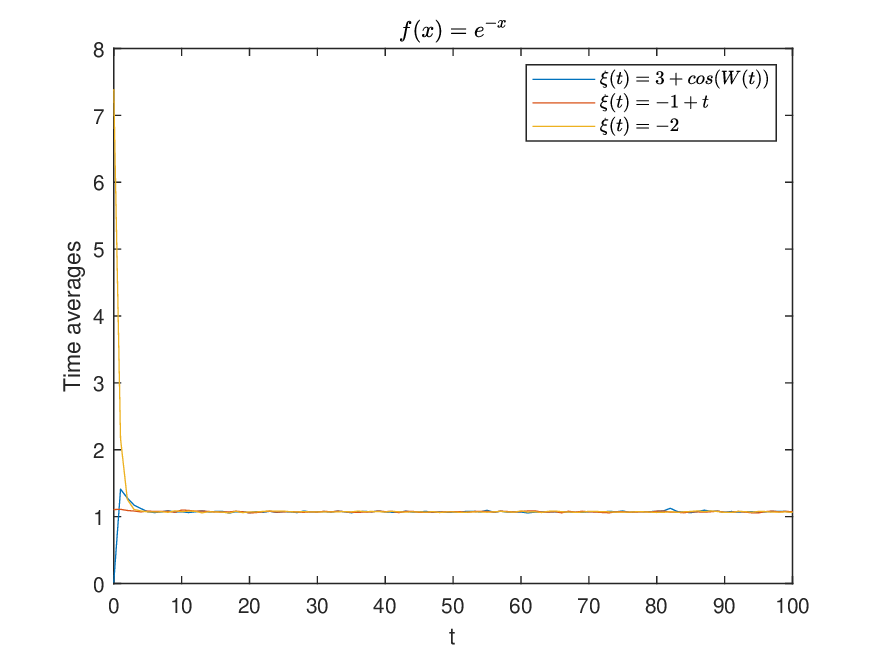}
  \caption{Sample means of $\mathbb{E}(|f(z(k))|)$ $(k\in \mathbb{N})$ with different initial data $\xi$}
    \label{ ergodicity}
\end{figure}
 For any $x,~y,~\overline{x},~\overline{y} \in \mathbb{R}^d$
 \begin{align*}
     &|f(x,y)- f(\overline{x},\overline{y})| \leq 20 (|x-\overline{x}|+ |y-\overline{y}|)(1+|x|^2+|\overline{x}|^2+ |y|^2+ |\overline{y}|^2), \\
    & |g(x,y)-g(\overline{x},\overline{y})|^2 \leq |x-\overline{x}|(|x|^2+|\overline{x}|^2),
 \end{align*}
 and
 \begin{align*}
     &\quad ~\Big(1+|x|^2\Big)^{7}\Big(\langle 2x, f(x,y) \rangle + 15|g(x,y)|^2\Big) \leq 2^7(4-12|x|^{18}+ |y|^{18}), \\
    & \quad \quad \quad \quad \quad\langle 2x, f(x,y) \rangle + 15|g(x,y)|^2 \leq 30-3x^2+y^2 \\ \notag
    & 2\langle x-\overline{x}, f(x,y)-f(\overline{x}, \overline{y}) \rangle  +2|g(x,y)-g(\overline{x},\overline{y})|^2 \leq -3|x-\overline{x}|^2+|y-\overline{y}|^2.
 \end{align*}
 Moreover, for any $s, ~t \in [-1,~0]$,
 \begin{align*}
     \mathbb{E}|cos(W(t))-cos (W(s))|^p \leq \mathbb{E}|W(t)-W(s)|^p \leq K_p (t-s)^{\frac{p}{2}},
 \end{align*}
 where $K_p$ is a positive constant dependent on $p$. It is not difficult to see that Assumptions \ref{as1-1}, \ref{as1-2}, \ref{as1-3} and \ref{as1-4} hold with $a_1=20, q=2, p=16, b_1=512, b_2=1536, b_3=128, b_4=3, b_5=1, \overline{b}_1=30, \overline{b}_2=3$ and $\overline{b}_3=1$.
Then by \eqref{PHI-1}, we take $\phi(r) =421+12\sqrt{20}+ (1568+48\sqrt{20})r^4$, for $r \geq 1$. And by \eqref{PI}, we have $\nu= 2/7$ and $K=120$.

From our previous analysis, we know that the truncated EM numerical solution is strongly convergent in the infinite horizon. Thus, we use the truncated EM method to simulate 1000 sample paths with the stepsize $\Delta_1 =0.001$ as the true solution $x(t)$ and the stepsize $\Delta_2=0.01$ as the numerical solution $z(t)$. 

As can be seen from the right figure of Figure \ref{fig5}, similar to the left figure, the curve corresponding to the ratio of error and stepsize, denoted as $e_{strong}/\Delta^8$ with $e_{strong}(t):=\mathbb{E}(\|x(t)-z(t)\|^{16})$, fluctuates within a certain range, consistent with the theoretical results.

\end{example}

\appendix
\section{Proof of Theorem \ref{T--2}}

\begin{proof}
    First, fixed $T:=2\tau+ (2\log(2M_1))/M_2$. Due to the fourth condition of Condition \ref{con1}, we know that for any $t \in [0, 2T]$
    \begin{align}\label{T22-1}
        \mathbb{E}\Big(x(t;0,\xi)-X(t;0,\xi)\Big) \leq C_T^1\Big(1+\sup \limits_{-\tau\leq t \leq 0}\mathbb{E}(|X(t;0,\xi)|^r)\Big)\Delta^q \leq C_T^1\Big(1+\mathbb{E}\|\xi\|^r\Big)\Delta^q,
    \end{align}
where $C_T^1$ is a positive constant dependent on $2T$. As for any $t_k \in [T, 3T]$, recalling that $\{x_{t_k}\}_{k \in \mathbb{N}}$ is a time-homogeneous Markov chain, and $T$ is $N$ times of $\Delta$, thus, for any $B \in \mathfrak{B}(C)$, $\xi \in C$, and $i \in \mathbb{N}_+$
  \begin{align}
      \mathbb{P}\big(x_{t_{k+i}} \in B ~\big|~x_{t_k} = \xi  \big)= \mathbb{P}\big(~x_{t_i} \in B ~\big|~x_{0}= \xi \big),
  \end{align}
  This implies that for any Borel set $\widehat{B} \in \mathbb{R}^d$ and $t \in [T, 3T]$,
\begin{align*}
    \mathbb{P}\big(x(t) \in \widehat{B}~\big| ~x_{T} = \xi  \big)= \mathbb{P}\big(x(t-T) \in \widehat{B}~\big| ~x_{0} = \xi  \big)
\end{align*}
Meanwhile, note that both the true and numerical solutions have the following flow property as, e.g., in \cite{Mao2007} or \cite[p.78]{MY2006} 
\begin{align}
    x(t;0,\xi)= x(t;s,x_s^{0,\xi}),\quad X(t;0,\xi)= X(t;s,X_s^{0,\xi}) \quad \quad \forall~ 0 \leq s < t < \infty,
\end{align}
 provided $s$ is the multiple of $\Delta$. Thus, by using the second and fourth conditions of Condition \ref{con1} along with the Young inequality, it is not difficult to see that for any $t \in [T, 3T]$
\begin{align}
    &\quad\mathbb{E}\Big(|x(t;0,\xi)-X(t;0,\xi)|^p\Big) \\ \notag
    & \leq 2^p \mathbb{E}\Big(|x(t;0,\xi)-x(t;T, X_T^{0,\xi})|^p\Big)+ 2^p\mathbb{E}\Big(|x(t;T,X_T^{0,\xi})-X(t;0, \xi)|^p\Big) \\ \notag
    & = 2^p\mathbb{E}\Big(|x(t;T, x_T^{0,\xi})-x(t;T, X_T^{0,\xi})|^p\Big)+2^p\mathbb{E}\Big(|x(t;T,X_T^{0,\xi})-X(t;T,X_T^{0,\xi})|^p\Big) \\ \notag
    & =2^p \mathbb{E}\Big(|x(t-T;0,x_T^{0,\xi})-x(t-T;0,X_T^{0,\xi})|^p\Big) +2^p\mathbb{E}\Big(|x(t;T,X_T^{0,\xi})-X(t;T,X_T^{0,\xi})|^p\Big) \\ \notag
    &\leq 2^p M_2\Big(\sup \limits_{-\tau\leq s \leq 0}\mathbb{E}(|x(T-s;0,\xi)-X(T-s;0,\xi)|^p)\Big)+ 2^p\mathbb{E}\Big(|x(t;T,X_T^{0,\xi})-X(t;T,X_T^{0,\xi})|^p\Big) \\ \notag
    &= 2^p M_2 \Big(\sup \limits_{T-\tau \leq s \leq T}\mathbb{E}(|x(t;0,\xi)-X(t;0,\xi)|^p)\Big)e^{-M_3(t-T)}+ 2^p C_T^1\Big(1+ \sup\limits_{T-\tau\leq t \leq T }\mathbb{E}(|X(t;0,\xi)|^r)\Big)\Delta^q.
\end{align}

From this, we can derive that for any $t_k \in [2T,3T]$
\begin{align}\label{T22-2}
      &\quad\mathbb{E}\Big(|x(t;0,\xi)-X(t;0,\xi)|^p\Big) \\ \notag
       &\leq 2^p M_2\Big(\sup \limits_{T-\tau \leq s \leq T}\mathbb{E}(|x(t;0,\xi)-X(t;0,\xi)|^p)\Big)e^{-M_3T}+ 2^p C_T^1\Big(1+ \sup\limits_{T-\tau\leq t \leq T }\mathbb{E}(|X(t;0,\xi)|^r)\Big)\Delta^q.
\end{align}
By the definition of $T$, we have
\begin{align}
    2^p M_2e^{-M_3T} \leq e^{\frac{-M_3T}{2}}
\end{align}
Thus, combining this with the second condition of Condition \ref{con1} and \eqref{T22-2} yields 
\begin{align}\label{T22-3}
    \mathbb{E}\Big(|x(t;0,\xi)-X(t;0,\xi)|^p\Big)\leq \Big(\sup \limits_{T-\tau \leq s \leq T}\mathbb{E}(|x(t;0,\xi)-X(t;0,\xi)|^p)\Big)e^{\frac{-M_3T}{2}}+ K_T\Delta^q,
\end{align}
where $K_{T}:=2 C_{T}^1(1+M_1)$. Then for any $t \in [3T, 4T]$, we can repeat the derivation of \eqref{T22-3} such that
\begin{align}\label{T22-4}
   &\quad\mathbb{E}\Big(|x(t;0,\xi)-X(t;0,\xi)|^p\Big) \\ \notag
   &= 2^p\mathbb{E}\Big(|x(t;0,\xi)-x(t;2T,X_{2T}^{0,\xi})|^p\Big)+ 2^p\mathbb{E}\Big(|x(t;2T,X_{2T}^{0,\xi})-X(t;0, \xi)|^p\Big) \\ \notag
    & = 2^p\mathbb{E}\Big(|x(t;2T, x_{2T}^{0,\xi})-x(t;2T, X_{2T}^{0,\xi})|^p\Big)+2^p\mathbb{E}\Big(|x(t;2T,X_{2T}^{0,\xi})-X(t;2T,X_{2T}^{0,\xi})|^p\Big) \\ \notag
    &\leq 2^p M_2 \Big(\sup \limits_{2T-\tau \leq s \leq 2T}\mathbb{E}(|x(t;0,\xi)-X(t;0,\xi)|^p)\Big)e^{-M_3(t-2T)}+ 2^p C_T^1\Big(1+ \sup\limits_{2T-\tau\leq t \leq 2T }\mathbb{E}(|X(t;0,\xi)|^r)\Big)\Delta^q \\ \notag
    & \leq \Big(\sup \limits_{2T-\tau \leq s \leq 2T}\mathbb{E}(|x(t;0,\xi)-X(t;0,\xi)|^p)\Big)e^{\frac{-M_3T}{2}}+ K_T\Delta^q.
\end{align}

Continuing this approach, for any $t_k \geq 0$, choose $l \in \mathbb{N}_{+}$ such that $t_k \in [(l+1)T, (l+2)T]$, then we have
\begin{align}\label{T22-5}
  &\quad  \mathbb{E}\Big(|x(t;0,\xi)-X(t;0,\xi)|^p\Big)\\ \notag
    &\leq \Big(\sup \limits_{lT-\tau \leq s \leq lT}\mathbb{E}(|x(t;0,\xi)-X(t;0,\xi)|^p)\Big)e^{\frac{-M_3T}{2}}+ K_T\Delta^q \\ \notag
   & \leq \Big(\sup \limits_{(l-2)T-\tau \leq s \leq (l-2)T}\mathbb{E}(|x(t;0,\xi)-X(t;0,\xi)|^p)\Big)e^{-M_3T}+ K_T\Big(1+e^{\frac{-M_3T}{2}}\Big)\Delta^q \\ \notag
    &\quad \vdots \\ \notag
     &\leq \Big(\sup \limits_{0 \leq s \leq 2T}\mathbb{E}(|x(t;0,\xi)-X(t;0,\xi)|^p)\Big)e^{-\frac{\lfloor 0.5(l+1)\rfloor M_3T}{2}} + K_T\Big(\sum \limits_{j=0}^{\lfloor 0.5(l+1)-1\rfloor}e^{-\frac{jM_3T}{2}}\Big)\Delta^q.
\end{align}

Obviously,
\begin{align}\label{T22-6}
     \sum \limits_{j=0}^{\infty}e^{-\frac{jM_3T}{2}} \leq \frac{e^{\frac{M_3T}{2}}}{e^{\frac{M_3T}{2}}-1}
\end{align}

Thus, combining \eqref{T22-1}, \eqref{T22-5} and \eqref{T22-6} together yields
\begin{align}
     \mathbb{E}\Big(|x(t;0,\xi)-X(t;0,\xi)|^p\Big)\leq \Big(C_T^1(1+\mathbb{E}(\|\xi\|^r))+ K_T\frac{e^{\frac{M_3T}{2}}}{e^{\frac{M_3T}{2}}-1}\Big)\Delta^q.
\end{align}

Since $\xi \in \mathcal C_{\mathcal F_0}^{p}$, let 
\begin{align*}
    C:=C_T^1(1+\mathbb{E}(\|\xi\|^r))+ K_T\frac{e^{\frac{M_3T}{2}}}{e^{\frac{M_3T}{2}}-1}.
\end{align*}
The required assertion follows.
\end{proof}

\section{Proof of Lemma \ref{L13} }
\begin{proof}
    First, fixed $T = 4\tau + (4\log (2K_{{1}}))/K_{{2}}$. Then, by the elementary inequality $|a+b|^2 \leq 2(a^2+b^2)$, along with the third and fourth conditions of Condition \ref{con19}, we can see that for any $t_k \in [0, 2T]$
    \begin{align}\label{L-16-1}
        \mathbb{E} \|X_{t_k}^{0,\xi}\|^2
        &\leq 2 \mathbb{E}\Big(\|X_{t_k}^{0,\xi}-x_{t_k}^{0,\xi}\|^2\Big)+2 \mathbb{E} \|x_{t_k}^{0,\xi} \|^2  \\ \notag
        &\leq 2C_T^1(1+\mathbb{E}\|\xi\|^2)\Delta^q+2 K_{{1}}\mathbb{E} \|\xi \|^2 e^{-K_{{2}} t_k} +2K_{{3}} \\ \notag
        &\leq 2 K_{{1}}\mathbb{E} \|\xi \|^2 e^{-K_{{2}} t_k} +2C_T^1\mathbb{E}\|\xi\|^2\Delta^q+ 2\Big(K_{{3}}+C_{T}^1\Delta^q\Big).
    \end{align}
   where $C_T^2$ is a positive constant dependent on $2T$. Next, similar to the proof of Theorem \ref{T--2}, for any $t_k \in [T,3T]$, by applying the first, third and fourth conditions of Condition \ref{con19}, along with the flow property, we can also derive that
    \begin{align*}
         \mathbb{E} \|x_{t_k}^{T, X_{T}^{0,\xi}} \|^2 &\leq K_{{1}} \mathbb{E} \|X_{T}^{0,\xi} \|^2 e^{-K_{{2}} (t_k-T)} + K_{{3}} , 
    \end{align*}
    and
    \begin{align*}
         \mathbb{E}\Big(\|X_{t_k}^{0,\xi}-x_{t_k}^{T, X_{T}^{0,\xi}}\|^2\Big)=\mathbb{E}\Big(\|X_{t_k}^{T, X_{T}^{0,\xi}}-x_{t_k}^{T, X_{T}^{0,\xi}}\|^2\Big) \leq  C_T^2 \Big(\sup \limits_{ T-\tau\leq t_k \leq T}\mathbb{E}\|X_{t_k}^{0,\xi}\|^2 +1\Big)\Delta^q.
    \end{align*}
   Thus, for any $t_k \in [T, 3T]$
    \begin{align}\label{L-16-2}
        \mathbb{E} \|X_{t_k}^{0,\xi}\|^2 &\leq 2  \mathbb{E}\Big(\|X_{t_k}^{0,\xi}-x_{t_k}^{T, X_{T}^{0,\xi}}\|^2\Big)+2  \mathbb{E} \|x_{t_k}^{X_{T}^{0,\xi}} \|^2  \\ \notag
        &\leq 2K_{{1}}e^{-K_{{2}} (t_k-T)}\mathbb{E} \|X_{T}^{0,\xi} \|^2+2  C_T^2\Delta^q\Big(\sup \limits_{ T-\tau\leq t_k \leq T}\mathbb{E}\|X_{t_k}^{0,\xi}\|^2 \Big)+ 2\Big(K_{{3}}+ C_T^2\Delta^q\Big).
    \end{align}
  This implies that for any $t_k \in [2T,3T]$
    \begin{align}\label{L-16-3}
        \sup \limits_{2T \leq  t_k \leq 3T} \mathbb{E} \|X_{t_k}^{0,\xi}\|^2 \leq 2\Big(K_{{1}}e^{-K_{{2}} T}+  C_T^2\Delta^q\Big) \Big(\sup \limits_{0 \leq  t_k \leq T} \mathbb{E} \|X_{t_k}^{0,\xi}\|^2 \Big)+ 2\Big(K_{{3}}+ C_T^2\Delta^q\Big).
    \end{align}
    Since $T= 4\tau + (4\log (2K_{{1}}))/K_{{2}}$ and
    \begin{align*}
        \lim \limits_{\Delta \to 0} \Big(2K_{{1}}e^{-K_{{2}} T}+ 2 C_T^2\Delta^q\Big) = \lim \limits_{\Delta \to 0} \Big(e^{-\frac{3K_{{2}}T}{4}} +  C_T^2\Delta^q\Big) =e^{-\frac{3K_{{2}}T}{4}}.
    \end{align*}
   There exists a sufficiently small $\Delta^* \in (0,1)$ such that for any $\Delta \in (0, \Delta^*)$
    \begin{align}\label{L-16-4}
        2\Big(K_{{1}}e^{-K_{{2}} T}+  C_T^2\Delta^q\Big) \leq e^{-\frac{K_{{2}}T}{2}}.
    \end{align}
    Inserting \eqref{L-16-4} into \eqref{L-16-3} yields
    \begin{align*}
          \sup \limits_{2T \leq t_k \leq 3T} \mathbb{E} \|X_{t_k}^{0,\xi}\|^2 &\leq \Big(\sup \limits_{T-\tau \leq t_k \leq T} \mathbb{E} \|X_{t_k}^{0,\xi}\|^2 \Big)e^{-\frac{K_{{2}}T}{2}} + 2\Big(K_{{3}}+ C_T^2\Delta^q\Big)\\ \notag
          &\leq \Big(\sup \limits_{0 \leq t_k \leq T} \mathbb{E} \|X_{t_k}^{0,\xi}\|^2 \Big)e^{-\frac{K_{{2}}T}{2}} + 2\Big(K_{{3}}+ C_T^2\Delta^q\Big).
    \end{align*}
    Then, for any $t_{N+k} \in [3T, 4T]$, since $\{X_{t_k}^{0, \xi}\}_{k \in \mathbb{N}}$ is a homogeneous Markov chain, similar to the previous derivation, we have
    \begin{align*}
         \sup \limits_{3T \leq t_{N+k} \leq 4T} \mathbb{E} \|X_{t_{N+k}}^{0, \xi}\|^2 &=   \sup \limits_{2T \leq t_k \leq 3T} \mathbb{E} \|X_{t_{k}}^{0, X_{t_N}^{0,\xi}}\|^2  \\ \notag
         &\leq e^{-\frac{K_{{2}}T}{2}} \Big( \sup \limits_{0 \leq t_k \leq T} \mathbb{E} \|X_{t_{k}}^{0, X_{t_N}^{0,\xi}}\|^2 \Big)+ 2\Big(K_{{3}}+ C_T^2\Delta^q\Big) \\ \notag
         &= e^{-\frac{K_{{2}}T}{2}} \Big(\sup \limits_{T \leq t_k \leq 2T} \mathbb{E} \|X_{t_k}^{0,\xi}\|^2 \Big)+ 2\Big(K_{{3}}+ C_T^2\Delta^q\Big).
    \end{align*}
   Continuing this approach, it is not difficult to see that for any  $i \in \mathbb{N}_{+}$
    \begin{align}\label{L-16-6}
         \sup \limits_{(i+1))T \leq t_{k} \leq (i+2)T} \mathbb{E} \|X_{t_{k}}^{0,\xi}\|^2
          &\leq \Big(\sup \limits_{(i-1)T \leq t_k \leq iT} \mathbb{E} \|X_{t_k}^{0,\xi}\|^2 \Big)e^{-\frac{K_{{2}}T}{2}} + 2\Big(K_{{3}}+ C_T^2\Delta^q\Big) \\ \notag
          &\quad \vdots \\ \notag
          &\leq \Big(\sup \limits_{ 0\leq t_k \leq 2T} \mathbb{E} \|X_{t_k}^{0,\xi}\|^2 \Big)e^{-\frac{\lfloor0.5(i+1)\rfloor K_{{2}}T}{2}} + 2\Big(K_{{3}}+ C_T^2\Delta^q\Big)\Big(\sum \limits_{j=0}^{\lfloor0.5(i+1)\rfloor} e^{-\frac{j K_{{2}}T}{2}}\Big).
    \end{align}
    Note that 
    \begin{align}\label{L-16-7}
        \sum \limits_{j=0}^{\infty} e^{-\frac{j K_{{2}}T}{2}}\leq e^{\frac{K_{{2}}T}{2}}/(e^{\frac{K_{{2}}T}{2}}-1).
    \end{align}
    Thus, combining \eqref{L-16-1}, \eqref{L-16-6} and \eqref{L-16-7} together and setting
    \begin{align*}
        K_6&:=2K_{{1}}+2C_T^2, \\ \notag
        K_7&:=2 \Big(K_{{3}}+C_T^2\Big)\Big(2+\frac{e^{\frac{K_{{2}}T}{2}}}{e^{\frac{K_{{2}}T}{2}}-1}\Big),
    \end{align*}
    the desired assertion follows.
\end{proof}

\bibliography{References}
\end{document}